\theoremstyle{definition}
\theoremstyle{remark}
\newtheorem{theorem}{Theorem}[section]
\newtheorem{lemma}[theorem]{Lemma}
\newtheorem{example}[theorem]{Example}
\newtheorem{definition}[theorem]{Definition}
\newtheorem{corollary}[theorem]{Corollary}
\newtheorem{remark}[theorem]{Remark}
\numberwithin{equation}{section}
\DeclareMathOperator{\lcm}{lcm}
\DeclareMathOperator{\lm}{lm}
\DeclareMathOperator{\LM}{LM}
\DeclareMathOperator{\lt}{lt}
\DeclareMathOperator{\lti}{\boldsymbol{lt}}
\DeclareMathOperator{\lmi}{\boldsymbol{lm}}
\DeclareMathOperator{\lci}{\boldsymbol{lc}}
\DeclareMathOperator{\lc}{lc}
\DeclareMathOperator{\LT}{LT}
\DeclareMathOperator{\syz}{Syz}
\DeclareMathOperator{\ann}{ann}
\begin{document}
	
\title{Gr\"{o}bner bases and syzygy theorem for direct product of principal ideal rings}

\author{Babak Jabarnejad}
\address{Department of Mathematical sciences, University of Arkansas, Fayetteville, Arkansas, 72701, USA}
\email{babak.jab@gmail.com}

\subjclass[2010]{13P10, 13D02, 13P20}

\date{}
\keywords{Gr\"{o}bner bases, regular sequence, principal ideal domain, syzygy theorem, direct sum, free resolution}
	
\dedicatory{}
	
\begin{abstract}
In this paper we give versions of Hilbert's syzygy theorem for finitely generated modules over polynomial rings over direct product of principal ideal rings.
\end{abstract}
	
\maketitle
	
\section{Introduction}

The concept of Gr\"{o}bner bases for polynomial rings over a field was presented by Buchberger~\cite{buchberger1965algorithmus}. He also gave generalizations of this concept over some rings (e.g.~\cite{buchberger1984critical}). Some generalizations of this concept can be found in~\cite{adams1994introduction}, \cite{yengui2006dynamical}, \cite{kacem2010dynamical}, \cite{hofmann2019gr}, \cite{eder2019efficient}.

In the excellent paper~\cite{gamanda2019syzygy}, Gamanda et.\ al.\ generalize the concept of Gr\"{o}bner bases for B\'ezout rings with divisibility test. They also give versions of Hilbert's syzygy theorem for B\'ezout domains and B\'ezout rings of dimension zero with divisibility test. 

In the present paper, we give a generalization to syzygies on polynomials whose leading coefficients are monomials in a fixed permutable weak regular sequence. We show that length of a free resolution of monomial ideal in permutable weak regular sequence $s_1,\dots,s_k$ is bounded by $k$. Later on as an application we give different versions of Hilbert's syzygy theorem for polynomial rings over principal ideal domains and direct product of principal ideal rings. From the proofs we conclude that if $M\cong F/U$ ($F$ is a free module) and leading coefficients of a Gr\"{o}bner basis of $U$ are nonzero divisors, then the module $M$ admits a finite free resolution. A result that is not proved in~\cite{gamanda2019syzygy}. Note that By \cite[Lemma 10, Corollary 11]{hungerford1968structure}, every principal ideal ring is a finite direct sum of quotients of PIDs. So to prove facts for principal ideal rings it is enough to prove facts for finite direct sum of quotients of PIDs. Also, we know that by Auslander-Buchsbaum-Serre Theorem every finitely generated module over a local ring has a finite free resolution iff the ring is regular. But this theorem does not hold in general for a non-local regular ring. For example if we consider regular ring $\mathbb{Z}\times\mathbb{Z}$ then the ideal $\mathbb{Z}\times\{0\}$ doesn't have a finite free resolution.

The paper~\cite{gamanda2019syzygy} lies in the framework of constructive mathematics, but our paper lies in the framework of classical mathematics. Therefore, we don't assume that rings are equipped with divisibility test. Additionally, we point out that in the present paper we don't need to assume that the direct product of principal ideal rings have dimension zero. As our method of proofs are totally different than methods of proofs in~\cite{gamanda2019syzygy}. They localize B\'ezout rings whereas we directly use the concept of permuatble weak regular sequence.  

In the last section of the paper we generalize the Gr\"{o}bner bases concept for polynomial rings over solvable principal ideal rings. We end the paper with some examples, including an example, where we compute a free resolution on a polynomial ring over a principal ideal ring of dimension 1.  

\section{Gr\"{o}bner bases on finitely generated modules}

In the entirety of this paper $R$ is a commutative ring with unit. We fix $S=R[x_1,...,x_n]$ and finitely generated free $S$-module $F=\oplus_{i=1}^{r}S$ with standard basis $\{e_1,\dots,e_r\}$. Every monomial of $F$ has the form $ue_i$, where $u$ is a monomial of $S$. Assume we have a monomial order $>$ on $F$. If $f\in F$, then we denote the leading term of $f$ by $\LT(f)$. If $\LT(f)=aue_i$, where $a\in R$ and $u$ is a monomial in $S$, then we denote $\lc(f)=a$, $\LM(f)=ue_i$, $\lm(f)=u$ and $\lt(f)=au$. For every $f\neq 0$, we decide $\LT(f)>\LT(0)=0$. For $aue_i$ and $bve_j$, where $a,b\in R$ and $u,v$ are monomials in $S$, we say that $aue_i$ divides $bve_j$ and we write $aue_i|bve_j$ if $au$ divides $bv$ and $i=j$.

\begin{example}
Let $S=R[x_1,x_2,x_3]$ and we have lexicographic order on $S$. Suppose that $F=S\oplus S\oplus S$. Assume $F$ has lexicographic order given priority to the position. If $f=rx_1^3x_2e_1-x_1^4x_2x_3e_3$, then $\LT(f)=rx_1^3x_2e_1$, $\lt(f)=rx_1^3x_2$, $\LM(f)=x_1^3x_2e_1$, $\lc(f)=r$ and $\lm(f)=x_1^3x_2$.
\end{example}

\begin{remark}
In this paper the monomial order on $F$ does not depend on an order on $S$. Let $f\in F$ and $g\in S$. Suppose $\{b_lf_le_j\}$ is the set of all terms of $f$ ($b_l\in R$ and $f_le_j$ is a monomial of $F$) and $\{a_ig_i\}$ is the set of all terms of $g$ ($a_i\in R$ and $g_i$ is a monomial of $S$). Then there is a single $g_{i_0}$ and a single $f_{l_0}e_{j_0}$ such that $g_{i_0}f_{l_0}e_{j_0}$ is maximal among all $g_if_le_j$, because if there are $g_{i_1},g_{i_2}$ and $f_{l_1}e_{j_1},f_{l_2}e_{j_1}$ such that $g_{i_1}f_{l_1}e_{j_1}=g_{i_2}f_{l_2}e_{j_1}$, assume that $f_{l_1}e_{j_1}>f_{l_2}e_{j_1}$, then $g_{i_2}f_{l_1}e_{j_1}>g_{i_2}f_{l_2}e_{j_1}$, so such a $g_{i_1}f_{l_1}e_{j_1}$ cannot be maximum. We denote $\lci(g)=a_{i_0}$, $\lmi(g)=g_{i_0}$ and $\lti(g)=a_{i_0}g_{i_0}$. Actually the monomial order on $F$ induces a monomial order on $S$ and with this monomial order $\lmi(g)$ is the leading monomial of $g$ and also we see that $\LM(f)=f_{l_0}e_{j_0}$. If $R$ is an integral domain, then $\LM(gf)=\lmi(g)\LM(f)$, otherwise we have $\lmi(g)\LM(f)\ge \LM(gf)$.
\end{remark}

\begin{definition}
Let $G=\{f_1,...,f_m\}\subseteq F$. We say $f\in F$ reduces to zero modulo $G$ and denote this by $f \rightarrow_{G} 0$ if there are $p_i\in S$, for $1\le i \le m \ $, such that
$$
f=p_1f_1+...+p_mf_m,\ \text{and} \ \LM(f) \ge \lmi(p_i)\LM(f_i)  
$$
\end{definition}

\begin{definition}
Let $U$ be a submodule of $F$ and $\{f_1,...,f_m\}\subseteq U$. We say $\{f_1,...,f_m\}$ is a Gr\"{o}bner basis for $U$ if $\langle \LT(f_1),...,\LT(f_m)\rangle=\LT(U)$, where $\LT(U)$ is the $S$-module generated by the leading terms of elements of $U$.
\end{definition}

If $R$ is a Noetherian ring and $U$ is a submodule of $F$, then a Gr\"{o}bner basis of $U$ always exists.

\begin{lemma}\label{groebner-generator}
If $U$ is a submodule of $F$ and $G=\{f_1,...,f_m\}$ is a Gr\"{o}bner basis for $U$, then for every $f\in U$, we have $f\to_G0$.
\end{lemma}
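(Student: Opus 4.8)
The plan is to reconstruct the classical division algorithm: starting from $f$, peel off a single leading term expressed honestly as an $S$-combination of the $\LT(f_i)$, and recurse on a remainder whose leading monomial has strictly dropped. Thus I would argue by Noetherian induction on $\LM(f)$, which is legitimate because the monomial order on $F$ is a well-order. The base case $f=0$ is trivial: take $p_1=\cdots=p_m=0$, and then $\LM(f)=0\ge\lmi(p_i)\LM(f_i)$ holds vacuously.

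For the inductive step, let $f\in U$ with $f\neq 0$ and write $\LT(f)=aue_j$ with $a\in R$ and $u$ a monomial of $S$. Since $G$ is a Gr\"{o}bner basis, $\LT(f)\in\LT(U)=\langle\LT(f_1),\dots,\LT(f_m)\rangle$, so $aue_j=\sum_i h_i\LT(f_i)$ for some $h_i\in S$. Writing $\LT(f_i)=a_iu_ie_{j_i}$ and comparing the coefficient of the monomial $ue_j$ on both sides, only indices with $j_i=j$ and $u_i\mid u$ can contribute, and one obtains $a=\sum_i b_ia_i$ for suitable $b_i\in R$ (namely the coefficient of $u/u_i$ in $h_i$), the sum running over those indices. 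Setting $p_i^{(1)}:=b_i\,(u/u_i)$ for those indices and $p_i^{(1)}:=0$ otherwise, a direct check gives $\sum_i p_i^{(1)}\LT(f_i)=\LT(f)$ and $\lmi(p_i^{(1)})\LM(f_i)\le\LM(f)$ for every $i$ (with equality on the contributing indices). Now set $f':=f-\sum_i p_i^{(1)}f_i\in U$; writing $f_i=\LT(f_i)+r_i$ with $\LM(r_i)<\LM(f_i)$ (or $r_i=0$) gives $f'=\bigl(f-\LT(f)\bigr)-\sum_i p_i^{(1)}r_i$, and using the inequality $\lmi(p_i^{(1)})\LM(r_i)\ge\LM(p_i^{(1)}r_i)$ from the Remark together with compatibility of the order with multiplication, each term $p_i^{(1)}r_i$ has leading monomial $<\LM(f)$; since $f-\LT(f)$ does as well, $\LM(f')<\LM(f)$ (or $f'=0$).

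To finish, apply the induction hypothesis to $f'$: there are $p_i'\in S$ with $f'=\sum_i p_i'f_i$ and $\lmi(p_i')\LM(f_i)\le\LM(f')$. Then $f=\sum_i\bigl(p_i'+p_i^{(1)}\bigr)f_i$, and since the leading monomial of a sum is bounded by the larger of the leading monomials of the summands, $\lmi(p_i'+p_i^{(1)})\LM(f_i)\le\max\{\LM(f'),\LM(f)\}=\LM(f)$. This is exactly the condition defining $f\to_G 0$, completing the induction.

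The step I expect to cost the most care is the extraction of the leading term over an \emph{arbitrary} commutative ring $R$: one cannot invoke $\LT(gf)=\lti(g)\LT(f)$, and $\LT(f)$ need not be a single divisibility multiple of one $\LT(f_i)$, so it must be assembled as a genuine $R$-linear combination via the coefficient bookkeeping above. The Remark's inequality $\lmi(g)\LM(f)\ge\LM(gf)$ is precisely what replaces the integral-domain argument and keeps $\LM(f')$ strictly below $\LM(f)$; once that control is secured, termination is immediate from well-foundedness of the monomial order.
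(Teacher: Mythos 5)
Your proof is correct and takes essentially the same route as the paper: since $\LT(f)\in\LT(U)=\langle\LT(f_1),\dots,\LT(f_m)\rangle$, you subtract an element of $\langle f_1,\dots,f_m\rangle$ whose leading term equals $\LT(f)$, observe that the leading monomial strictly drops, and conclude by well-foundedness of the monomial order. If anything, your write-up is a bit more careful than the paper's: the explicit coefficient comparison guarantees $\lmi(p_i^{(1)})\LM(f_i)\le\LM(f)$ for the chosen expression, and the Noetherian induction replaces the paper's informal ``repeat this procedure'' together with its somewhat awkward terminal case $\LT(f-h_1-\dots-h_{k-1})=a\in R$.
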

\begin{proof}
If $f\in U$, then we have an expression $\LT(f)=\sum r_ig_i\LT(f_i)$, where $r_i\in R$ and $g_i$ are monomials in $S$. If we take the element $h_1=\sum r_ig_if_i$, then $h_1\in\langle f_1,\dots,f_m\rangle\subseteq U$ and $\LT(f)=\LT(h_1)\ge\lmi(r_ig_i)\LM(f_i)$, so that $\LM(f)>\LM(f-h_1)$. We repeat this procedure and finally we have $\LT(f-h_1-\dots-h_{k-1})=0$ or $\LT(f-h_1-\dots-h_{k-1})=a\in R$. If $\LT(f-h_1-\dots-h_{k-1})=0$, then $f-h_1-\dots-h_{k-1}=0$, and so $f=h_1+\dots+h_{k-1}$. If $\LT(f-h_1-\dots h_{k-1})=a$, then $f-h_1-\dots-h_{k-1}=a$. But $a$ can be generated by $f_i$. This completes the proof.
\end{proof}

\begin{remark}
From this lemma we conclude that if $U$ is a submodule of $F$ and $G=\{f_1,...,f_m\}$ is a Gr\"{o}bner basis for $U$, then the elements of $G$ generate $U$. 
\end{remark}

\begin{definition}
Let $U$ be a submodule of $F$ generated by $f_1,\dots,f_m$. Suppose $G$ is a free module over $S$ with the standard basis $g_1,\dots,g_m$ and let $\alpha:G\to U$ be a homomorphism such that $\alpha(g_i)=f_i$. The Schreyer monomial order on $G$ induced by $f_1,\dots,f_m$ and $<$ (a given monomial order in $F$) and denoted by $<_{f_1,\dots,f_m}$ is defined as the following: Let $ug_i$ and $vg_j$ be monomials in $G$. Then we set
$$
ug_i<_{f_1,\dots,f_m}vg_j\iff \LM(uf_i)<\LM(vf_j), \text{or} \LM(uf_i)=\LM(vf_j)\ \text{and}\ j<i.
$$ 
\end{definition}
This order is a monomial order, see~\cite{ene2011gr}.
	
\section{Syzygies of $\underline{s}$-monomial type elements}

As we know the sequence $\underline{s}=s_1,\dots,s_k$ is a weak regular sequence if $s_i$ is an $R/\langle s_1,\dots,s_{i-1}\rangle$-regular element for $i=1,\dots,k$. We fix a permutable weak regular sequence $\underline{s}=s_1,\dots,s_k$ in $R$, where none of them is a unit. By an $\underline{s}$-monomial we mean a monomial in this fixed weak regular sequence and by an $\underline{s}$-term we mean an element of the form $ua$, where $u$ is a unit and $a$ is an $\underline{s}$-monomial. 
	
\begin{remark}
Any $\underline{s}$-monomial has a unique representation. Also, we consider 1 as an $\underline{s}$-monomial, even though 1 is also a monomial in $S$.
\end{remark}
	
\begin{remark}
If $a,b\in R$ and $a,b$ are $\underline{s}$-terms, then $\lcm(a,b)$ and $\gcd(a,b)$ exist and they are unique up to unit (by $\lcm$ (resp. $\gcd$) we mean the least common multiple (resp. the greatest common divisor)). If $a=u_1s_1^{\alpha_1}\dots s_k^{\alpha_k},\ b=u_2s_1^{\beta_1}\dots s_k^{\beta_k}$, $\alpha_i,\beta_i\in\mathbb{Z}_{\ge 0}$ are $\underline{s}$-terms, then we canonically choose $\gcd(a,b)=s_1^{\gamma_1}\dots s_k^{\gamma_k}$ and $\lcm(a,b)=s_1^{\eta_1}\dots s_k^{\eta_k}$, where $\gamma_i=\min\{\alpha_i,\beta_i\}$ and $\eta_i=\max\{\alpha_i,\beta_i\}$.
\end{remark}

\begin{corollary}\cite[Corollary 2.6]{jabarnejad2017rees}\label{syzygy}
Let $a_1,\dots,a_n\in R$ be $\underline{s}$-monomials. Then the 
$R$-module $\{(c_1,\dots,c_n);\ c_1a_1+\dots+c_na_n=0\}\subseteq R^n$ is generated by $\{\frac{\lcm(a_i,a_j)}{a_i}e_i-\frac{\lcm(a_i,a_j)}{a_j}e_j;\ 1\le i < j\le n\}$.
\end{corollary}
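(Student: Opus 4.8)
The plan is to reduce the statement to the fact that ideals generated by $\underline s$-monomials behave, with respect to colons and intersections, like ordinary monomial ideals, and then to squeeze that fact out of the regular-sequence hypothesis. Write $\tau_{ij}=\tfrac{\lcm(a_i,a_j)}{a_i}e_i-\tfrac{\lcm(a_i,a_j)}{a_j}e_j$ for the proposed generators. First I would record the elementary consequences of $\underline s$ being a \emph{permutable} weak regular sequence of non-units: each $s_l$ is a nonzerodivisor on $R$ (permute it to the front), hence every $\underline s$-monomial is a nonzerodivisor; the powers $s_1^{c_1},\dots,s_k^{c_k}$ of a permutable regular sequence again form one; and a product of elements that are each nonzerodivisors modulo a fixed ideal $I$ is again a nonzerodivisor modulo $I$. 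The one lemma with content here is a \emph{peeling} step: if $t_1,\dots,t_p,u$ is a permutable regular sequence then $u$ is a nonzerodivisor modulo $\langle t_1\cdots t_p\rangle$ (divide out $t_1$, then $t_2$, and so on, using at each stage that $t_l,u$ is a regular sequence). Combining these gives, for coprime $\underline s$-monomials $a,b$ (disjoint supports in $\underline s$), that $b$ is a nonzerodivisor modulo $\langle a\rangle$; dividing out the $\gcd$ then yields $\langle a\rangle\cap\langle b\rangle=\langle\lcm(a,b)\rangle$ and, for arbitrary $\underline s$-monomials, the colon formula $\langle a_i\rangle:a_n=\bigl\langle\lcm(a_i,a_n)/a_n\bigr\rangle$.

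With these in place I would prove the corollary by induction on $n$. For $n=1$ the module is $0$ because $a_1$ is a nonzerodivisor. For $n\ge 2$, given a syzygy $(c_1,\dots,c_n)$ we have $c_na_n\in\langle a_1,\dots,a_{n-1}\rangle$, i.e.\ $c_n$ lies in the colon ideal; the point is the additivity
\[
\langle a_1,\dots,a_{n-1}\rangle:a_n \;=\; \sum_{i<n}\bigl(\langle a_i\rangle:a_n\bigr) \;=\; \sum_{i<n}\Bigl\langle \tfrac{\lcm(a_i,a_n)}{a_n}\Bigr\rangle .
\]
Granting it, write $c_n=\sum_{i<n}d_i\,\lcm(a_i,a_n)/a_n$; then $(c_1,\dots,c_n)+\sum_{i<n}d_i\tau_{in}$ is a syzygy with zero last coordinate, hence a syzygy of $(a_1,\dots,a_{n-1})$, and the inductive hypothesis expresses it — and therefore the original syzygy — in terms of the $\tau_{ij}$.

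The hard part will be the colon additivity; the inclusion $\supseteq$ is trivial and only $\subseteq$ requires work. Over a field this is immediate because membership in a monomial ideal can be tested monomial by monomial, but over a general $R$ there is no such term decomposition and one genuinely needs the regular sequence. I would argue by induction on $k$, proving simultaneously the syzygy statement and the colon additivity, by passing to $R/\langle s_k\rangle$: permutability makes $\overline{s_1},\dots,\overline{s_{k-1}}$ a regular sequence there, so both statements are available over that quotient. Writing $a_i=s_k^{e_i}b_i$ with $b_i$ a monomial in $s_1,\dots,s_{k-1}$, and reducing to the case $\min_i e_i=0$ by cancelling the common $s_k$-power, I would reduce the given relation modulo $s_k$: this kills the $a_i$ with $e_i>0$ and leaves a relation among the $b_i$ with $e_i=0$ — a syzygy relation if $s_k\mid a_n$, a colon relation otherwise — to which the inductive statement over $R/\langle s_k\rangle$ applies. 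Lifting the resulting expression to $R$, subtracting the corresponding combination of $\tau_{ij}$'s, and dividing the remainder by $s_k$ produces a relation among $\underline s$-monomials of strictly smaller total $s_k$-degree $\sum_i e_i$, handled by a secondary induction on that degree (the base $\sum_i e_i=0$ being the case $k-1$). The delicate bookkeeping — and the main obstacle — is to check that this "divide by $s_k$" step is compatible with the $\tau_{ij}$: under the coordinatewise map sending $e_i\mapsto s_ke_i$ for the indices with $e_i=0$ and $e_i\mapsto e_i$ otherwise, each Taylor syzygy of the reduced family must land on $\tau_{ij}$ or $s_k\tau_{ij}$. Verifying this, and the slightly longer variant when $e_n=0$, is essentially the whole proof; everything else is formal.

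Finally, a cleaner way to organize the write-up is homological: viewing $R$ as a module over $A=\mathbb{Z}[y_1,\dots,y_k]$ via $y_l\mapsto s_l$, the corollary is equivalent to the vanishing of $\operatorname{Tor}_1^A(R,A/J)$ for the monomial ideal $J=\langle\mathbf{y}^{\alpha_1},\dots,\mathbf{y}^{\alpha_n}\rangle$ (where $a_i=\mathbf{s}^{\alpha_i}$), i.e.\ to exactness in homological degree $1$ of the Taylor complex of $a_1,\dots,a_n$ over $R$; the regular-sequence facts of the first paragraph are exactly what make that complex acyclic (for $a_i=s_i$ it is literally the Koszul complex on $\underline s$).
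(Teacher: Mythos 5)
The paper does not actually prove this corollary: it is imported verbatim from \cite[Corollary 2.6]{jabarnejad2017rees}, so there is no in-paper argument to compare you against and your proposal has to stand on its own --- and in outline it does. The step you flag as ``essentially the whole proof'' does check out: write $a_i=s_k^{e_i}b_i$ with $s_k\nmid b_i$, set $a_i''=a_i$ if $e_i=0$ and $a_i''=a_i/s_k$ if $e_i>0$, let $\tau''_{ij}$ be the Taylor syzygies of $(a_i'')$ and $\phi$ the coordinatewise map multiplying the coordinates with $e_i=0$ by $s_k$; then $\phi(\tau''_{ij})=s_k\tau_{ij}$ when $e_i=e_j=0$, and $\phi(\tau''_{ij})=\tau_{ij}$ in the other two cases (for $e_i=0<e_j$ use $\lcm(a_i,a_j)=s_k^{e_j}\lcm(b_i,b_j)$; for $e_i,e_j>0$ the common factor $s_k$ cancels from $\lcm(a_i,a_j)/a_i$ and $\lcm(a_i,a_j)/a_j$), and since $\phi$ carries syzygies of $(a_i'')$ to syzygies of $(a_i)$, while the syzygy left over after subtracting the lifted combination of the $\tau_{ij}$ with both indices in $\{i:e_i=0\}$ is exactly $\phi$ of a syzygy of $(a_i'')$ of strictly smaller total $s_k$-degree, the double induction closes. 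Two adjustments would tighten this. First, drop the colon-additivity detour: for a colon relation the remainder after the mod-$s_k$ lift is not termwise divisible by $s_k$, so to ``divide by $s_k$'' you must in any case repackage it as a syzygy of a modified monomial family --- i.e.\ run the induction directly on the syzygy statement; colon additivity (and with it your induction on $n$) then follows formally by reading off the last coordinate of an expression of $(d_1,\dots,d_{n-1},-c)$ in the $\tau_{ij}$, and the peeling/intersection lemmas of your first paragraph become unnecessary. Second, after passing to $R/\langle s_k\rangle$ the images $\bar s_1,\dots,\bar s_{k-1}$ may become units (e.g.\ $2,3\in\mathbb{Z}$ reduced mod $3$), so the inductive statement must be phrased for permutable weak regular sequences that are allowed to contain units; nothing in the argument uses non-unitness, but the standing hypothesis of the paper excludes units, so this has to be said. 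Finally, the closing $\operatorname{Tor}_1$ paragraph is an equivalent reformulation rather than an alternative proof: degree-one exactness of the Taylor complex over $R$ for an arbitrary monomial family is precisely the assertion being proved, so it cannot be ``read off'' from the regular-sequence facts without an induction of the kind you describe.
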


\begin{definition}
If $f \in F$, and $\lc(f)$ is an $\underline{s}$-term, then we say $f$ is $\underline{s}$-monomial type.
\end{definition}

Let $f,g$ be $\underline{s}$-monomial type elements. If the leading terms of $f$ and $g$ don't involve the same basis element, then we don't define the $S$-element, otherwise we have
$$
S(f,g)=\frac{\lcm(\lm(f),\lm(g))}{\lt(f)}f-\frac{\lcm(\lm(f),\lm(g))}{\lt(g)}g.
$$
It is clear that the $S$-element of such $f$ and $g$ is in a finitely generated free module over $K[x_1,...,x_n]$, where $K$ is the total ring of fractions of $R$. Now, we define the 
$S^{'}$-element for such $\underline{s}$-monomial type elements in $F$.

$$
S^{'}(f,g)=\lcm(\lc(f),\lc(g))S(f,g).
$$ 
It is clear that $S^{'}(f,g)\in F$.

The following lemma is the key result that will be used to prove Buchberger's criterion for our case. But before we start we observe a notation: If $\delta=(\delta_1,\dots,\delta_n)\in \mathbb{Z}_{\ge 0}^{n}$, then by $x^{\delta}$ we mean $x_1^{\delta_1}\dots x_n^{\delta_n}$.  

\begin{lemma}\cite[Lemma 2.8]{jabarnejad2017rees}\label{pre-main-lemma1}
Let $f_1,\dots,f_m\in F$ be $\underline{s}$-monomial type. Suppose we have a sum $\sum_{i=1}^{m}c_if_i$, where $c_i\in R$ and for all $i$, $\LM(f_i)=x^{\delta}e_l, \delta\in \mathbb{Z}_{\ge 0}^{n}$. If $x^{\delta}e_l>\LT(\sum_{i=1}^{m}c_if_i)$, then $\sum_{i=1}^{m}c_if_i$ is an $R$-linear combination of the $S^{'}$-elements $S^{'}(f_i,f_j)$, for $1\le i<j \le m$. We also have $x^{\delta}e_l>\LM\left(S^{'}(f_i,f_j)\right)$.
\end{lemma}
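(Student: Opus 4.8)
The plan is to reduce the $S'$-element identity to the purely $R$-linear syzygy statement of Corollary~\ref{syzygy} by stripping away the common monomial $x^\delta$. First I would write each leading term as $\LT(f_i) = \lc(f_i)\,\lm(f_i)\,e_l$ with $\lm(f_i)\,|\,x^\delta$ (this is forced, since all the $\LM(f_i)$ equal $x^\delta e_l$, so in fact $\lm(f_i)=x^\delta$ for every $i$; the $\lcm$ of the $\lm$'s is just $x^\delta$). Hence $S(f_i,f_j) = \lc(f_j)^{-1} f_j \cdot(\text{unit stuff})$ — more precisely, $S'(f_i,f_j) = \lcm(\lc(f_i),\lc(f_j))\bigl(\tfrac{1}{\lc(f_i)}f_i - \tfrac{1}{\lc(f_j)}f_j\bigr) = \tfrac{\lcm(\lc f_i,\lc f_j)}{\lc f_i}f_i - \tfrac{\lcm(\lc f_i,\lc f_j)}{\lc f_j}f_j$, and since $\lc(f_i),\lc(f_j)$ are $\underline s$-terms, the coefficients $\tfrac{\lcm}{\lc f_i}$ and $\tfrac{\lcm}{\lc f_j}$ lie in $R$ (by the previous remark on $\lcm$ of $\underline s$-terms). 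So each $S'(f_i,f_j)$ is an honest element of $F$ whose leading term, if nonzero, is a multiple of $x^\delta e_l$ with coefficient $\tfrac{\lcm}{\lc f_i}\lc f_i - \tfrac{\lcm}{\lc f_j}\lc f_j = 0$; thus the $x^\delta e_l$-coefficient of $S'(f_i,f_j)$ vanishes, giving $\LM(S'(f_i,f_j)) < x^\delta e_l$ immediately.

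Next I would look at the hypothesis $x^\delta e_l > \LT(\sum c_i f_i)$: this says precisely that the coefficient of $x^\delta e_l$ in $\sum c_i f_i$ is zero, i.e. $\sum_i c_i \lc(f_i) = 0$ in $R$. Writing $a_i := \lc(f_i)$, these are $\underline s$-terms, hence (up to units) $\underline s$-monomials, so Corollary~\ref{syzygy} applies: the syzygy $(c_1,\dots,c_m)$ is an $R$-linear combination of the basic syzygies $\sigma_{ij} = \tfrac{\lcm(a_i,a_j)}{a_i}\epsilon_i - \tfrac{\lcm(a_i,a_j)}{a_j}\epsilon_j$. (Strictly, Corollary~\ref{syzygy} is stated for $\underline s$-monomials; since each $a_i$ is a unit times an $\underline s$-monomial, absorbing the units changes the basic syzygies only by units, which is harmless.) Applying the map $\epsilon_i \mapsto f_i$ to this expression turns the relation $\sum c_i \epsilon_i = \sum_{i<j} r_{ij}\sigma_{ij}$ into $\sum_i c_i f_i = \sum_{i<j} r_{ij}\bigl(\tfrac{\lcm(a_i,a_j)}{a_i}f_i - \tfrac{\lcm(a_i,a_j)}{a_j}f_j\bigr) = \sum_{i<j} r_{ij}\,S'(f_i,f_j)$, which is exactly the claimed $R$-linear combination.

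The one genuine subtlety — and the step I expect to take the most care — is the interaction between the $R$-coefficients and the monomial $x^\delta$: I must be sure that multiplying the $R$-syzygy $(c_i)$ by $f_i$ rather than by $a_i$ does not spoil the identity. It does not, because $f_i - a_i x^\delta e_l$ has leading monomial strictly below $x^\delta e_l$, but the syzygy relation $\sum c_i a_i = 0$ only controls the $x^\delta e_l$-component; so a priori $\sum c_i f_i$ and $\sum_{i<j} r_{ij} S'(f_i,f_j)$ agree in top degree but could differ below. The resolution is that Corollary~\ref{syzygy} is an identity of syzygy modules, not merely of leading terms: once $(c_i) = \sum r_{ij}\sigma_{ij}$ holds as an identity of $R$-tuples, substituting the actual elements $f_i$ for the formal basis vectors is a ring homomorphism evaluation and the identity is preserved verbatim — there is no "lower-order error term" to track. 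Finally, since each $S'(f_i,f_j)$ satisfies $x^\delta e_l > \LM(S'(f_i,f_j))$ from the first paragraph, the second assertion of the lemma follows, completing the proof. \qed
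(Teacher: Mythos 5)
Your proof is correct: the reductions are all valid — since every $\LM(f_i)=x^\delta e_l$, the hypothesis forces $\sum_i c_i\lc(f_i)=0$, Corollary~\ref{syzygy} (with units absorbed into the $\underline{s}$-terms) expresses $(c_1,\dots,c_m)$ through the basic syzygies, and substituting $f_i$ for the basis vectors sends those generators exactly to the $S^{'}(f_i,f_j)$, whose $x^\delta e_l$-coefficient visibly cancels. This is essentially the argument the paper intends (it defers the proof to the cited reference, but Corollary~\ref{syzygy} is stated immediately beforehand precisely as the engine for this lemma), so no further comparison is needed.
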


The proof of the following theorem is similar to \cite[Theorem 6]{cox2007ideals}, but since we will use the proof later on we provide the proof.

\begin{theorem}[Buchberger's criterion]\label{Grobner-basis}
Let $G=\{f_1,...,f_m\}$ be a family of $\underline{s}$-monomial type elements in $F$ and $U=\langle f_1,...,f_m\rangle$. Then $G$ is a Gr\"{o}bner basis for $U$ iff for every $1\le i<j\le m$, $S^{'}(f_i,f_j)\rightarrow_G 0$.
\end{theorem}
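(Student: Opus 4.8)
The plan is to follow the classical proof of Buchberger's criterion, \cite[Theorem 6]{cox2007ideals}, but carrying the ring coefficients along and routing the non‑cancellation information through Lemma~\ref{pre-main-lemma1}. Throughout I will use that every $\underline{s}$‑term is a nonzerodivisor in $R$: since $\underline{s}$ is \emph{permutable}, each $s_i$ is $R$‑regular, hence so is every $\underline{s}$‑monomial, hence so is every $\underline{s}$‑term. For the ``only if'' direction, suppose $G$ is a Gr\"obner basis for $U$. Whenever $S'(f_i,f_j)$ is defined it equals $Af_i-Bf_j$ with $A=\frac{\lcm(\lc(f_i),\lc(f_j))}{\lc(f_i)}\cdot\frac{\lcm(\lm(f_i),\lm(f_j))}{\lm(f_i)}$ and the symmetric expression for $B$; since $\lc(f_i)\mid\lcm(\lc(f_i),\lc(f_j))$ in $R$ and $\lm(f_i)\mid\lcm(\lm(f_i),\lm(f_j))$ in $S$, both $A,B$ lie in $S$, so $S'(f_i,f_j)\in\langle f_1,\dots,f_m\rangle=U$ and Lemma~\ref{groebner-generator} gives $S'(f_i,f_j)\to_G 0$.

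For the ``if'' direction it suffices to show $\LT(f)\in\langle\LT(f_1),\dots,\LT(f_m)\rangle$ for every nonzero $f\in U$, the reverse containment being immediate from $f_i\in U$. I would fix such an $f$, write $f=\sum_{i=1}^m h_if_i$ with $h_i\in S$, and, among all such expressions, choose one for which the monomial $x^\delta e_l:=\max\{\lmi(h_i)\LM(f_i):h_i\neq 0\}$ of $F$ is smallest (a minimum exists because the order well‑orders the monomials of $F$); put $I=\{i:\lmi(h_i)\LM(f_i)=x^\delta e_l\}$. Since $\LM(h_if_i)\le\lmi(h_i)\LM(f_i)$ we have $\LM(f)\le x^\delta e_l$, and the crux is that for a minimal expression this is an equality. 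Granting that, no pair of a monomial of $h_i$ and a monomial of $f_i$ other than $(\lmi(h_i),\LM(f_i))$ can have product $x^\delta e_l$ (a strictly smaller monomial of $h_i$ would force a complementary monomial of $f_i$ larger than $\LM(f_i)$), so reading off the coefficient of $x^\delta e_l$ gives $\lc(f)=\sum_{i\in I}\lci(h_i)\lc(f_i)$; then, using $\lmi(h_i)\LT(f_i)=\lc(f_i)\,x^\delta e_l$ for $i\in I$,
$$
\LT(f)=\Big(\sum_{i\in I}\lci(h_i)\lc(f_i)\Big)x^\delta e_l=\sum_{i\in I}\lci(h_i)\lmi(h_i)\LT(f_i)\in\langle\LT(f_1),\dots,\LT(f_m)\rangle .
$$

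To prove the equality $\LM(f)=x^\delta e_l$ for a minimal expression I would argue by contradiction: assume $\LM(f)<x^\delta e_l$, set $g_i=\lmi(h_i)f_i$ and $c_i=\lci(h_i)$ for $i\in I$ (each $g_i$ is $\underline{s}$‑monomial type with leading monomial $x^\delta e_l$), and rewrite $f=\sum_{i\in I}c_ig_i+\sum_{i\in I}(h_i-\lti(h_i))f_i+\sum_{i\notin I}h_if_i$, where the last two sums have all leading monomials below $x^\delta e_l$; hence $\LM\big(\sum_{i\in I}c_ig_i\big)<x^\delta e_l$. Lemma~\ref{pre-main-lemma1} then writes $\sum_{i\in I}c_ig_i=\sum_{i<j,\, i,j\in I}d_{ij}\,S'(g_i,g_j)$ with $d_{ij}\in R$ and $x^\delta e_l>\LM(S'(g_i,g_j))$. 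A direct computation (using $\lcm(\lm(g_i),\lm(g_j))=x^\delta$ and $\lc(g_i)=\lc(f_i)$) shows $S'(g_i,g_j)=w_{ij}\,S'(f_i,f_j)$ for the monomial $w_{ij}=x^\delta/\lcm(\lm(f_i),\lm(f_j))\in S$; multiplying a reduction $S'(f_i,f_j)=\sum_k p^{(ij)}_k f_k$ by $w_{ij}$ (leading monomials of elements of $F$ and of $S$ both scale under multiplication by a monomial) yields $S'(g_i,g_j)\to_G 0$ with every $\lmi(w_{ij}p^{(ij)}_k)\LM(f_k)\le\LM(S'(g_i,g_j))<x^\delta e_l$. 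Substituting these reductions back into $\sum_{i\in I}c_ig_i$ and hence into $f$ produces a new expression $f=\sum_k h'_kf_k$ with $\lmi(h'_k)\LM(f_k)<x^\delta e_l$ for all $k$, contradicting the minimality of $x^\delta e_l$.

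The main obstacle, and the only spot where the hypotheses truly enter, is the coefficient bookkeeping over a possibly non‑domain $R$: one must ensure that the leading terms of the $g_i=\lmi(h_i)f_i$ do not degenerate and that no accidental cancellation spoils the coefficient of $x^\delta e_l$ — this is exactly where $\underline{s}$‑terms being nonzerodivisors is needed — and verify the identity $S'(g_i,g_j)=w_{ij}S'(f_i,f_j)$ together with the stability of ``$\to_G 0$'' under multiplication by a monomial. Everything else is the usual well‑ordering descent.
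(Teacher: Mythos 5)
Your proposal is correct and follows essentially the same route as the paper: a minimal expression $f=\sum h_if_i$, splitting off the top part, applying Lemma~\ref{pre-main-lemma1} together with the scaling identity $S^{'}(\lmi(h_i)f_i,\lmi(h_j)f_j)=w_{ij}S^{'}(f_i,f_j)$, substituting the standard reductions, and contradicting minimality. The only additions are that you spell out the easy ``only if'' direction via Lemma~\ref{groebner-generator} and the coefficient-reading step at $x^{\delta}e_l$, which the paper leaves implicit.
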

\begin{proof}
Using Lemma~\ref{pre-main-lemma1}, the proof will proceed. Let $f\in U$, we consider all possible ways that $f=\sum_{i=1}^{m}g_if_i$, where $g_i\in S$, and if $x^{u(i)}e_{u(i)}=\lmi(g_i)\LM(f_i)$ and $x^\delta e_l=\max (x^{u(1)}e_{u(1)},\dots,x^{u(m)}e_{u(m)})$, so $x^\delta e_l$ is minimal. Clearly $x^{\delta}e_l\ge \LT(f)$. If $\LM(f)=x^{\delta}e_l$, then $\LT(f)\in \langle \LT(f_i)\rangle_{i=1}^{m}$, if not, thus $x^{\delta}e_l>\LT(f)$. Then we can write
\begin{gather*}
f=\sum_{x^{u(i)}e_{u(i)}=x^\delta e_l}g_if_i + \sum_{x^{u(i)}e_{u(i)}<x^\delta e_l}g_if_i\\
= \sum_{x^{u(i)}e_{u(i)}=x^\delta e_l}\lti(g_i)f_i+\sum_{x^{u(i)}e_{u(i)}=x^\delta e_l}(g_i-\lti(g_i))f_i+\sum_{x^{u(i)}e_{u(i)}<x^\delta e_l}g_if_i.
\end{gather*}
The $x^\delta e_l$ is greater than all monomials appearing in the fourth and fifth sums. Then the assumption $x^\delta e_l>\LT(f)$, means that the $x^\delta e_l$ is also greater than leading term of the third sum. 
	
Let $\lti(g_i)=c_ix^{\alpha(i)}$, then $\sum_{x^{u(i)}e_{u(i)}=x^\delta e_l}\lti(g_i)f_i=\sum_{x^{u(i)}e_{u(i)}=x^\delta e_l}c_ix^{\alpha(i)}f_i$, we assume $x^{\alpha(i)}f_i=h_i$, by Lemma~\ref{pre-main-lemma1},
 with a suitable order, $\sum_{x^{u(i)}e_{u(i)}=x^\delta e_l}c_ix^{\alpha(i)}f_i$ is a linear combination of $S^{'}(x^{\alpha(j)}f_j,x^{\alpha(k)}f_k)$, but 
$$
S(x^{\alpha(j)}f_j,x^{\alpha(k)}f_k)=x^{\delta - \gamma_{jk}}S(f_j,f_k)
$$
where $x^{\gamma_{jk}}=\lcm(\lm(f_j),\lm(f_k))$. Then
\begin{gather*}
S^{'}(x^{\alpha(j)}f_j,x^{\alpha(k)}f_k)=x^{\delta - \gamma_{jk}}S^{'}(f_j,f_k)\\
\Rightarrow \sum_{x^{u(i)}e_{u(i)}=x^\delta e_l}c_ix^{\alpha(i)}f_i=\sum_{x^{u(i)}e_{u(i)}=x^\delta e_l}c_{jk}x^{\delta -\gamma{jk}}S^{'}(f_j,f_k) \quad c_{jk}\in R.
\end{gather*}
On the other hand
\[
S^{'}(f_j,f_k)=\sum_{t=1}^{m}l_{tjk}f_t , l_{tjk}\in S,\ \LM(S^{'}(f_j,f_k)) \ge \lmi(l_{tjk})\LM(f_t)  
\]
thus
\begin{gather*}
x^{\delta - \gamma{jk}}S^{'}(f_j,f_k)=\sum_{t=1}^{m}a_{tjk}f_t, \ a_{tjk}=x^{\delta - \gamma_{tjk}}l_{tjk}\in S\\
x^\delta e_l>\LM(x^{\delta - \gamma_{jk}}S^{'}(f_j,f_k))\ge \lmi(a_{tjk})\LM(f_t).
\end{gather*}
We get the equation
\begin{gather*}
\sum_{x^{u(i)}e_{u(i)}=x^\delta e_l}c_{jk}x^{\delta -\gamma{jk}}S^{'}(f_j,f_k)=\sum_{j,k}c_{jk}\left( \sum a_{tjk}f_t\right)\\ 
=\sum \overline{h}_t f_t, \quad \overline{h}_t\in S, \quad x^\delta e_l>\lmi(\overline{h}_t)\LM(f_t). 
\end{gather*}
Therefore we have a contradiction.
\end{proof}

In the rest of this section our results are similar to some of the results in sections 4.4.1 and 4.4.3 in~\cite{ene2011gr} with slightly different proofs.

Let $G=\{f_1,...,f_m\}$ be a family of $\underline{s}$-monomial type elements in $F$ and they form a Gr\"{o}bner basis for submodule $U$. Then we have 
$$
S^{'}(f_i,f_j)= u_{ij}f_i-u_{ji}f_i=q_{ij,1}f_1+\dots+q_{ij,m}f_m,
$$
where $\LM(S^{'}(f_i,f_j))\ge\lmi(q_{ij,l})\LM(f_l)$, $i<j$, $u_{ij}=\frac{\lcm(b_i,b_j)}{c_ib_i}\frac{\lcm(u_i,u_j)}{u_i}$ ($c_ib_i=\lc(f_i)$, $c_i$ is a unit, $b_i$ is an $\underline{s}$-monomial and $u_i=\lm(f_i)$) and leading terms of such $f_i$ and $f_j$ involve the same basis element.

Let $g_1,\dots,g_m$ be a basis for a free module $L$ over $S$. We define
$$
r_{ij}=u_{ij}g_i-u_{ji}g_j-q_{ij,1}g_1-\dots-q_{ij,m}g_m.
$$

\begin{theorem}\label{syzygy-polynomial}
With mentioned notation and condition above $r_{ij}$ ($1\le i<j\le m$), generate the syzygies of $\{f_1,\dots,f_m\}$. 
\end{theorem}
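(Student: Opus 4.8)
The plan is to establish the two inclusions $\langle r_{ij}\colon 1\le i<j\le m\rangle\subseteq\syz(f_1,\dots,f_m)$ and $\syz(f_1,\dots,f_m)\subseteq\langle r_{ij}\rangle$ separately. Write $\alpha\colon L\to U$ for the map $g_i\mapsto f_i$, so that $\syz(f_1,\dots,f_m)=\ker\alpha$, and abbreviate the Schreyer order $<_{f_1,\dots,f_m}$ on $L$ by $\prec$. The first inclusion is immediate from the construction: $\alpha(r_{ij})=u_{ij}f_i-u_{ji}f_j-\sum_l q_{ij,l}f_l=S'(f_i,f_j)-S'(f_i,f_j)=0$. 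For the second inclusion I would run a Schreyer-style induction: assuming some element of $\ker\alpha$ is not in $\langle r_{ij}\rangle$, I would choose such an $s=\sum_{i=1}^m h_ig_i$ with $\LM_\prec(s)$ as small as possible ($\prec$ is a well-order), and derive a contradiction by producing a syzygy congruent to $s$ modulo $\langle r_{ij}\rangle$ whose leading monomial is strictly smaller.

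Two preliminary points feed into the main step. First, since $\underline{s}=s_1,\dots,s_k$ is permutable, each $s_t$ is the first term of a reordering of $\underline{s}$ and hence $R$-regular, so every $\underline{s}$-term is a nonzerodivisor; consequently $\LM(h_if_i)=\lmi(h_i)\LM(f_i)$ holds exactly for every $i$, with no collapse of the leading coefficient. Second, I would set $x^\delta e_l=\max\{\lmi(h_i)\LM(f_i):h_i\neq0\}$ and $N=\{i:h_i\neq0,\ \lmi(h_i)\LM(f_i)=x^\delta e_l\}$; for $i\in N$ one necessarily has $\LM(f_i)=u_ie_l$ with the \emph{same} basis vector $e_l$, so $S'(f_i,f_j)$ is defined for all $i,j\in N$, and $\LM_\prec(s)=\lti(h_{i_0})g_{i_0}$ with $i_0=\min N$. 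Comparing the coefficient of the monomial $x^\delta e_l$ on the two sides of $\sum_i h_if_i=0$, and using the nonzerodivisor point to see that nothing cancels unexpectedly, yields $\sum_{i\in N}\lci(h_i)\lc(f_i)=0$. Writing $\lc(f_i)=c_ib_i$ with $c_i$ a unit and $b_i$ an $\underline{s}$-monomial, this exhibits $(\lci(h_i)c_i)_{i\in N}$ as a syzygy of the $\underline{s}$-monomials $(b_i)_{i\in N}$, so Corollary~\ref{syzygy} supplies $\lambda_{jk}\in R$ with $\lci(h_i)c_i=\sum_{k\in N,\,k>i}\lambda_{ik}\frac{\lcm(b_i,b_k)}{b_i}-\sum_{j\in N,\,j<i}\lambda_{ji}\frac{\lcm(b_j,b_i)}{b_i}$ for each $i\in N$.

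Next I would put $x^{\gamma_{jk}}=\lcm(u_j,u_k)$ (so that $\gamma_{jk}\le\delta$ componentwise for $j,k\in N$, since $u_j,u_k\mid x^\delta$), form $\sigma=\sum_{j<k,\ j,k\in N}\lambda_{jk}\,x^{\delta-\gamma_{jk}}r_{jk}\in\langle r_{ij}\rangle$, and show $\LM_\prec(s-\sigma)\prec\LM_\prec(s)$. Call the monomial $\LM(x^\beta f_t)=x^\beta\LM(f_t)\in F$ the \emph{level} of a term $x^\beta g_t$. Then: every term of $r_{jk}$ has level $\le x^{\gamma_{jk}}e_l$, because $u_{jk}g_j$ and $-u_{kj}g_k$ sit exactly at level $x^{\gamma_{jk}}e_l$ (here $u_{jk}$ is a single term --- a unit times an $\underline{s}$-monomial times a monomial in the $x_i$ --- with $\LM(u_{jk}f_j)=\lcm(u_j,u_k)e_l$, and $u_{jk}f_j$, $u_{kj}f_k$ share the leading term $\lcm(b_j,b_k)x^{\gamma_{jk}}e_l$, which is exactly why these cancel in $S'(f_j,f_k)$), while every remaining term $q_{jk,t}g_t$ lies strictly below since $\lmi(q_{jk,t})\LM(f_t)\le\LM(S'(f_j,f_k))<x^{\gamma_{jk}}e_l$. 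Multiplying by $x^{\delta-\gamma_{jk}}$, and using $x^{\delta-\gamma_{jk}}\frac{\lcm(u_j,u_k)}{u_j}=x^\delta/u_j=\lmi(h_j)$ for $j\in N$, pushes every term of $x^{\delta-\gamma_{jk}}r_{jk}$ to level $\le x^\delta e_l$, with the level-$x^\delta e_l$ part being exactly $c_j^{-1}\frac{\lcm(b_j,b_k)}{b_j}\lmi(h_j)g_j-c_k^{-1}\frac{\lcm(b_j,b_k)}{b_k}\lmi(h_k)g_k$. Summing against the $\lambda_{jk}$ and invoking the coefficient identity from the previous paragraph, the level-$x^\delta e_l$ part of $\sigma$ collapses to $\sum_{i\in N}\lci(h_i)\lmi(h_i)g_i=\sum_{i\in N}\lti(h_i)g_i$, which is precisely the level-$x^\delta e_l$ part of $s$. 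Since neither $s$ nor $\sigma$ has a term above level $x^\delta e_l$, every term of $s-\sigma$ has level strictly below $x^\delta e_l$, whence $\LM_\prec(s-\sigma)\prec\lti(h_{i_0})g_{i_0}=\LM_\prec(s)$. Thus $s-\sigma$ is a syzygy; either $s-\sigma=0$, giving $s=\sigma\in\langle r_{ij}\rangle$, or, by minimality of $\LM_\prec(s)$, $s-\sigma\in\langle r_{ij}\rangle$, so again $s\in\langle r_{ij}\rangle$ --- either way the choice of $s$ is contradicted, which finishes the induction.

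I expect the genuine obstacle to be this last matching computation: checking that the cancellation at level $x^\delta e_l$ is \emph{exact}, i.e.\ that nothing leaks in from the $q_{jk,t}$-tails of the $r_{jk}$ or from the non-leading terms of the $h_i$, and that the unit factors $c_i$ line up between the decomposition coming from Corollary~\ref{syzygy} and the leading parts of the $x^{\delta-\gamma_{jk}}r_{jk}$. This is precisely where the exact shape $u_{ij}=\frac{\lcm(b_i,b_j)}{c_ib_i}\frac{\lcm(u_i,u_j)}{u_i}$, the reduction estimate $\LM(S'(f_i,f_j))\ge\lmi(q_{ij,t})\LM(f_t)$, and the Schreyer tie-break favoring the smaller index must be used in concert. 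The remaining ingredients --- well-foundedness of $\prec$, the easy inclusion, and the nonzerodivisor remark --- are routine.
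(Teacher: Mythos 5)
Your proposal is correct and follows essentially the same route as the paper's proof: extract the coefficient identity $\sum_{i\in N}\lci(h_i)\lc(f_i)=0$ at the maximal level $x^{\delta}e_l$, decompose it via Corollary~\ref{syzygy}, subtract the combination $\sum\lambda_{jk}x^{\delta-\gamma_{jk}}r_{jk}$ (the paper's $\sum f_{ij}w_{ij}r_{ij}$), and check that the level strictly drops, then conclude by induction. Your only deviations are cosmetic --- phrasing the induction as a minimal counterexample with respect to the Schreyer order instead of induction on $w_r$, and stating the easy inclusion $\alpha(r_{ij})=0$ explicitly.
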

\begin{proof}
Let $V$ be a submodule of $L$ generated by all $r_{ij}$ and let $G=\syz(f_1,\dots,f_m)$. We need to prove that $G\subseteq V$. Let $r=\sum_{j=1}^{m}h_jg_j\in G$.  

Let $w_r=\max\{\lmi(h_j)\LM(f_j); j=1,\dots,m\}$. Without loss of generality, we assume that $w_r=\lmi(h_j)\LM(f_j)$ for $j=1,\dots,t$ and $\lmi(h_j)\LM(f_j)<w_r$ for $j=t+1,\dots,m$. If $\lmi(h_j)=v_j$, and $\lci(h_j)=a_j$, then $w_r=u_jv_je_i$ for $j=1,\dots,t$, ($\LM(f_j)=u_je_i$) and $\sum_{j=1}^{t}a_jc_jb_j=0$. Hence by Corollary~\ref{syzygy}, we have
\begin{gather*}
(a_1c_1,\dots,a_tc_t)=\sum_{1\le i<j\le t}f_{ij}\frac{\lcm(b_i,b_j)}{b_i}k_i-f_{ij}\frac{\lcm(b_i,b_j)}{b_j}k_j,
\end{gather*}
where $\{k_i|1\le i\le t\}$ is the standard basis for $R^t$.

For each fixed $1\le i\le t-1$, we have $u_iv_i=u_jv_j$ for $j=i+1,\dots,t$ so that there exist monomials $w_{ij}$ such that $v_j=w_{ij}\frac{\lcm(u_i,u_j)}{u_j}$ for $j=i+1,\dots,t$. 

Assume for the moment that we already know that $w_{r^{'}}<w_r$ for the relation
$$
r^{'}=r-\sum_{i=1}^{t-1}\sum_{j=i+1}^{t}f_{ij}w_{ij}r_{ij}.
$$
By induction we may then assume that $r^{'}\in V$, which then implies that $r\in V$, since $\sum_{i=1}^{t-1}\sum_{j=i+1}^{t}f_{ij}w_{ij}r_{ij}\in V$. Hence it is enough to prove that $w_{r^{'}}<w_r$. Let $\rho_{ij}=\sum_{l=1}^{m}q_{ij,l}g_l$, where $\sum_{l=1}^{m}q_{ij,l}f_l$ is the 
standard expression of $S^{'}(f_i,f_j)$. Then we have
\begin{gather*}
r^{'}=\sum_{j=1}^{t}h_jg_j+\sum_{j=t+1}^{m}h_jg_j\\
-\sum_{i=1}^{t-1}\sum_{j=i+1}^{t}f_{ij}w_{ij}\left(\frac{\lcm(b_i,b_j)}{c_ib_i}\frac{\lcm(u_i,u_j)}{u_i}g_i-\frac{\lcm(b_i,b_j)}{c_jb_j}\frac{\lcm(u_i,u_j)}{u_j}g_j\right)+\sum_{i=1}^{t-1}\sum_{j=i+1}^{t}f_{ij}w_{ij}\rho_{ij}.
\end{gather*}
On the other hand we have
$$
w_{ij}\frac{\lcm(u_i,u_j)}{u_i}=\frac{w_{ij}u_j}{u_i}\frac{\lcm(u_i,u_j)}{u_j}=\frac{u_jv_j}{u_i}=\frac{u_iv_i}{u_i}=v_i,
$$
then we have
\begin{gather*}
r^{'}=\sum_{j=1}^{t}h_jg_j+\sum_{j=t+1}^{m}h_jg_j\\
-\sum_{i=1}^{t-1}\sum_{j=i+1}^{t}v_if_{ij}\frac{\lcm(b_i,b_j)}{c_ib_i}g_i-v_jf_{ij}\frac{\lcm(b_i,b_j)}{c_jb_j}g_j+\sum_{i=1}^{t-1}\sum_{j=i+1}^{t}f_{ij}w_{ij}\rho_{ij}\\
=\sum_{j=1}^{t}(h_j-a_jv_j)g_j+\sum_{j=t+1}^{m}h_jg_j+\sum_{i=1}^{t-1}\sum_{j=i+1}^{t}f_{ij}w_{ij}\rho_{ij}.
\end{gather*}
It is clear that $\lmi(h_j-a_jv_j)\LM(f_j)<w_r$ for $1\le j\le t$. On the other hand for a summand $f_{ij}w_{ij}\rho_{ij}$ ($1\le i\le t-1$, $2\le j\le t$) its summands are $f_{ij}w_{ij}q_{ij,l}g_l$. Then we have
\begin{gather*}
\lmi(f_{ij}w_{ij}q_{ij,l})\LM(f_l)\le\LM(w_{ij}S^{'}(f_i,f_j))\\
<\max\{\LM(w_{ij}\frac{\lcm(u_i,u_j)}{u_i}f_i),\LM(w_{ij}\frac{\lcm(u_i,u_j)}{u_j}f_j)\}\\
=\max\{\LM(v_if_i),\LM(v_jf_j)\}\\
\le \max\{\lmi(h_i)\LM(f_i),\lmi(h_j)\LM(f_j)\}\le w_r.
\end{gather*}
This completes the proof.
\end{proof} 

\begin{corollary}\label{syzygy-monomial}
Let $U$ be a submodule of $F$ generated by $\underline{s}$-monomial type terms $f_1,\dots,f_m$. Then $\syz(f_1,\dots,f_m)$ is generated by the relations $r_{ij}=u_{ij}g_i-u_{ji}g_j$ for all $1\le i<j\le m$ for which $f_i$ and $f_j$ involve the same basis element.
\end{corollary}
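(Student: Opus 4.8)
The plan is to derive this as an immediate special case of Theorem~\ref{syzygy-polynomial}: when each generator is a single $\underline{s}$-monomial type term, the family is automatically a Gr\"obner basis and all the relevant $S^{'}$-elements are literally $0$, so the correction terms $q_{ij,l}g_l$ appearing in the generators of Theorem~\ref{syzygy-polynomial} disappear.

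\emph{Step 1: $G=\{f_1,\dots,f_m\}$ is a Gr\"obner basis for $U$.} Write $f_i=a_iu_ie_{l_i}$, where, since $f_i$ is an $\underline{s}$-monomial type term, $a_i=\lc(f_i)$ is an $\underline{s}$-term and $u_i=\lm(f_i)$ is a monomial of $S$; in particular $f_i=\LT(f_i)$. For a pair $i<j$ with $l_i\neq l_j$ there is nothing to check, since the $S$-element (hence the $S^{'}$-element) of $f_i,f_j$ is not defined. For a pair $i<j$ with $l_i=l_j=l$, put $x^{\gamma}=\lcm(u_i,u_j)$; then
\[
\frac{x^{\gamma}}{\lt(f_i)}\,f_i=\frac{x^{\gamma}}{a_iu_i}\,a_iu_ie_{l}=x^{\gamma}e_{l}=\frac{x^{\gamma}}{a_ju_j}\,a_ju_je_{l}=\frac{x^{\gamma}}{\lt(f_j)}\,f_j ,
\]
so $S(f_i,f_j)=0$ and therefore $S^{'}(f_i,f_j)=\lcm(\lc(f_i),\lc(f_j))\,S(f_i,f_j)=0$, which trivially reduces to zero modulo $G$. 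By Theorem~\ref{Grobner-basis} (Buchberger's criterion), $G$ is a Gr\"obner basis for $U$.

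\emph{Step 2: apply Theorem~\ref{syzygy-polynomial}.} Since $G$ is a Gr\"obner basis consisting of $\underline{s}$-monomial type elements, Theorem~\ref{syzygy-polynomial} shows that $\syz(f_1,\dots,f_m)$ is generated by the elements
\[
r_{ij}=u_{ij}g_i-u_{ji}g_j-q_{ij,1}g_1-\dots-q_{ij,m}g_m ,
\]
one for each pair $i<j$ whose leading terms involve the same basis element, where $\sum_{l}q_{ij,l}f_l$ is a standard expression of $S^{'}(f_i,f_j)$. By Step 1 we have $S^{'}(f_i,f_j)=0$, and the zero element admits the standard expression with all $q_{ij,l}=0$; hence $r_{ij}=u_{ij}g_i-u_{ji}g_j$, which is exactly the asserted generating set.

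\emph{Where the work is.} There is essentially no obstacle here. Once one observes that each generator equals its own leading term, the cancellation $S(f_i,f_j)=0$ is the one-line computation above, and the only point requiring a word of care is that the standard (reduced) expression of the zero element may be taken with all coefficients equal to $0$, so that the $q_{ij,l}$ drop out of the formula of Theorem~\ref{syzygy-polynomial}. All the content---correctness of the generators $r_{ij}$, and the divisibility/order conditions---is inherited directly from that theorem.
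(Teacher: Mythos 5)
Your proposal is correct and follows the paper's own argument: observe that $S^{'}(f_i,f_j)=0$ for each relevant pair, conclude via Buchberger's criterion (Theorem~\ref{Grobner-basis}) that $f_1,\dots,f_m$ form a Gr\"obner basis, and then apply Theorem~\ref{syzygy-polynomial} with all correction terms $q_{ij,l}=0$. You simply spell out the one-line cancellation and the choice of the zero standard expression that the paper leaves implicit.
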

\begin{proof}
Clearly if $f_i$ and $f_j$ involve the same basis element, then $S^{'}(f_i,f_j)=0$, and so $f_1,\dots,f_m$ form a Gr\"{o}bner basis. Then the claim follows by Theorem~\ref{syzygy-polynomial}.
\end{proof}

\begin{theorem}\label{syzygy-grobner}
Let $U$ be a submodule of $F$ with Gr\"{o}bner basis $G=\{f_1,\dots,f_m\}$ and $f_i$ be $\underline{s}$-monomial type elements. Then the relations $r_{ij}$ arising from the $S^{'}$-elements of the $f_i$ and $f_j$ form a Gr\"{o}bner basis of $\syz(f_1,\dots,f_m)$ with respect to Schreyer monomial order $<_{f_1,\dots,f_m}$. Moreover with this order we have $\LT(r_{ij})=u_{ij}g_i$.
\end{theorem}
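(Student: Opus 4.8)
The plan is to prove the two assertions in the stated order, deriving the ``moreover'' first since the Gr\"obner basis part relies on it; throughout, leading data on the free module $L$ is taken with respect to $<_{f_1,\dots,f_m}$. Write $\LM(f_i)=u_ie_l$ and $\LM(f_j)=u_je_l$ (when $S'(f_i,f_j)$ is defined these leading monomials lie on a common basis vector $e_l$), and recall $S'(f_i,f_j)=u_{ij}f_i-u_{ji}f_j=\sum_l q_{ij,l}f_l$. A direct computation shows that $u_{ij}f_i$ and $u_{ji}f_j$ both have leading term $\lcm(b_i,b_j)\lcm(u_i,u_j)e_l$, which is nonzero because $\lcm(b_i,b_j)$ is an $\underline{s}$-monomial, hence a non-zero-divisor; so $\LM(u_{ij}f_i)=\lcm(u_i,u_j)e_l$ and these top terms cancel, giving $\LM(S'(f_i,f_j))<\LM(u_{ij}f_i)$. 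Since the expression $\sum_l q_{ij,l}f_l$ is standard, $\lmi(q_{ij,l})\LM(f_l)\le\LM(S'(f_i,f_j))<\LM(u_{ij}f_i)$ for all $l$, so $q_{ij,l}g_l<_{f_1,\dots,f_m}u_{ij}g_i$; and since $i<j$ with $\LM(u_{ji}f_j)=\LM(u_{ij}f_i)$, the Schreyer tie-break gives $u_{ji}g_j<_{f_1,\dots,f_m}u_{ij}g_i$. Thus $u_{ij}g_i$ is the strictly largest monomial occurring in $r_{ij}=u_{ij}g_i-u_{ji}g_j-\sum_l q_{ij,l}g_l$, and it sits on $g_i$ while the $u_{ji}g_j$-term sits on $g_j\ne g_i$ and every monomial of $q_{ij,i}g_i$ is strictly smaller than it, so no cancellation can reach it: $\LT(r_{ij})=u_{ij}g_i$.

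For the Gr\"obner basis claim, by Theorem~\ref{syzygy-polynomial} the $r_{ij}$ generate $\syz(f_1,\dots,f_m)$, so it remains to show $\LT(r)\in\langle\LT(r_{ij})\rangle=\langle u_{ij}g_i\rangle$ for every nonzero $r=\sum_{j=1}^m h_jg_j\in\syz(f_1,\dots,f_m)$ (this is the Gr\"obner basis condition, the reverse inclusion being clear as each $r_{ij}$ is a syzygy). Set $w_r=\max\{\lmi(h_j)\LM(f_j):h_j\ne 0\}$, $J=\{j:h_j\ne 0,\ \lmi(h_j)\LM(f_j)=w_r\}$, $i_0=\min J$; for $j\in J$ all $\LM(f_j)=u_je_l$ lie on one basis vector $e_l$, and I write $v_j=\lmi(h_j)$. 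It follows that $v_{i_0}g_{i_0}$ is strictly larger than the leading monomial $v_jg_j$ of $h_jg_j$ for each $j\ne i_0$, so $\LM(r)=v_{i_0}g_{i_0}$ and $\LT(r)=\lci(h_{i_0})\,v_{i_0}\,g_{i_0}$.

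The next step is to extract a scalar relation. Comparing the coefficients of the monomial $w_r$ on the two sides of $\sum_j h_jf_j=0$ --- using that the maximal monomial in a product has a unique representation (the earlier Remark) --- yields $\sum_{j\in J}\lci(h_j)\lc(f_j)=0$; writing $\lc(f_j)=c_jb_j$ with $c_j$ a unit and $b_j$ an $\underline{s}$-monomial, this becomes $\sum_{j\in J}(\lci(h_j)c_j)b_j=0$. Note $|J|\ge 2$: if $J=\{i_0\}$ then $\lci(h_{i_0})\lc(f_{i_0})=0$, and since the $\underline{s}$-term $\lc(f_{i_0})$ is a non-zero-divisor this forces $h_{i_0}=0$, a contradiction. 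By Corollary~\ref{syzygy}, $(\lci(h_j)c_j)_{j\in J}$ is an $R$-linear combination of the relations $\frac{\lcm(b_i,b_j)}{b_i}k_i-\frac{\lcm(b_i,b_j)}{b_j}k_j$; reading off the $k_{i_0}$-coordinate (and using $i_0=\min J$) gives $\lci(h_{i_0})c_{i_0}=\sum_{j\in J,\,j>i_0}f_{i_0j}\frac{\lcm(b_{i_0},b_j)}{b_{i_0}}$ for suitable $f_{i_0j}\in R$.

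Finally, as in the proof of Theorem~\ref{syzygy-polynomial} there are monomials $w_{i_0j}$ in $x_1,\dots,x_n$ with $v_{i_0}=w_{i_0j}\frac{\lcm(u_{i_0},u_j)}{u_{i_0}}$ for each $j\in J$ with $j>i_0$. Substituting the last two displays into $\LT(r)=\lci(h_{i_0})v_{i_0}g_{i_0}$ gives
\begin{gather*}
\LT(r)=\Big(\sum_{j\in J,\,j>i_0}f_{i_0j}\frac{\lcm(b_{i_0},b_j)}{c_{i_0}b_{i_0}}\Big)v_{i_0}g_{i_0}\\
=\sum_{j\in J,\,j>i_0}f_{i_0j}\,w_{i_0j}\Big(\frac{\lcm(b_{i_0},b_j)}{c_{i_0}b_{i_0}}\frac{\lcm(u_{i_0},u_j)}{u_{i_0}}\Big)g_{i_0}=\sum_{j\in J,\,j>i_0}f_{i_0j}\,w_{i_0j}\,u_{i_0j}g_{i_0},
\end{gather*}
and since $f_{i_0}$ and each $f_j$ ($j\in J$) involve the common basis vector $e_l$, every $r_{i_0j}$ occurring here is one of our generators with $\LT(r_{i_0j})=u_{i_0j}g_{i_0}$; hence $\LT(r)\in\langle\LT(r_{i_0j})\rangle\subseteq\langle\LT(r_{ij})\rangle$, completing the argument. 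I expect the bookkeeping to be the only genuine obstacle: because $R$ need not be a domain one has in general only $\LM(gf)\le\lmi(g)\LM(f)$, so one must be careful passing among the induced monomial order on $S$, the order on $F$, and the Schreyer order on $L$, and in particular in checking that the $w_r$-coefficient of $\sum_j h_jf_j$ is exactly $\sum_{j\in J}\lci(h_j)\lc(f_j)$.
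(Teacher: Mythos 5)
Your proposal is correct and takes essentially the same route as the paper: the identical tie-break/cancellation argument gives $\LT(r_{ij})=u_{ij}g_i$, and the Gr\"obner basis claim is proved by isolating the top part of an arbitrary syzygy and recognizing it as a syzygy of the leading terms. The only difference is cosmetic: where the paper invokes Corollary~\ref{syzygy-monomial} for that top part, you inline the same mechanism via Corollary~\ref{syzygy} applied to the leading coefficients (your added care about extracting the coefficient of $w_r$ and ruling out $|J|=1$ is sound).
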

\begin{proof}
Since $\lmi(u_{ij})\LM(f_i)=\lmi(u_{ji})\LM(f_j)$ and because $i<j$ we conclude that $\LT(u_{ij}g_i-u_{ji}g_j)=u_{ij}g_i$. On the other hand $\lmi(q_{ij,l})\LM(f_l)\le\LM(S^{'}(f_i,f_j))<\lmi(u_{ij})\LM(f_i)$, then $\LT(q_{ij,l}g_l)<u_{ij}g_i$. Thus it follows that $\LT(r_{ij})=u_{ij}g_i$.

Now we show that the relations $r_{ij}$ form a Gr\"{o}bner basis for $V=\syz(f_1,\dots,f_m)$. Let $g=\sum_{j=1}^{m}r_jg_j$ be an arbitrary relation. Let $\LT(r_jg_j)=a_jv_jg_j$ ($a_j\in R$, $v_j$ is a monomial of $S$) for $j=1,\dots,m$. Then $\LT(g)=a_iv_ig_i$ for some $i$. Now let $g^{'}=\sum_{j}a_jv_jg_j$, where the sum is taken over the set $\mathcal{S}$ of those $j$ for which $\LT(v_jf_j)=\LT(v_if_i)$. Then $j\ge i$ for all $j\in\mathcal{S}$. If we substitute each $g_j$ by $\LT(f_j)$, the sum becomes zero. Therefore $g^{'}$ is a relation of the elements $\LT(f_j)$ with $j\in\mathcal{S}$. Hence by Corollary~\ref{syzygy-monomial}, the element $g^{'}$ is a linear combination of elements of the form $u_{tl}g_t-u_{lt}g_l$ with $t,l\in\mathcal{S}$ and $t<l$. Since $j>i$ for all $j\in\mathcal{S}$ with $j\neq i$, $\LT(g^{'})$ is a linear combination of $u_{ij}g_i$. But each $u_{ij}g_i$ is the leading term of $r_{ij}$. This completes the proof.  
\end{proof}

\begin{theorem}
Let $R$ be a commutative ring and $F=\prod_{i\in\mathcal{A}}R$. Suppose $\underline{s}=s_1,\dots,s_k$ is a permutable weak regular sequence in $R$. Let $U=\langle a_1,\dots,a_n\rangle\subseteq F$, where $a_i$ are $\underline{s}$-monomial type terms. Then $U$ admits a free $R$-resolution 
$$
0\rightarrow F_p\rightarrow F_{p-1}\rightarrow\dots\rightarrow F_1\rightarrow F_0\rightarrow U\rightarrow 0
$$
of length $p\le k$.
\end{theorem}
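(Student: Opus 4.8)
The plan is to construct the resolution one syzygy at a time, applying the Schreyer description of syzygies (Theorem~\ref{syzygy-grobner}) at every stage, and to show that the successive kernels become free after at most $k$ steps.

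First I would make a few harmless reductions. Write $\underline{s}^{\alpha}=s_1^{\alpha_1}\cdots s_k^{\alpha_k}$ for $\alpha\in\mathbb{Z}_{\ge 0}^{k}$. Each generator has the form $a_i=c_i\,\underline{s}^{\alpha_i}e_{\lambda_i}$ with $c_i$ a unit, so replacing $a_i$ by $c_i^{-1}a_i$ we may assume $a_i=\underline{s}^{\alpha_i}e_{\lambda_i}$. Grouping the generators according to the basis element they involve, and using that a direct sum of free resolutions of length $\le k$ is again one of length $\le k$, we may assume all $\lambda_i$ are equal, so that $U$ is isomorphic, as an $R$-module, to an $\underline{s}$-monomial ideal $I=\langle \underline{s}^{\alpha_1},\dots,\underline{s}^{\alpha_n}\rangle\subseteq R$. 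Finally, discarding any $\underline{s}^{\alpha_i}$ divisible by some $\underline{s}^{\alpha_j}$ with $j\ne i$, we may assume $\alpha_1,\dots,\alpha_n$ is an antichain in $\mathbb{Z}_{\ge 0}^{k}$ for the componentwise order.

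Now iterate. Since the $\underline{s}^{\alpha_i}$ are $\underline{s}$-terms, every defined $S'$-element among them is $0$, so by Theorem~\ref{Grobner-basis} they form a Gr\"obner basis of $I$; set $F_0=R^n$. By Corollary~\ref{syzygy-monomial} the module $\syz(\underline{s}^{\alpha_1},\dots,\underline{s}^{\alpha_n})$ has a Gr\"obner basis consisting of the $\underline{s}$-monomial type elements $r_{ij}=u_{ij}g_i-u_{ji}g_j$ ($i<j$), where by Theorem~\ref{syzygy-grobner} $\LT(r_{ij})=u_{ij}g_i$ and $u_{ij}$ is a unit times $\underline{s}^{\,\max(0,\alpha_j-\alpha_i)}$, the maximum taken componentwise. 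Discard each $r_{ij}$ whose leading term is divisible by that of another $r_{i'j'}$: the result is still a Gr\"obner basis of the syzygy module, hence by Lemma~\ref{groebner-generator} and the remark following it a finite generating set, and I would let $F_1$ be free on this reduced Gr\"obner basis. Repeating the argument: at the $m$-th step the kernel is generated by a reduced Gr\"obner basis of $\underline{s}$-monomial type elements, so Theorem~\ref{syzygy-grobner} again produces a Gr\"obner basis of the next syzygy module consisting of $S'$-elements, which are $\underline{s}$-monomial type with explicitly known leading terms; one reduces it and continues. Every $F_m$ is finite free.

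The heart of the matter — and the step I expect to be the main obstacle — is termination: one must prove that $F_m=0$ for all $m>k$, which then yields $0\to F_p\to\cdots\to F_0\to U\to 0$ with $p\le k$ (and if an earlier kernel vanishes, shorter). Unwinding the iteration, the $m$-th reduced Gr\"obner basis is indexed by certain increasing chains $(g_0<\cdots<g_m)$ of generators, the element attached to such a chain having leading term a unit times $\underline{s}^{\beta^{(m)}_{(g_0,\dots,g_m)}}$ on the basis vector of $(g_0,\dots,g_{m-1})$, where (by the syzygy description of Corollary~\ref{syzygy})
\[
\beta^{(0)}_{(g_0)}=\alpha_{g_0},\qquad \beta^{(m)}_{(g_0,\dots,g_m)}=\max\!\bigl(0,\ \beta^{(m-1)}_{(g_0,\dots,g_{m-2},g_m)}-\beta^{(m-1)}_{(g_0,\dots,g_{m-1})}\bigr)
\]
componentwise. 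The reduction step forces, for each fixed prefix $(g_0,\dots,g_{m-1})$, the retained vectors $\beta^{(m)}_{(g_0,\dots,g_{m-1},\,\cdot\,)}$ to form an antichain in $\mathbb{Z}_{\ge 0}^{k}$. Since no member of an antichain dominates another, a bookkeeping argument on the supports of these vectors shows that a chain surviving the reduction cannot have length exceeding $k+1$ (roughly: advancing one step along a surviving chain eliminates a coordinate from the relevant support, and there are only $k$ of them); hence $F_m=0$ for $m>k$, and in particular $F_{k+1}=0$, so that the image of $F_k$ has no syzygies and is free on the basis of $F_k$. This is the monomial-module shadow of Hilbert's syzygy theorem — a monomial submodule of a free module over a polynomial ring in $k$ variables has projective dimension $\le k$ — transported to $\underline{s}$-monomials via Corollary~\ref{syzygy}; making the support bookkeeping precise in the presence of the reduction step is the delicate point.
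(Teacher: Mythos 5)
Your setup is sound up to the last step: the reductions (making the generators $\underline{s}$-monomials, splitting over the basis elements, passing to an antichain), the observation that the $S'$-elements vanish so the generators form a Gr\"obner basis, and the identification $\LT(r_{ij})=u_{ij}g_i$ with $u_{ij}$ a unit times $\underline{s}^{\max(0,\alpha_j-\alpha_i)}$ are all correct and match the machinery of Corollary~\ref{syzygy-monomial} and Theorem~\ref{syzygy-grobner}. But the termination claim $F_m=0$ for $m>k$ --- which is the entire content of the theorem --- is exactly the point you leave unproved, and the sketched ``support bookkeeping'' does not work as stated. From the antichain property you only get, for a chain extension, a coordinate in which the prefix vector $\beta^{(m-1)}_{(g_0,\dots,g_{m-1})}$ strictly dominates $\beta^{(m-1)}_{(g_0,\dots,g_{m-2},g_m)}$; that coordinate need not lie in the support of the latter, so the support of $\beta^{(m)}=\max(0,\beta^{(m-1)}_{(g_0,\dots,g_{m-2},g_m)}-\beta^{(m-1)}_{(g_0,\dots,g_{m-1})})$ need not be strictly smaller than the support one level up (e.g.\ with $k=3$, prefix vector $(0,0,1)$ and sibling $(1,1,0)$ give new vector $(1,1,0)$, no loss). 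So ``one coordinate is eliminated per step along a surviving chain'' is not justified, and discarding redundant leading terms at each stage does not by itself force the iteration to stop within $k$ steps; an unordered iterated Schreyer construction is in general only known to give a (possibly much longer) resolution, which is precisely why this bound cannot be waved through.

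The paper closes this gap with an ordering device rather than combinatorial bookkeeping: order $\underline{s}$-monomials lexicographically with $s_1>\dots>s_k$ and, at every stage, reorder the Gr\"obner basis so that among elements whose leading terms involve the same basis vector the leading coefficients decrease (the analogue of \cite[Corollary 4.17]{ene2011gr}). With that ordering, if $s_1,\dots,s_t$ do not occur in the leading terms of the current generators, then in $u_{ij}=\lcm(b_i,b_j)/b_i$ the exponent of $s_{t+1}$ is forced to be zero (since $b_i\ge b_j$ in the $s_{t+1}$-exponent), so the syzygy Gr\"obner basis of Theorem~\ref{syzygy-grobner} has leading terms free of $s_1,\dots,s_{t+1}$; induction on $k-t$ then gives length $\le k-t$, and the base case $t=k$ (unit leading coefficients, vanishing syzygies) finishes the proof. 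If you want to salvage your route, you should either import this reordering step explicitly, or prove a genuine combinatorial lemma (with a correct decreasing invariant) for your reduced chain data --- as it stands the key inequality $p\le k$ is asserted, not proved.
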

\begin{proof}
We order every $\underline{s}$-monomial lexicographically when $s_1>s_2>\dots>s_k$. We reorder $a_i$ similar to what is said in~\cite[Corollary 4.17]{ene2011gr}(whenever $\LT(a_i)$ and $\LT(a_j)$ for some $i<j$ involve the same basis element and $\LT(a_i)=ue_k$ and $\LT(a_j)=ve_k$, then $u>v$ with respect to pure lexicographic order). First of all by Buchberger's criterion these terms form a Gr\"{o}bner basis. On the other hand in each step of the resolution, $r_{ij}$ are elements of $\underline{s}$-monomial type and they form a Gr\"{o}bner basis with Schreyer's monomial order (actually here we don't have monomial order on variables and the orders are based on position). We should remark that reordering repeats in any step to remove one factor from the leading terms of $r_{ij}$. By induction on $k-t$ we show that if $a_i$ don't contain factors of $s_1,\dots,s_t$, then $p\le k-t$. If $t=k$, then the leading coefficients of elements of Gr\"{o}bner basis are units. Hence we have a Gr\"{o}bner basis with leading terms of $e_{i_1},\dots,e_{i_l}$, so that the syzygy module is zero. If $t<k$, then we label a Gr\"{o}bner basis as it is mentioned above. This implies that $\syz(a_1,\dots,a_n)$ has a Gr\"{o}bner basis such that $s_1,\dots,s_{t+1}$ do not appear in any of leading terms of the elements of Gr\"{o}bner basis. Thus by induction $\syz(a_1,\dots,a_n)$ has an $R$-resolution of length $\le k-t-1$. This completes the proof.
\end{proof}

\section{Gr\"{o}bner bases and syzygy theorem for a PID}

In this section $R$ is a PID. As we know if $a,b\in R$, then $\lcm(a,b)$ is unique up to unit. For every $a,b\in R$ we choose an arbitrary $\lcm(a,b)$. If $f,g\in F$ involve the same basis element we define
$$
S_R(f,g)=\lcm(\lc(f),\lc(g))S(f,g).
$$
Then $S_R(f,g)$ is unique up to unit and arbitrarily we choose one.

\begin{lemma}\label{pre-main-lemma1-pid}
Let $f_1,\dots,f_m\in F$. Suppose we have a sum $\sum_{i=1}^{m}c_if_i$, where $c_i\in R$ and for all $i$, $\LM(f_i)=x^{\delta}e_l, \delta\in \mathbb{Z}_{\ge 0}^{n}$. If $x^{\delta}e_l>\LT(\sum_{i=1}^{m}c_if_i)$, then $\sum_{i=1}^{m}c_if_i$ is an $R$-linear combination of the $S_R$-elements $S_R(f_i,f_j)$, for $1\le i<j \le m$. We also have $x^{\delta}e_l>\LM\left(S_R(f_i,f_j)\right)$.
\end{lemma}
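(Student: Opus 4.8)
The plan is to transcribe the proof of Lemma~\ref{pre-main-lemma1} to the PID setting, substituting for Corollary~\ref{syzygy} its analogue over a principal ideal domain. Set $a_i=\lc(f_i)$ for $i=1,\dots,m$. Since $\LM(f_i)=x^\delta e_l$ for every $i$, the coefficient of $x^\delta e_l$ in $\sum_{i=1}^m c_if_i$ equals $\sum_{i=1}^m c_ia_i$, and the hypothesis $x^\delta e_l>\LT(\sum_{i=1}^m c_if_i)$ forces $\sum_{i=1}^m c_ia_i=0$. Thus $(c_1,\dots,c_m)$ is a syzygy of $(a_1,\dots,a_m)$ in $R^m$.

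The key input is the classical fact that over a PID the module of syzygies of $(a_1,\dots,a_m)$ is generated by the pairwise relations $\tau_{ij}=\frac{\lcm(a_i,a_j)}{a_i}\,k_i-\frac{\lcm(a_i,a_j)}{a_j}\,k_j$ for $1\le i<j\le m$, where $k_1,\dots,k_m$ is the standard basis of $R^m$; this is the exact counterpart of Corollary~\ref{syzygy}. If it is not invoked as a cited statement it follows by a short induction on $m$: for $m=2$ the syzygy module is principal, generated by $\tau_{12}=\frac{a_2}{d}k_1-\frac{a_1}{d}k_2$ with $d=\gcd(a_1,a_2)$, and the inductive step reduces to this case after dividing out the global gcd and applying B\'ezout. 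Granting this, write $(c_1,\dots,c_m)=\sum_{i<j}d_{ij}\tau_{ij}$ with $d_{ij}\in R$ and apply the same identity to $f_1,\dots,f_m$ in place of $k_1,\dots,k_m$:
\[
\sum_{i=1}^m c_if_i=\sum_{1\le i<j\le m}d_{ij}\left(\frac{\lcm(a_i,a_j)}{a_i}f_i-\frac{\lcm(a_i,a_j)}{a_j}f_j\right).
\]

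It remains to recognize each summand as an $S_R$-element and to verify the leading-monomial bound. Because all the $f_i$ share the leading monomial $x^\delta e_l$, we have $\lcm(\lm(f_i),\lm(f_j))=x^\delta$, so $S(f_i,f_j)=\frac1{a_i}f_i-\frac1{a_j}f_j$ and therefore $S_R(f_i,f_j)=\lcm(a_i,a_j)S(f_i,f_j)=\frac{\lcm(a_i,a_j)}{a_i}f_i-\frac{\lcm(a_i,a_j)}{a_j}f_j$, which is exactly the parenthesized summand above up to a unit (the only ambiguity being the choice of $\lcm$). Hence $\sum_i c_if_i$ is an $R$-linear combination of the $S_R(f_i,f_j)$. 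For the last assertion, the coefficient of $x^\delta e_l$ in $S_R(f_i,f_j)$ is $\frac{\lcm(a_i,a_j)}{a_i}a_i-\frac{\lcm(a_i,a_j)}{a_j}a_j=0$, so that monomial cancels; every other monomial occurring in $f_i$ or $f_j$ is strictly below $x^\delta e_l$, whence $x^\delta e_l>\LM(S_R(f_i,f_j))$ (the case $S_R(f_i,f_j)=0$ being vacuous).

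I expect the only genuine obstacle to be the PID syzygy fact used above, namely that $\syz(a_1,\dots,a_m)$ is generated by the pairwise $\lcm$ relations $\tau_{ij}$; the rest is a routine adaptation of the $\underline{s}$-monomial argument, and working up to units causes no trouble since only leading monomials, not leading coefficients, are being controlled.
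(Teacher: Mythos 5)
Your proof is correct, but it follows a different route from the paper's. The paper disposes of Lemma~\ref{pre-main-lemma1-pid} by a reduction to Lemma~\ref{pre-main-lemma1}: since only finitely many $f_i$ occur, one picks non-associate irreducibles $q_1,\dots,q_t$ so that every $\lc(f_i)$ is a unit times a monomial in the $q_j$; a finite set of irreducibles in a PID is a permutable weak regular sequence, so the $f_i$ are $\underline{s}$-monomial type and $S'(f_i,f_j)=u_{ij}S_R(f_i,f_j)$ with $u_{ij}$ a unit, whence the statement is literally the already-cited lemma. You instead re-run the internal argument of that lemma in the PID setting: extract $\sum_i c_i\lc(f_i)=0$ from the hypothesis, use the pairwise-$\lcm$ generation of $\syz(\lc(f_1),\dots,\lc(f_m))$, push the coefficient identity through $k_i\mapsto f_i$, identify the summands with $S_R$-elements (using $\lcm(\lm(f_i),\lm(f_j))=x^\delta$), and check the cancellation of the coefficient of $x^\delta e_l$; all of these steps are sound, including the unit ambiguity remark. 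The trade-off is that the paper's proof is a two-line reduction needing no new syzygy statement, while yours is self-contained except precisely at your ``key input.'' There your fallback induction sketch (``divide out the global gcd and apply B\'ezout'') is too thin as stated -- the inductive step really needs an argument, e.g.\ splitting off the last coordinate via $d=\gcd(a_1,\dots,a_{m-1})$, a B\'ezout expression for $d$, and the identity $\gcd_{i}\lcm(a_i,a_m)=\lcm(d,a_m)$ -- but you do not need that sketch: the cleanest justification of the key input is the same observation the paper makes, namely apply Corollary~\ref{syzygy} to the $q$-monomial parts of the $\lc(f_i)$ (the irreducibles forming a permutable weak regular sequence), absorbing the unit factors into the $c_i$ and into the generators.
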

\begin{proof}
Because the number of $f_i$'s is finite we can choose a finite set of irreducible elements of $R$ such as $q_1,\dots,q_t$, such that the leading coefficients of all $f_i$ are units times a product of $q_i$. On the other hand a finite set of irreducible elements in a PID form a permutable weak regular sequence. If we fix $q_i$ as a permutable weak regular sequence, for each $f_i,f_j$ we have $S^{'}(f_i,f_j)=u_{ij}S_R(f_i,f_j)$, where $u_{ij}$ is a unit. Then the proof is clear.
\end{proof}

\begin{theorem}[Buchberger's criterion]\label{Grobner-basis-pid}
Let $G=\{f_1,...,f_m\}$ be a family of elements in $F$ and $U=\langle f_1,...,f_m\rangle$. Then $G$ is a Gr\"{o}bner basis for $U$ iff for every $i<j$, $S_R(f_i,f_j)\rightarrow_G 0$.
\end{theorem}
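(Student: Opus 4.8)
The plan is to reduce this statement to the version of Buchberger's criterion already established in the $\underline{s}$-monomial setting, Theorem~\ref{Grobner-basis}. The key observation is that over a PID every \emph{finite} family of elements of $F$ is automatically $\underline{s}$-monomial type for a suitable choice of $\underline{s}$: exactly as in the proof of Lemma~\ref{pre-main-lemma1-pid}, choose finitely many irreducible elements $q_1,\dots,q_t$ of $R$ so that each $\lc(f_i)$ is a unit times a product of the $q_\ell$. Then $q_1,\dots,q_t$ is a permutable weak regular sequence none of whose members is a unit, each $f_i$ is $\underline{q}$-monomial type, and for every $i<j$ one has $S^{'}(f_i,f_j)=u_{ij}\,S_R(f_i,f_j)$ with $u_{ij}$ a unit of $R$.

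First I would dispose of the forward implication, which uses nothing about $R$: if $G$ is a Gr\"obner basis of $U$, then each $S_R(f_i,f_j)$ is an $S$-linear combination of $f_i$ and $f_j$ and hence lies in $U$, so $S_R(f_i,f_j)\to_G 0$ by Lemma~\ref{groebner-generator}.

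For the converse, suppose $S_R(f_i,f_j)\to_G 0$ for all $i<j$. Multiplying an element of $F$, or a coefficient in $S$, by a unit of $R$ changes neither its leading monomial nor the set of monomials occurring in it (here we use that $R$, being a PID, is a domain); hence the reduction relation $\to_G$ is insensitive to unit multiples, and from $S_R(f_i,f_j)\to_G 0$ we obtain $S^{'}(f_i,f_j)=u_{ij}S_R(f_i,f_j)\to_G 0$ for all $i<j$. Since the $f_i$ are $\underline{q}$-monomial type, Theorem~\ref{Grobner-basis} applies verbatim and yields that $G$ is a Gr\"obner basis of $U$. Equivalently, one may bypass Theorem~\ref{Grobner-basis} and rerun its proof line by line with $S^{'}$ replaced by $S_R$ and Lemma~\ref{pre-main-lemma1} replaced by Lemma~\ref{pre-main-lemma1-pid}: given $f\in U$, choose a representation $f=\sum_i g_if_i$ minimizing $x^{\delta}e_l=\max_i \lmi(g_i)\LM(f_i)$; if $\LM(f)<x^{\delta}e_l$, then Lemma~\ref{pre-main-lemma1-pid} together with the standard expressions of the $S_R(f_i,f_j)$ coming from $S_R(f_i,f_j)\to_G 0$ produces a strictly smaller representation, contradicting minimality, so $\LT(f)\in\langle\LT(f_1),\dots,\LT(f_m)\rangle$ and $G$ is a Gr\"obner basis.

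The argument is short because the substantive work has already been carried out in Lemma~\ref{pre-main-lemma1-pid} and Theorem~\ref{Grobner-basis}. The only step requiring care — and the one I would flag as the main, if minor, obstacle — is the bookkeeping of the passage back to Section~3: one must pick the auxiliary regular sequence $q_1,\dots,q_t$ uniformly for the whole finite family $f_1,\dots,f_m$ (not depending on $i,j$), and then check that the discrepancy between $S_R(f_i,f_j)$ and $S^{'}(f_i,f_j)$ is a unit, so that all the leading-monomial inequalities used in Section~3 transfer to the present setting without modification.
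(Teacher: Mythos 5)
Your proof is correct and follows essentially the same route the paper intends: the paper states this theorem without a separate proof precisely because, as in Lemma~\ref{pre-main-lemma1-pid}, the leading coefficients factor through a finite permutable weak regular sequence of irreducibles, making each $S_R(f_i,f_j)$ a unit multiple of $S^{'}(f_i,f_j)$, after which the proof of Theorem~\ref{Grobner-basis} applies verbatim. Your reduction (including the observation that $\to_G 0$ is insensitive to unit factors) is exactly this argument.
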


Let $G=\{f_1,...,f_m\}$ be a family of elements in $F$ and they form a Gr\"{o}bner basis for a submodule $U$. We have 
$$
S_R(f_i,f_j)= u_{ij}f_i-u_{ji}f_i=q_{ij,1}f_1+\dots+q_{ij,m}f_m,
$$
where $\LM(S_R(f_i,f_j))\ge\lmi(q_{ij,l})\LM(f_l)$, $i<j$, $u_{ij}=\frac{\lcm(b_i,b_j)}{b_i}\frac{\lcm(u_i,u_j)}{u_i}$ ($b_i=\lc(f_i)$ and $u_i=\lm(f_i)$) and the leading terms of such $f_i$ and $f_j$ involve the same basis element.

Let $g_1,\dots,g_m$ be a basis for free module $L$ over $S$. We define
$$
r_{ij}=u_{ij}g_i-u_{ji}g_j-q_{ij,1}g_1-\dots-q_{ij,m}g_m.
$$

\begin{theorem}\label{syzygy-polynomial-pid}
With mentioned notation and condition above $r_{ij}$ ($1\le i<j\le m$), generate syzygies of $f_1,\dots,f_m$. 
\end{theorem}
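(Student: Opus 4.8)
The plan is to reduce the PID case to the already-established $\underline{s}$-monomial type case of Theorem~\ref{syzygy-polynomial}. Since there are only finitely many $f_i$, each with a single leading coefficient $b_i=\lc(f_i)\in R$, I would first factor each $b_i$ into irreducibles and collect a finite set of irreducible elements $q_1,\dots,q_t$ of $R$ so that every $b_i$ equals a unit times a product of powers of the $q_j$. As noted in the proof of Lemma~\ref{pre-main-lemma1-pid}, a finite set of irreducibles in a PID is a permutable weak regular sequence, and none of the $q_j$ is a unit, so $\underline{s}=q_1,\dots,q_t$ satisfies the standing hypotheses of Section~3. With respect to this $\underline{s}$, each leading coefficient $b_i$ is an $\underline{s}$-term, hence every $f_i$ is $\underline{s}$-monomial type in the sense of the earlier definition.

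Next I would match the two constructions. For $f_i,f_j$ whose leading terms involve the same basis element, the $S$-element $S(f_i,f_j)$ is intrinsic, so $S'(f_i,f_j)=\lcm(\lc(f_i),\lc(f_j))\,S(f_i,f_j)$ and $S_R(f_i,f_j)=\lcm(b_i,b_j)\,S(f_i,f_j)$ agree up to a unit: $S'(f_i,f_j)=u_{ij}S_R(f_i,f_j)$ with $u_{ij}\in R^\times$, because in a PID any two choices of $\lcm(b_i,b_j)$ (the canonical $\underline{s}$-monomial one from Section~3 and the arbitrary one fixed here) differ by a unit. Consequently a standard expression $S_R(f_i,f_j)=\sum_l q_{ij,l}f_l$ with $\LM(S_R(f_i,f_j))\ge\lmi(q_{ij,l})\LM(f_l)$ is, after multiplying by the unit $u_{ij}$, a standard expression for $S'(f_i,f_j)$; and the coefficients $u_{ij},u_{ji}$ of $f_i,f_j$ in $S_R(f_i,f_j)$ likewise differ from the ones occurring in $S'(f_i,f_j)$ only by that global unit. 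Hence the syzygy $r_{ij}=u_{ij}g_i-u_{ji}g_j-\sum_l q_{ij,l}g_l$ built here is a unit multiple of the corresponding generator produced in Theorem~\ref{syzygy-polynomial} for the sequence $\underline{s}=q_1,\dots,q_t$, so the $S$-submodule of $L$ they generate is unchanged.

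Finally I would invoke Theorem~\ref{syzygy-polynomial} directly: since $f_1,\dots,f_m$ are $\underline{s}$-monomial type and form a Gr\"obner basis (given), that theorem asserts the $r_{ij}$ generate $\syz(f_1,\dots,f_m)$, and by the matching in the previous paragraph the same holds for the $r_{ij}$ defined via $S_R$. This proves the claim.

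The only delicate point — the place I would be most careful about — is the bookkeeping in the second paragraph: one must check that replacing the ``canonical'' $\lcm$ chosen inside Section~3 (relative to the sequence $q_1,\dots,q_t$) by the arbitrary $\lcm$ fixed at the start of Section~4 really does amount to multiplication by a single unit, and that this unit can be absorbed without disturbing the inequality $\LM(S_R(f_i,f_j))\ge\lmi(q_{ij,l})\LM(f_l)$ (it can, since units do not change leading monomials). Everything else is a formal transfer of Theorem~\ref{syzygy-polynomial}. In particular no genuinely new argument about syzygies is needed; the work is entirely in setting up the permutable weak regular sequence and verifying the dictionary between $S'$ and $S_R$.
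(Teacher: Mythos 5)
Your proposal is correct and is essentially the paper's own argument: the paper likewise reduces to Theorem~\ref{syzygy-polynomial} by choosing finitely many irreducibles forming a permutable weak regular sequence (as in the proof of Lemma~\ref{pre-main-lemma1-pid}) and observing that the $r_{ij}$ defined via $S_R$ are associates of those arising from the $S^{'}$-elements. You simply spell out the unit-bookkeeping that the paper leaves implicit.
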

\begin{proof}
By the same reason explained in the proof of Lemma~\ref{pre-main-lemma1-pid}, $r_{ij}$ arising by $S^{'}$-elements generate syzygies of $U$. But the $r_{ij}$ mentioned above are associate to the  $r_{ij}$ arising from $S^{'}$-elements.
\end{proof}

\begin{corollary}\label{syzygy-monomial-pid}
Let $U$ be a submodule of $F$ generated by terms $f_1,\dots,f_m$. Then $\syz(f_1,\dots,f_m)$ is generated by the relations $r_{ij}=u_{ij}g_i-u_{ji}g_j$ for all $i<j$ for which $f_i$ and $f_j$ involve the same basis element.
\end{corollary}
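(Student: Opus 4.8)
The plan is to obtain this statement as an immediate corollary of Theorem~\ref{syzygy-polynomial-pid}, in exactly the way Corollary~\ref{syzygy-monomial} was obtained from Theorem~\ref{syzygy-polynomial}. Write each generator as a single term $f_i=b_iu_ie_{l_i}$ with $b_i=\lc(f_i)\in R$ and $u_i=\lm(f_i)$ a monomial of $S$. The first step is to observe that $G=\{f_1,\dots,f_m\}$ is already a Gr\"obner basis of $U$. Indeed, if $f_i$ and $f_j$ involve the same basis element, say $l_i=l_j=l$, then straight from the definition of the $S$-element,
$$
S(f_i,f_j)=\frac{\lcm(\lm(f_i),\lm(f_j))}{\lt(f_i)}f_i-\frac{\lcm(\lm(f_i),\lm(f_j))}{\lt(f_j)}f_j=\lcm(u_i,u_j)e_l-\lcm(u_i,u_j)e_l=0,
$$
hence $S_R(f_i,f_j)=\lcm(\lc(f_i),\lc(f_j))S(f_i,f_j)=0$, which reduces to zero modulo $G$ trivially; and for $i<j$ with $f_i,f_j$ involving different basis elements the element $S_R(f_i,f_j)$ is not defined, so there is nothing to verify. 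By Buchberger's criterion (Theorem~\ref{Grobner-basis-pid}), $G$ is a Gr\"obner basis for $U$.

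The second step is to feed this into Theorem~\ref{syzygy-polynomial-pid}: the relations $r_{ij}=u_{ij}g_i-u_{ji}g_j-q_{ij,1}g_1-\dots-q_{ij,m}g_m$, taken over those $i<j$ for which $f_i$ and $f_j$ involve the same basis element, generate $\syz(f_1,\dots,f_m)$. For each such pair $S_R(f_i,f_j)=0$, so its standard expression $\sum_l q_{ij,l}f_l$ may be taken to be the trivial one, $q_{ij,l}=0$ for all $l$; therefore $r_{ij}=u_{ij}g_i-u_{ji}g_j$, which is exactly the asserted generating set. (As a sanity check, these are visibly syzygies, since $u_{ij}f_i=\lcm(b_i,b_j)\lcm(u_i,u_j)\,e_l=u_{ji}f_j$.)

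I do not expect a genuine obstacle here. The only point that needs a moment's care is the computation $S(f_i,f_j)=0$ for two terms sharing a basis element, i.e.\ the fact that dividing a term by its own leading term returns the corresponding standard basis vector; once that is in hand everything else is a formal consequence of the PID versions of Buchberger's criterion and the syzygy theorem already established above.
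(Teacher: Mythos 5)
Your proposal is correct and follows essentially the same route as the paper: the paper states this corollary without a separate proof, but its proof of the analogous Corollary~\ref{syzygy-monomial} is exactly your argument --- the $S$-elements (here $S_R$-elements) of two terms sharing a basis element vanish, so the terms form a Gr\"obner basis, and Theorem~\ref{syzygy-polynomial-pid} then yields the generators $r_{ij}=u_{ij}g_i-u_{ji}g_j$ since the standard expressions may be taken with all $q_{ij,l}=0$. Your computation $S(f_i,f_j)=0$ and the sanity check $u_{ij}f_i=u_{ji}f_j$ are both accurate, so there is nothing to add.
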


\begin{theorem}\label{syzygy-grobner-pid}
Let $U$ be a submodule of $F$ with Gr\"{o}bner basis $G=\{f_1,\dots,f_m\}$. Then the relations $r_{ij}$ arising from the $S_R$-elements of the $f_i$ and $f_j$ form a Gr\"{o}bner basis of $\syz(f_1,\dots,f_m)$ with respect to Schreyer monomial order $<_{f_1,\dots,f_m}$. Moreover with this order we have $\LT(r_{ij})=u_{ij}g_i$.
\end{theorem}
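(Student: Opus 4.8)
The plan is to reduce to the already-established $\underline{s}$-monomial type version, Theorem~\ref{syzygy-grobner}, exactly as was done for Theorem~\ref{syzygy-polynomial-pid}. Since the Gr\"obner basis $G=\{f_1,\dots,f_m\}$ is finite, only finitely many elements of $R$ occur among the leading coefficients $\lc(f_1),\dots,\lc(f_m)$; because $R$ is a PID (hence a UFD) I would pick finitely many pairwise non-associate irreducibles $q_1,\dots,q_t$ so that each $\lc(f_i)$ is a unit times a product of powers of the $q_l$. As recalled in the proof of Lemma~\ref{pre-main-lemma1-pid}, $\underline{s}=q_1,\dots,q_t$ is then a permutable weak regular sequence in $R$, and with respect to it every $f_i$ is $\underline{s}$-monomial type.

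Next I would observe that this choice changes none of the data in the statement. The Schreyer monomial order $<_{f_1,\dots,f_m}$ on $L$ depends only on $f_1,\dots,f_m$ and the fixed order on $F$, not on $\underline{s}$, so it is literally the same order as the one in Theorem~\ref{syzygy-grobner}. Moreover, for each pair $i<j$ whose leading terms involve the same basis element, $S_R(f_i,f_j)$ and $S'(f_i,f_j)$ differ only by a unit of $R$, since both equal $\lcm(\lc(f_i),\lc(f_j))\,S(f_i,f_j)$ and the two choices of $\lcm$ agree up to a unit; a standard expression of one therefore yields, after scaling, a standard expression of the other. Consequently the relation $r_{ij}$ defined in the present section is an associate of the relation $r_{ij}$ produced by Theorem~\ref{syzygy-grobner}.

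With these identifications, Theorem~\ref{syzygy-grobner} applies directly: the associates $r_{ij}$ form a Gr\"obner basis of $\syz(f_1,\dots,f_m)$ for $<_{f_1,\dots,f_m}$, and multiplying each generator by a unit changes neither the submodule $\langle \LT(r_{ij})\ :\ i<j\rangle$ nor the fact that $\LT(r_{ij})=u_{ij}g_i$ (with the $u_{ij}$ here differing from the $\underline{s}$-version only by a unit). This gives both assertions.

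I expect the only real subtlety — and precisely the reason the reduction works — to be the finiteness of $G$: it is what lets us fold every leading coefficient into one ambient permutable weak regular sequence, so that a general PID, which need not carry a global regular sequence controlling all of $R$, behaves for this single computation exactly like the $\underline{s}$-monomial setting of Section~3. If a self-contained argument is preferred, the proof of Theorem~\ref{syzygy-grobner} can instead be transcribed verbatim, replacing $S'$ by $S_R$, Corollary~\ref{syzygy-monomial} by Corollary~\ref{syzygy-monomial-pid}, and Lemma~\ref{pre-main-lemma1} by Lemma~\ref{pre-main-lemma1-pid}; the key points are unchanged, namely that $\lmi(u_{ij})\LM(f_i)=\lmi(u_{ji})\LM(f_j)$ together with $i<j$ forces $\LT(r_{ij})=u_{ij}g_i$ by the position tie-break, that the tail terms $q_{ij,l}g_l$ are Schreyer-smaller because $\LM(S_R(f_i,f_j))$ drops strictly below $\lmi(u_{ij})\LM(f_i)$, and that an arbitrary syzygy, after its Schreyer-leading contribution is rewritten via Corollary~\ref{syzygy-monomial-pid}, has leading term lying in $\langle u_{ij}g_i\rangle$.
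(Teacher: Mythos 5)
Your proposal is correct and follows essentially the route the paper intends: the paper states this theorem without a written proof, relying on exactly the reduction you make explicit (and already used for Lemma~\ref{pre-main-lemma1-pid} and Theorem~\ref{syzygy-polynomial-pid}), namely that the finitely many leading coefficients factor through a permutable weak regular sequence of pairwise non-associate irreducibles, so that each $S_R(f_i,f_j)$ and hence each relation $r_{ij}$ is a unit multiple of its $S^{'}$-counterpart and Theorem~\ref{syzygy-grobner} applies verbatim with the same Schreyer order. Your alternative of transcribing the proof of Theorem~\ref{syzygy-grobner} with Corollary~\ref{syzygy-monomial-pid} and Lemma~\ref{pre-main-lemma1-pid} substituted is likewise consistent with the paper's treatment, so nothing further is needed.
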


\begin{theorem}[Hilbert's syzygy theorem for PID]\label{hilbert-pid}
Let $M$ be a finitely generated $S$-module. Then $M$ admits a free $S$-resolution 
$$
0\to F_p\to F_{p-1}\dots\to F_1\to F_0\to M\to 0
$$
of length $p\le n+1$.
\end{theorem}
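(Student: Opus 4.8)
The plan is to mimic Schreyer's proof of the classical Hilbert syzygy theorem, iterating the syzygy construction of Theorem~\ref{syzygy-grobner-pid} and then truncating the resulting resolution by an argument that uses that $R$ is a principal ideal domain (this truncation is forced: $S=R[x_1,\dots,x_n]$ has global dimension $n+1$ when $R$ is a PID). First, since $M$ is finitely generated, write $M\cong F_0/U_0$ with $F_0$ a finitely generated free $S$-module. As $R$ is Noetherian, so is $S$, hence $U_0$ is finitely generated and, after fixing a monomial order on $F_0$, admits a Gr\"obner basis $G_0=\{f^{(0)}_1,\dots,f^{(0)}_{m_0}\}$. Let $L_1=S^{m_0}$ and let $L_1\to F_0$ send the standard basis to $G_0$; its image is $U_0$ and its kernel is $U_1:=\syz(G_0)$. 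Iterating, at the $t$-th stage one obtains a map $L_t\to L_{t-1}$ with image $U_{t-1}$ and kernel $U_t=\syz(G_{t-1})$, where $G_{t-1}$ is a Gr\"obner basis of $U_{t-1}$; by Theorem~\ref{syzygy-grobner-pid} the relations $r_{ij}$ constitute a Gr\"obner basis $G_t$ of $U_t$ for the associated Schreyer order, with $\LT(r_{ij})=u_{ij}g_i$ and $u_{ij}=\frac{\lcm(b_i,b_j)}{b_i}\cdot\frac{\lcm(u_i,u_j)}{u_i}$ in the notation of that theorem. This yields an exact complex $\cdots\to L_2\to L_1\to F_0\to M\to 0$ of finitely generated free $S$-modules.

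The key point is a variable-dropping step. Before passing to syzygies at each stage, relabel the Gr\"obner basis as in~\cite[Corollary~4.17]{ene2011gr} (the same device as in the proof of the preceding theorem): among Gr\"obner basis elements whose leading terms involve a common basis vector, list them so that the $x$-parts $u_i=\lm(f_i)$ of their leading monomials are non-increasing in the pure lexicographic order with $x_1>\cdots>x_n$. Then for every relevant pair $i<j$ the factor $\frac{\lcm(u_i,u_j)}{u_i}$ — the $x$-part of $\LT(r_{ij})$ — is not divisible by $x_1$. Hence if the leading terms of $G_{t-1}$ already involve none of $x_1,\dots,x_{t-1}$, those of $G_t$ involve none of $x_1,\dots,x_t$; as the relabelling and the Schreyer order propagate through the iteration, after $n$ stages every leading term of $G_n$ lies in $\bigoplus_i Rg_i$, i.e.\ involves no variable at all.

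It then remains to show that a submodule $V$ of a finitely generated free $S$-module that has a Gr\"obner basis with all leading terms in $\bigoplus_i Rg_i$ is free; granting this for $V=U_n$, one takes $L_{n+1}$ to be a free module mapped isomorphically onto $U_n$ and composes with the inclusion $U_n\hookrightarrow L_n$ to obtain a free resolution $0\to L_{n+1}\to L_n\to\cdots\to L_1\to F_0\to M\to 0$ of length $n+1$. For the freeness: since $R$ is a domain, $\LM(cf)=\lmi(c)\LM(f)$ for $c\in S$, $f\in F$, so for each basis vector $g_k$ the set $J_k=\{\lc(h):h\in V,\ \LM(h)=g_k\}\cup\{0\}$ is an ideal of $R$; writing $J_k=(d_k)$, pick $\tilde h_k\in V$ with $\LT(\tilde h_k)=d_kg_k$. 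A comparison of leading terms, using that elements of $R$ have $x$-degree $0$, shows $\LT(V)=\bigoplus_k Sd_kg_k$, so $\{\tilde h_k\}$ is again a Gr\"obner basis of $V$, this time with pairwise distinct leading basis vectors. Finally, if $\sum_k\lambda_k\tilde h_k=0$ with the $\lambda_k\in S$ not all zero, the monomials $\lmi(\lambda_k)g_k$ (over those $k$ with $\lambda_k\neq 0$) are pairwise distinct, so one of them is strictly largest and survives in the sum, forcing $\LM(\sum_k\lambda_k\tilde h_k)\neq 0$ — a contradiction. Hence $V$ carries no nonzero relation among the $\tilde h_k$ and is free.

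I expect the variable-dropping step to be the main obstacle: one must check carefully that the relabelling of~\cite[Corollary~4.17]{ene2011gr} stays compatible with the iterated Schreyer orders in the present setting, where the leading coefficients are no longer units, so that exactly one variable is eliminated from the leading terms at each of the first $n$ stages. Once that is in hand, the concluding freeness argument is routine, resting only on the observation that the $J_k$ are ideals of the PID $R$ and on $R$ being a domain.
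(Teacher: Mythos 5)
Your proposal is correct and follows essentially the same route as the paper: iterate the Schreyer-type syzygy theorem (Theorem~\ref{syzygy-grobner-pid}) with the relabelling of \cite[Corollary 4.17]{ene2011gr} to eliminate one variable from the leading terms at each stage, then use the PID structure when no variables remain. Your final freeness argument via the ideals $J_k$ of leading coefficients is just a repackaging of the paper's base case, which forms B\'ezout combinations $g_1,\dots,g_r$ with gcd leading coefficients and distinct leading basis vectors and concludes that their syzygy module vanishes.
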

\begin{proof}
We know that $M\cong F/U$, where $F$ is a finitely generated free $S$-module. It is enough to prove that $U$ has a free $S$-resolution of length $\le n$. Suppose $e_1,\dots,e_r$ is the basis of $F$ and $f_1,\dots,f_m$ is a Gr\"{o}bner basis of $U$. If $t$ is the largest integer such that the variables $x_1,\dots,x_t$ don't appear in any of the leading terms of $f_i$, then by induction on $n-t$ we prove that $U$ has a free $S$-resolution of length $\le n-t$. If $t=n$, we consider all $f_i$ (if they exist) in which the leading terms involve the basis element $e_1$. If these $f_i$ are $f_{i_1},\dots,f_{i_s}$, then 
$$
\exists r_1,\dots,r_s; r_1\lc(f_{i_1})+\dots r_s\lc(f_{i_s})=\gcd(\lc(f_{i_1}),\dots,\lc(f_{i_s})).
$$
Let $g_1=r_1f_{i_1}+\dots+r_sf_{i_s}$. Now we consider all $f_i$ (if they exist)
whose leading terms involve the basis element $e_2$ and by the same procedure we obtain $g_2$, so that in the maximum case $g_1,\dots,g_r$ form a Gr\"{o}bner basis for $U$ with the same order and its syzygies is zero module.
 
If $t<n$, we assume that the Gr\"{o}bner basis of $f_1,\dots,f_m$ is labeled as described in \cite[Corollary 4.17]{ene2011gr}. Hence by Theorem~\ref{syzygy-grobner-pid} and \cite[Corollary 4.17]{ene2011gr}, $\syz(f_1,\dots,f_m)$ has a Gr\"{o}bner basis such that variables $x_1,\dots,x_{t+1}$ do not appear in any of the leading monomials of the elements of the Gr\"{o}bner basis. Then by induction $\syz(f_1,\dots,f_m)$ has a free $S$-resolution of length $\le n-t-1$. This completes the proof.
\end{proof}

\section{The case of quotient of a PID}
In this section we fix the quotient ring $R/NR$, where $R$ is a PID. We know that we can factor $N=p_1^{n_1}\dots p_k^{n_k}$, where $p_i$ are irreducible elements of $R$. For every $\alpha_i>n_i$, we have 
\begin{gather*}
p_i^{\alpha_i}+NR=p_i^{\alpha_i}+p_1^{n_1}\dots p_k^{n_k}+NR=p_i^{n_i}(p_i^{\alpha_i-n_i}+p_1^{n_1}\dots p_{i-1}^{n_{i-1}}p_{i+1}^{n_{i+1}}\dots p_k^{n_k})+NR.
\end{gather*}
But $\gcd(p_i^{\alpha_i-n_i}+p_1^{n_1}\dots p_{i-1}^{n_{i-1}}p_{i+1}^{n_{i+1}}\dots p_k^{n_k},N)=1$. Hence $p_i^{\alpha_i-n_i}+p_1^{n_1}\dots p_{i-1}^{n_{i-1}}p_{i+1}^{n_{i+1}}\dots p_k^{n_k}+NR$ is a unit. Then all elements of $R/NR$ have a presentation of the 
form $up_1^{\alpha_1}\dots p_k^{\alpha_k}+NR$, where $u+NR$ is a unit and $\alpha_i\le n_i$. On the other hand this presentation is unique because if $up_1^{\alpha_1}\dots p_k^{\alpha_k}+NR=vp_1^{\beta_1}\dots p_k^{\beta_k}+NR$ ($0\le \alpha_i,\beta_i\le n_i$), then $up_1^{\alpha_1}\dots p_k^{\alpha_k}-vp_1^{\beta_1}\dots p_k^{\beta_k}=lp_1^{n_1}\dots p_k^{n_k}$. Without loss of generality we may assume $\alpha_1<\beta_1$. Hence $up_2^{\alpha_2}\dots p_k^{\alpha_k}-vp_1^{\gamma_1}p_2^{\beta_2}\dots p_k^{\beta_k}=lp_1^{\eta_1}p_2^{n_2}\dots p_k^{n_k}$, $\gamma_1,\eta_1>0$. Then $p_1$ divides $up_2^{\alpha_2}\dots p_k^{\alpha_k}$, which is a contradiction.
 
We define $S_{R/NR}$-element of two elements $f,g\in F$ (the base ring is $S=(R/NR)[x_1,\dots,x_n]$). Let the leading terms of $f,g$ involve the same basis element, $\lc(f)=a+NR$ and $\lc(g)=b+NR$, then

$$
S_{R/NR}(f,g)=(\frac{\lcm(a,b)}{a}+NR)\frac{\lcm(\lm(f),\lm(g))}{\lm(f)}f-(\frac{\lcm(a,b)}{b}+NR)\frac{\lcm(\lm(f),\lm(g))}{\lm(g)}g.
$$
We should remark that $S_{R/NR}(f,g)$ is unique up to unit. If $\lc(f)=a+NR$, then we set $\ann(\lc(f))=\frac{N}{a}+NR$. Note that $\ann(\lc(f))$ is unique up to unit. Now we state a similar result to Lemma~\ref{pre-main-lemma1}.

\begin{lemma}\label{pre-main-lemma1-quotient}
Let $f_1,\dots,f_m\in F$. Suppose we have a sum $\sum_{i=1}^{m}c_if_i$, where $c_i\in R/NR$ and for all $i$, $\LM(f_i)=x^{\delta}e_l, \delta\in \mathbb{Z}_{\ge 0}^{n}$. If $x^{\delta}e_l>\LT(\sum_{i=1}^{m}c_if_i)$, then if $m=1$, $c_1f_1=a\ann(\lc(f_1))f_1$ and if $m>1$, then $\sum_{i=1}^{m}c_if_i$ is an $R$-linear combination of the $S_{R/NR}$-elements $S_{R/NR}(f_i,f_j)$, for $1\le i<j \le m$. We also have $x^{\delta}e_l>\LM\left(S_{R/NR}(f_i,f_j)\right)$ and $x^{\delta}e_l>\LM(\ann(\lc(f_i))f_i)$.
\end{lemma}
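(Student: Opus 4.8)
The strategy is to reduce everything to the structure of the syzygy module of the leading coefficients over $R/NR$ and then substitute back, exactly in the spirit of the proof of Lemma~\ref{pre-main-lemma1} and of Buchberger's criterion. Since $\LM(f_i)=x^\delta e_l$ for every $i$, the leading term of $\sum_{i=1}^{m}c_if_i$ equals $\bigl(\sum_{i=1}^{m}c_i\lc(f_i)\bigr)x^\delta e_l$, so the hypothesis $x^\delta e_l>\LT(\sum c_if_i)$ says precisely that $\sum_{i=1}^{m}c_i\lc(f_i)=0$ in $R/NR$; that is, $(c_1,\dots,c_m)$ lies in $\syz_{R/NR}(\lc(f_1),\dots,\lc(f_m))$. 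Write $\lc(f_i)=a_i+NR$, where by the normal form established at the start of this section $a_i\mid N$ and $a_i$ is a unit times an $\underline{s}$-monomial for $\underline{s}=p_1,\dots,p_k$.

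First I would dispose of the case $m=1$. Then $\syz_{R/NR}(a_1)=\ann(a_1)(R/NR)$, so $c_1=a\,\ann(\lc(f_1))$ for some $a\in R/NR$, whence $c_1f_1=a\,\ann(\lc(f_1))f_1$; and since $\ann(a_1)a_1=0$ in $R/NR$, the leading term of $\ann(\lc(f_1))f_1$ cancels, giving $x^\delta e_l>\LM(\ann(\lc(f_1))f_1)$.

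For $m>1$ the key point is to identify generators of $\syz_{R/NR}(a_1,\dots,a_m)$, which I would obtain from Corollary~\ref{syzygy} by a lifting argument. Lift each $c_i$ to $\tilde c_i\in R$; then $\sum\tilde c_ia_i=dN$ for some $d\in R$, so $(\tilde c_1,\dots,\tilde c_m,-d)\in\syz_R(a_1,\dots,a_m,N)$. Since $N=p_1^{n_1}\cdots p_k^{n_k}$ and $a_1,\dots,a_m,N$ are (units times) $\underline{s}$-monomials, Corollary~\ref{syzygy} applies to this augmented list: $\syz_R(a_1,\dots,a_m,N)$ is generated by the $\lcm$-relations, namely $\sigma_{ij}=\tfrac{\lcm(a_i,a_j)}{a_i}e_i-\tfrac{\lcm(a_i,a_j)}{a_j}e_j$ for $1\le i<j\le m$, together with the relations involving $N$, which because $a_i\mid N$ (so $\lcm(a_i,N)=N$) reduce to $\tfrac{N}{a_i}e_i-e_{m+1}$. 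Projecting onto the first $m$ coordinates and reducing mod $N$, one concludes that $(c_1,\dots,c_m)$ is an $R/NR$-linear combination of the reductions of the $\sigma_{ij}$ and of the elements $\tau_i=\ann(a_i)e_i$. Now substitute $e_i\mapsto f_i$: because $\lm(f_i)=x^\delta$ for all $i$, the monomial multipliers appearing in $S_{R/NR}(f_i,f_j)$ are all $1$, so this substitution sends $\sigma_{ij}$ to $S_{R/NR}(f_i,f_j)$ and $\tau_i$ to $\ann(\lc(f_i))f_i$, yielding $\sum_{i=1}^{m}c_if_i=\sum_{i<j}\lambda_{ij}S_{R/NR}(f_i,f_j)+\sum_i\mu_i\,\ann(\lc(f_i))f_i$ with $\lambda_{ij},\mu_i\in R/NR$; in particular the expansion naturally also carries the annihilator terms, which is what the last inequality in the statement is recording. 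Finally the leading-monomial bounds follow from cancellation of the top terms: $\tfrac{\lcm(a_i,a_j)}{a_i}a_i=\lcm(a_i,a_j)=\tfrac{\lcm(a_i,a_j)}{a_j}a_j$ forces $x^\delta e_l>\LM(S_{R/NR}(f_i,f_j))$, and $\ann(a_i)a_i=0$ forces $x^\delta e_l>\LM(\ann(\lc(f_i))f_i)$.

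The step I expect to be the main obstacle is the bookkeeping in the passage from $R$ to $R/NR$ when $m>1$: one must verify that applying Corollary~\ref{syzygy} to the augmented sequence $a_1,\dots,a_m,N$ and then projecting really does return exactly the relations $\sigma_{ij}$ and $\tau_i$ and nothing spurious, that the units occurring in the $\lc(f_i)$ (as opposed to their canonical $\underline{s}$-monomial representatives) are harmless up to a diagonal rescaling, and that the identity $\lcm(a_i,N)=N$ is used correctly so that the $N$-relations collapse precisely to the annihilator relations. Everything else is the same manipulation over a commutative ring already carried out for Lemma~\ref{pre-main-lemma1}.
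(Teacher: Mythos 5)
Your proposal is correct and follows essentially the same route as the paper: choose representatives in $R$, adjoin $N$ as an extra coefficient (the paper's $d_{m+1}=N$), invoke Corollary~\ref{syzygy} (equivalently the mechanism of Lemma~\ref{pre-main-lemma1}) for the irreducible factors $p_1,\dots,p_k$ of $N$, and reduce modulo $N$; your remarks about the units in the $\lc(f_i)$ and about $\lcm(a_i,N)=N$ are exactly the bookkeeping the paper leaves implicit. The one place you diverge is the final shape of the decomposition: you keep the contributions of the pairs $(i,m+1)$, so you conclude that $\sum_i c_if_i$ is a combination of the $S_{R/NR}(f_i,f_j)$ \emph{and} the $\ann(\lc(f_i))f_i$, whereas the lemma as printed (and the last step of the paper's proof, which tries to merge the $j=m$ and $j=m+1$ terms into multiples of $\lcm(d_i,d_m)$) asserts that the $S_{R/NR}$-elements alone suffice when $m>1$. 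Your version is the one that is actually provable and needed: the annihilator terms cannot in general be dropped (for $R/NR=\mathbb{Z}/4\mathbb{Z}$, $f_1=2x+1$, $f_2=2x+y$, $c_1=c_2=1$, the sum $1+y$ is not a multiple of $S_{R/NR}(f_1,f_2)=1-y$, but it equals $S_{R/NR}(f_1,f_2)+\ann(\lc(f_2))f_2$), and it is the form used downstream: Buchberger's criterion (Theorem~\ref{Grobner-basis-quotient}) requires $\ann(\lc(f_i))f_i\to_G 0$ in addition to the $S_{R/NR}$-reductions, the statement itself bounds $\LM(\ann(\lc(f_i))f_i)$, and the general Lemma~\ref{pre-main-lemma1-general} includes the annihilator elements explicitly. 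So there is no gap on your side; just be aware that you have proved the annihilator-included formulation rather than the literal wording of the $m>1$ clause.
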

\begin{proof}
For the case of $m=1$ the claim is clear so we prove the case $m>1$. For each $f_i$ we consider its representative (one of them) in $(R[x_1,\dots,x_n])^r$ and denote it by $g_i$. Now if the leading coefficient of $g_i$ is $u_ip_1^{\alpha_{1,i}}\dots p_k^{\alpha_{k,i}}$, $0\le\alpha_{j,i}\le n_i$, then $\sum_{i=1}^m c_iu_ip_1^{\alpha_{1,i}}\dots p_k^{\alpha_{k,i}}=Nl$. Let $d_i=p_1^{\alpha_{1,i}}\dots p_k^{\alpha_{k,i}}$ and $d_{m+1}=N$. We define $p_i=\frac{g_i}{\lc(g_i)}$ and we have $\sum_{i=1}^{m}c_ig_i=\sum_{i=1}^{m-1}c_iu_id_iS(g_i,g_m)$.

By the same argument as in the proof of Lemma~\ref{pre-main-lemma1},
$$
\sum_{i=1}^{m}c_ig_i=\sum_{i=1}^{m-1}\sum_{j=1}^{m+1}r_{i,j}\lcm(d_i,d_j)S(g_i,g_m).
$$
In the sum above 
$$
r_{i,m}\lcm(d_i,d_m)+r_{i,m+1}\lcm(d_i,d_{m+1})=a\lcm(d_i,d_m), \ a\in R,
$$
so that $\sum_{i=1}^{m}c_if_i$ is a $R$-linear combination of $S_{R/NR}(f_i,f_j)$.
\end{proof}

\begin{theorem}[Buchberger's criterion]\label{Grobner-basis-quotient}
Let $G=\{f_1,...,f_m\}$ be a family of elements in $F$ and $U=\langle f_1,...,f_m\rangle$. Then $G$ is a Gr\"{o}bner basis for $U$ iff for each $i$, $\ann(\lc(f_i))f_i\to_G0$ and for every $i<j$, $S_{R/NR}(f_i,f_j)\rightarrow_G 0$.
\end{theorem}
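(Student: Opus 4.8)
The plan is to mimic the proof of Buchberger's criterion (Theorem~\ref{Grobner-basis}), using Lemma~\ref{pre-main-lemma1-quotient} in place of Lemma~\ref{pre-main-lemma1}. The only new ingredient compared with the $\underline{s}$-monomial case is that now a single leading term $\lc(f_i)u$ can be cancelled not only by pairing with another $f_j$ but also by multiplying $f_i$ by an annihilator, since $R/NR$ has zero divisors; this is exactly why the extra reduction condition $\ann(\lc(f_i))f_i\to_G 0$ appears in the statement.

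First I would handle the easy direction: if $G$ is a Gr\"{o}bner basis then every element of $U$ reduces to zero modulo $G$ by Lemma~\ref{groebner-generator}, and since $\ann(\lc(f_i))f_i$ and $S_{R/NR}(f_i,f_j)$ both lie in $U$ (the latter by construction, the former because it is a scalar multiple of $f_i$), both reduce to zero. For the converse, assume the two reduction hypotheses hold and take $f\in U$. As in the proof of Theorem~\ref{Grobner-basis}, among all representations $f=\sum_{i=1}^m g_if_i$ with $g_i\in S$ choose one for which $x^\delta e_l=\max_i\bigl(\lmi(g_i)\LM(f_i)\bigr)$ is minimal. If $\LM(f)=x^\delta e_l$ we are done, since then $\LT(f)\in\langle\LT(f_1),\dots,\LT(f_m)\rangle$. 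Otherwise $x^\delta e_l>\LT(f)$, and splitting off the leading terms $\lti(g_i)f_i$ of the indices $i$ achieving the maximum, the coefficient sum $\sum c_i x^{\alpha(i)}f_i$ (with $c_i=\lci(g_i)$, all sharing leading monomial $x^\delta e_l$) has leading term strictly below $x^\delta e_l$.

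Now I apply Lemma~\ref{pre-main-lemma1-quotient} to this subsum: it expresses $\sum c_i x^{\alpha(i)}f_i$ as an $R$-linear combination of the elements $S_{R/NR}(x^{\alpha(i)}f_i,x^{\alpha(j)}f_j)$ and $\ann(\lc(f_i))x^{\alpha(i)}f_i$, each of whose leading monomials is strictly less than $x^\delta e_l$. Using the scaling identities $S_{R/NR}(x^{\alpha(i)}f_i,x^{\alpha(j)}f_j)=x^{\delta-\gamma_{ij}}S_{R/NR}(f_i,f_j)$ and $\ann(\lc(f_i))x^{\alpha(i)}f_i=x^{\alpha(i)}\bigl(\ann(\lc(f_i))f_i\bigr)$ (exactly parallel to the computation in Theorem~\ref{Grobner-basis}), and then substituting the standard expressions coming from $S_{R/NR}(f_i,f_j)\to_G 0$ and $\ann(\lc(f_i))f_i\to_G 0$, one rewrites the whole subsum as $\sum \overline{h}_t f_t$ with $\lmi(\overline{h}_t)\LM(f_t)<x^\delta e_l$ for every $t$. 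Folding this back into the representation of $f$ produces a new representation whose maximum is strictly smaller than $x^\delta e_l$, contradicting minimality. Hence $\LM(f)=x^\delta e_l$ after all, proving $\LT(U)\subseteq\langle\LT(f_1),\dots,\LT(f_m)\rangle$; the reverse containment is automatic, so $G$ is a Gr\"{o}bner basis.

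The step I expect to require the most care is the bookkeeping in the reduction argument: one must verify that \emph{all} the leading-monomial inequalities are strict after the substitutions, both for the $S_{R/NR}$ terms and for the new annihilator terms, so that the newly constructed representation genuinely has a smaller maximum. This is the same subtlety that appears at the end of the proof of Theorem~\ref{Grobner-basis}, but here there is the additional family of annihilator summands to track; since Lemma~\ref{pre-main-lemma1-quotient} already guarantees $x^\delta e_l>\LM(\ann(\lc(f_i))f_i)$, and multiplying by the monomial $x^{\alpha(i)}$ preserves the strict inequality after scaling by $x^\delta e_l / x^{\alpha(i)}$, this goes through without essential difficulty.
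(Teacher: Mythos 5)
Your proposal is correct and follows essentially the same route as the paper: choose a representation of $f\in U$ minimizing $x^\delta e_l$, split off the top-degree part, apply Lemma~\ref{pre-main-lemma1-quotient}, rescale, substitute the standard expressions coming from the hypotheses $S_{R/NR}(f_i,f_j)\to_G 0$ and $\ann(\lc(f_i))f_i\to_G 0$, and contradict minimality. The only cosmetic difference is that the paper's Lemma~\ref{pre-main-lemma1-quotient} produces only $S_{R/NR}$-elements when the top-degree subsum has more than one summand (the annihilator term arises solely in the single-summand case, which the paper treats by an explicit case split), a detail your phrasing slightly blurs but which does not affect the argument.
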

\begin{proof}
Let $f\in U$, we consider all possible ways that $f=\sum_{i=1}^{m}g_if_i$, where $g_i\in S$, and if $x^{u(i)}e_{u(i)}=\lmi(g_i)\LM(f_i)$ and $x^\delta e_l=\max (x^{u(1)}e_{u(1)},\dots,x^{u(m)}e_{u(m)})$, so $x^\delta e_l$ is minimal. Clearly $x^{\delta}e_l\ge \LT(f)$. If $\LM(f)=x^{\delta}e_l$, then $\LT(f)\in \langle \LT(f_i)\rangle_{i=1}^{m}$, if not, then $x^{\delta}e_l>\LT(f)$. Then we can write
\begin{gather*}
f=\sum_{x^{u(i)}e_{u(i)}=x^\delta e_l}g_if_i + \sum_{x^{u(i)}e_{u(i)}<x^\delta e_l}g_if_i\\
= \sum_{x^{u(i)}e_{u(i)}=x^\delta e_l}\lti(g_i)f_i+\sum_{x^{u(i)}e_{u(i)}=x^\delta e_l}(g_i-\lti(g_i))f_i+\sum_{x^{u(i)}e_{u(i)}<x^\delta e_l}g_if_i.
\end{gather*}
The $x^\delta e_l$ is greater than all monomials appearing in the forth and fifth sums. Then the assumption $x^\delta e_l>\LT(f)$, means that the $x^\delta e_l$ is also greater than leading term of the third sum. On the other hand in the third sum if we have only one summand say $\lti(g_i)f_i$, then $\lti(g_i)f_i=a\ann(\lc(f_i))f_i$ and by the assumption the claim is clear. If we have more than one summand, then the proof is similar to Theorem~\ref{Grobner-basis}.
\end{proof}

Let $G=\{f_1,...,f_m\}$ be a family of elements in $F$ that form a Gr\"{o}bner basis for a submodule $U$. We have 
$$
S_{R/NR}(f_i,f_j)= u_{ij}f_i-u_{ji}f_i=q_{ij,1}f_1+\dots+q_{ij,m}f_m,
$$
where $\LM(S_{R/NR}(f_i,f_j))\ge\lmi(q_{ij,l})\LM(f_l)$, $i<j$, $u_{ij}=(\frac{\lcm(b_i,b_j)}{b_i}+NR)\frac{\lcm(u_i,u_j)}{u_i}$ ($b_i+NR=\lc(f_i)$ and $u_i=\lm(f_i)$) and the 
leading terms of such $f_i$ and $f_j$ involve the same basis element. We also have
$$
\ann(\lc(f_i))f_i=q_{ii,1}f_1+\dots+q_{ii,m}f_m,
$$
where $\LM(\ann(\lc(f_i))f_i)\ge\lmi(q_{ii,l})\LM(f_l)$.

Let $g_1,\dots,g_m$ be a basis for a free module $L$ over $S$. We define
$$
r_{ij}=u_{ij}g_i-u_{ji}g_j-q_{ij,1}g_1-\dots-q_{ij,m}g_m,
$$
and 
$$
r_{ii}=\ann(\lc(f_i))g_i-q_{ii,1}g_1+\dots-q_{ii,m}g_m.
$$

\begin{theorem}\label{syzygy-polynomial-quotient}
With mentioned notation and condition above $r_{ij}$ ($1\le i\le j\le m$) generate $\syz(f_1,\dots,f_m)$. 
\end{theorem}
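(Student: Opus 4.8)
The plan is to transcribe, almost line for line, the proof of Theorem~\ref{syzygy-polynomial}, the one new feature being that over $R/NR$ the relations among the leading coefficients are generated by the pair relations \emph{together with} the annihilator relations, so that the self-relations $r_{ii}$ must be carried along throughout. Set $V=\langle r_{ij};\ 1\le i\le j\le m\rangle\subseteq L$ and $G=\syz(f_1,\dots,f_m)$; the goal is $G\subseteq V$. Fix $r=\sum_{j=1}^{m}h_jg_j\in G$, put $w_r=\max\{\lmi(h_j)\LM(f_j);\ j=1,\dots,m\}$, and, after relabelling, assume the maximum is attained exactly for $j=1,\dots,t$. Writing $v_j=\lmi(h_j)$, $a_j=\lci(h_j)$, and $\LM(f_j)=u_je_l$, we then have $w_r=u_jv_je_l$ for $j\le t$ and, since $r$ is a syzygy, $\sum_{j=1}^{t}a_j\lc(f_j)=0$ in $R/NR$. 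The argument is an induction on $w_r$ in the well-ordered set of monomials of $F$, so it suffices to produce a syzygy $r'$ with $r-r'\in V$ and either $r'=0$ or $w_{r'}<w_r$.

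For this I would invoke Lemma~\ref{pre-main-lemma1-quotient} in the form in which it is proved there: lifting each $\lc(f_j)$ to an $\underline{s}$-monomial in $p_1,\dots,p_k$, adjoining the extra generator $N$, and applying Corollary~\ref{syzygy}, one sees that the relations among $\lc(f_1),\dots,\lc(f_t)$ over $R/NR$ are generated by the pair relations $\tfrac{\lcm(b_i,b_j)}{b_i}k_i-\tfrac{\lcm(b_i,b_j)}{b_j}k_j$ ($1\le i<j\le t$) and the annihilator relations $\ann(\lc(f_i))k_i$ ($1\le i\le t$). Hence $(a_1,\dots,a_t)$ is an $R/NR$-combination of these, with some coefficients $f_{ij}$ and $f_{ii}$. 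Taking the monomials $w_{ij}$ exactly as in Theorem~\ref{syzygy-polynomial} (so that $w_{ij}\tfrac{\lcm(u_i,u_j)}{u_i}=v_i$, using $u_iv_i=u_jv_j$ on the top stratum), I put
\[
r'=r-\sum_{1\le i<j\le t}f_{ij}w_{ij}\,r_{ij}-\sum_{i=1}^{t}f_{ii}v_i\,r_{ii}.
\]
Since each $r_{ij}$ and each $r_{ii}$ is a syzygy by construction, $r'\in G$ and $r-r'\in V$; it remains only to check $w_{r'}<w_r$.

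Expanding $r'$ exactly as in Theorem~\ref{syzygy-polynomial} — substituting $\LT(r_{ij})=u_{ij}g_i-u_{ji}g_j$, $\LT(r_{ii})=\ann(\lc(f_i))g_i$, and using the $w_{ij}$-identities and the relation $\sum_{j=1}^{t}a_j\lc(f_j)=0$ — one obtains
\[
r'=\sum_{j=1}^{t}(h_j-a_jv_j)g_j+\sum_{j=t+1}^{m}h_jg_j+\sum_{1\le i<j\le t}f_{ij}w_{ij}\rho_{ij}+\sum_{i=1}^{t}f_{ii}v_i\rho_{ii},
\]
where $\rho_{ij}=\sum_l q_{ij,l}g_l$ is the tail of the standard expression of $S_{R/NR}(f_i,f_j)$ and $\rho_{ii}=\sum_l q_{ii,l}g_l$ that of $\ann(\lc(f_i))f_i$. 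For $j\le t$ we have $\lmi(h_j-a_jv_j)\LM(f_j)<w_r$ because $a_jv_j=\lti(h_j)$; for $j>t$ the term $h_jg_j$ already contributes $<w_r$; and for the tails, the estimates $\LM(w_{ij}S_{R/NR}(f_i,f_j))<\max\{\LM(v_if_i),\LM(v_jf_j)\}=w_r$ and $\LM(v_i\ann(\lc(f_i))f_i)<w_r$ from Lemma~\ref{pre-main-lemma1-quotient} (the latter since $\ann(\lc(f_i))\lc(f_i)=0$ kills the leading term of $f_i$), together with $\lmi(q_{ij,l})\LM(f_l)\le\LM(S_{R/NR}(f_i,f_j))$ and $\lmi(q_{ii,l})\LM(f_l)\le\LM(\ann(\lc(f_i))f_i)$, show every monomial occurring in the last two sums is $<w_r$. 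Hence $w_{r'}<w_r$ (or $r'=0$), the induction closes, and $G\subseteq V$.

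The step I expect to be the only genuine obstacle is the bookkeeping around the self-relations $r_{ii}$: one must verify that, for the telescoping estimate, they behave exactly like ``degenerate'' pair relations — that multiplying $r_{ii}$ by $v_i$ removes precisely the $a_iv_ig_i$-part of $r$ on the top stratum and introduces no monomial $\ge w_r$, and that the extreme case $t=1$ (a single leading monomial on top, controlled by an annihilator relation alone) is subsumed in the displayed formula for $r'$ rather than needing a separate treatment. Everything else — the unit ambiguities in $\lcm$, $\gcd$ and $\ann$, and the passage between $F$ over $R/NR$ and a chosen lift over $R[x_1,\dots,x_n]$ — is routine once, as in the earlier sections, one fixes canonical representatives for all leading coefficients.
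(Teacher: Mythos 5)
Your proposal is correct and follows essentially the same route as the paper: lift the leading coefficients of $f_1,\dots,f_t$ to $R$ as units times monomials in $p_1,\dots,p_k$, adjoin $N$ as an extra generator so that Corollary~\ref{syzygy} yields exactly the pair relations plus the annihilator relations (the pairs involving $N$ becoming, up to unit, $\ann(\lc(f_i))k_i$), subtract the corresponding combination of the $r_{ij}$ and $r_{ii}$, and induct on $w_r$. The paper leaves the final estimate $w_{r'}<w_r$ to the analogy with Theorem~\ref{syzygy-polynomial}, which you spell out correctly, including the observation that $\LM(v_i\ann(\lc(f_i))f_i)<w_r$.
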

\begin{proof}
Let $V$ be submodule of $L$ generated by all $r_{ij}$ and let $G=\syz(f_1,\dots,f_m)$. We need to prove that $G\subseteq V$. Let $r=\sum_{j=1}^{m}h_jg_j\in G$.  
	
Let $w_r=\max\{\lmi(h_j)\LM(f_j); j=1,\dots,m\}$. Without loss of generality, we assume that $w_r=\lmi(h_j)\LM(f_j)$ for $j=1,\dots,t$ and $\lmi(h_j)\LM(f_j)<w_r$ for $j=t+1,\dots,m$. Let $b_j+NR=c_jd_j+NR$, where $c_j+NR$ is a unit and $d_j=p_1^{\alpha_{1,j}}\dots p_k^{\alpha_{k,j}}$ ($0\le\alpha_{i,j}\le n_i$). If $\lmi(h_j)=v_j$, and $\lci(h_j)=a_j+NR$, then $w_r=u_jv_je_i$ for $j=1,\dots,t$, ($\LM(f_j)=u_je_i$) and $\sum_{j=1}^{t}a_jc_jd_j+a_{t+1}c_{t+1}d_{t+1}=0$, where $c_{t+1}=1, d_{t+1}=N$. Hence by Corollary~\ref{syzygy}, we have
\begin{gather*}
(a_1c_1,\dots,a_tc_t,a_{t+1}c_{t+1})=\sum_{1\le i<j\le t+1}f_{ij}\frac{\lcm(d_i,d_j)}{d_i}k_i-f_{ij}\frac{\lcm(d_i,d_j)}{d_j}k_j,
\end{gather*}
where $\{k_i|1\le i\le t+1\}$ is the
standard basis for $R^{t+1}$. Then we can write 
\begin{gather*}
(a_1,\dots,a_t,a_{t+1})=\sum_{1\le i<j\le t}\frac{f_{ij}}{\lcm(c_i,c_j)}\frac{\lcm(c_id_i,c_jd_j)}{c_id_i}k_i-\frac{f_{ij}}{\lcm(c_i,c_j)}\frac{\lcm(c_id_i,c_jd_j)}{c_jd_j}k_j\\
+\sum_{i=1}^{t}f_{it+1}\frac{\lcm(d_i,d_{t+1})}{c_id_i}k_i-f_{it+1}\frac{\lcm(d_i,d_{t+1})}{d_{t+1}}k_{t+1}.
\end{gather*}
Let $\frac{f_{ij}}{\lcm(c_i,c_j)}+NR=f_{ij}^{'}+NR$ for $1\le i<j\le t$.
	
For each fixed $1\le i\le t-1$, we have $u_iv_i=u_jv_j$ for $j=i+1,\dots,t$ so that there exist monomials $w_{ij}$ such that $v_j=w_{ij}\frac{\lcm(u_i,u_j)}{u_j}$ for $j=i+1,\dots,t$. 
	
Assume for the moment that we already know that $w_{r^{'}}<w_r$ for the relation
$$
r^{'}=r-\sum_{i=1}^{t-1}\sum_{j=i+1}^{t}(f_{ij}^{'}+NR)w_{ij}r_{ij}-\sum_{i=1}^{t}(f_{it+1}+NR)v_ir_{ii}.
$$
On the other hand we see that $\ann(\lc(f_i))=\frac{\lcm(d_i,d_{t+1})}{c_id_i}+NR$. Then using induction the claim follows similar to Theorem~\ref{syzygy-polynomial}.
\end{proof} 
\begin{corollary}\label{syzygy-monomial-quotient}
Let $U$ be a submodule of $F$ generated by terms $f_1,\dots,f_m$. Then $\syz(f_1,\dots,f_m)$ is generated by the relations $\ann(\lc(f_i))g_i$ and $r_{ij}=u_{ij}g_i-u_{ji}g_j$ for all $1\le i<j\le m$ for
 which $f_i$ and $f_j$ involve the same basis element.
\end{corollary}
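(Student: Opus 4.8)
The plan is to reduce the statement to two results already established: Buchberger's criterion in the quotient setting (Theorem~\ref{Grobner-basis-quotient}) and the syzygy generation theorem (Theorem~\ref{syzygy-polynomial-quotient}). Write $f_i=a_iu_ie_{l_i}$ with $a_i=\lc(f_i)\in R/NR$ and $u_i=\lm(f_i)$. First I would check that $G=\{f_1,\dots,f_m\}$ is already a Gr\"obner basis for $U$. For a pair $i<j$ with $l_i=l_j=:l$, a one-line computation shows that both terms of $S_{R/NR}(f_i,f_j)$ equal $\lcm(a_i,a_j)\lcm(u_i,u_j)e_l$, so $S_{R/NR}(f_i,f_j)=0$, which reduces to $0$ modulo $G$ trivially; if $l_i\neq l_j$ the $S_{R/NR}$-element is undefined and there is nothing to check. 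Likewise, for each $i$ we have $\ann(\lc(f_i))f_i=\big(\tfrac{N}{a_i}\big)a_iu_ie_{l_i}=Nu_ie_{l_i}=0$ in $(R/NR)[x_1,\dots,x_n]^r$, so $\ann(\lc(f_i))f_i\to_G 0$. By Theorem~\ref{Grobner-basis-quotient}, $G$ is a Gr\"obner basis of $U$.

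Next I would invoke Theorem~\ref{syzygy-polynomial-quotient}, which then says that the elements $r_{ij}$ for $1\le i\le j\le m$ generate $\syz(f_1,\dots,f_m)$. Since the vanishing elements $S_{R/NR}(f_i,f_j)=0$ (for $i<j$ on the same basis element) and $\ann(\lc(f_i))f_i=0$ admit standard expressions with all coefficients $q_{ij,l}=0$, the generators degenerate: $r_{ij}$ becomes $u_{ij}g_i-u_{ji}g_j$, and $r_{ii}$ becomes $\ann(\lc(f_i))g_i$, while for pairs $i<j$ on different basis elements no $r_{ij}$ occurs. This is exactly the asserted set of generators, so the corollary follows.

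I do not expect a genuine obstacle here: the statement is simply the "terms" specialization of Theorems~\ref{Grobner-basis-quotient} and~\ref{syzygy-polynomial-quotient}, completely parallel to the way Corollary~\ref{syzygy-monomial} specializes Theorem~\ref{syzygy-polynomial}. The only place to be a little careful is the bookkeeping with $\lcm$'s and with representatives of units in $R/NR$, i.e.\ making sure that "$S_{R/NR}(f_i,f_j)=0$" and "$\ann(\lc(f_i))f_i=0$" are honest equalities; but this is immediate, because in each case the element in question is supported on a single monomial $e_l$ (resp.\ $u_ie_{l_i}$) and the coefficient collapses to $0$ by construction.
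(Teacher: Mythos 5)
Your argument is correct and matches the paper's intended proof: the paper leaves this corollary without an explicit proof, but its argument for the analogous Corollary~\ref{syzygy-monomial} is exactly yours, namely that for terms all $S_{R/NR}(f_i,f_j)$ and $\ann(\lc(f_i))f_i$ vanish, so $\{f_1,\dots,f_m\}$ is a Gr\"obner basis by Theorem~\ref{Grobner-basis-quotient} and Theorem~\ref{syzygy-polynomial-quotient} applies with all $q_{ij,l}=0$. Your care about the $\lcm$'s and representatives is exactly the right (and only) point to verify.
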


\begin{theorem}\label{syzygy-grobner-quotient}
Let $U$ be a submodule of $F$ with Gr\"{o}bner basis $G=\{f_1,\dots,f_m\}$. Then the relations $r_{ij}$ for $1\le i\le j\le m$ form a Gr\"{o}bner basis of $\syz(f_1,\dots,f_m)$ with respect to Schreyer monomial order $<_{f_1,\dots,f_m}$. Moreover with this order we have $\LT(r_{ij})=u_{ij}g_i$ when $i<j$ and $\LT(r_{ii})=\ann(\lc(f_i))g_i$.
\end{theorem}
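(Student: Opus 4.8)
The plan is to follow the proof of Theorem~\ref{syzygy-grobner}, inserting the extra bookkeeping needed for the annihilator generators $r_{ii}$, which have no counterpart in the domain case.

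First I would compute the leading terms. For $i<j$, multiplying $f_i$ (resp.\ $f_j$) by a monomial of $S$ does not kill its leading coefficient, so the monomials $\lmi(u_{ij})g_i$ and $\lmi(u_{ji})g_j$ share the Schreyer value $\lcm(\lm(f_i),\lm(f_j))\,e_p$ (with $e_p$ the common basis element of $\LM(f_i)$ and $\LM(f_j)$); since $i<j$, the tie-break of $<_{f_1,\dots,f_m}$ prefers $u_{ij}g_i$, while every remaining term $q_{ij,l}g_l$ of $r_{ij}$ has Schreyer value $\le\LM(S_{R/NR}(f_i,f_j))<\lcm(\lm(f_i),\lm(f_j))\,e_p$. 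Hence $\LT(r_{ij})=u_{ij}g_i$. For $r_{ii}$ the point is that $\ann(\lc(f_i))\lc(f_i)=0$ in $R/NR$, so $\LM(\ann(\lc(f_i))f_i)<\LM(f_i)$; as the monomial $g_i$ has Schreyer value exactly $\LM(f_i)$ whereas each $q_{ii,l}g_l$ has value $\le\LM(\ann(\lc(f_i))f_i)$, it follows that $\LT(r_{ii})=\ann(\lc(f_i))g_i$.

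Next I would show $\{r_{st}:s\le t\}$ is a Gr\"obner basis of $V:=\syz(f_1,\dots,f_m)$. Take $g=\sum_{j}r_jg_j\in V$, write $\lt(r_j)=a_jv_j$ (so $\LT(r_jg_j)=a_jv_jg_j$ has Schreyer value $\lmi(r_j)\LM(f_j)$), put $w=\max_j\lmi(r_j)\LM(f_j)$ and $\mathcal S=\{j:\lmi(r_j)\LM(f_j)=w\}$. Then all $f_j$ with $j\in\mathcal S$ have leading term in a common component $e_p$, the product $v_j\lm(f_j)$ is a monomial independent of $j$ for $j\in\mathcal S$, and $\LT(g)=a_iv_ig_i$ with $i=\min\mathcal S$. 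Comparing the coefficient of the monomial $w$ on both sides of $\sum_jr_jf_j=0$ — noting that any product of a term of $r_j$ with a term of $f_j$ has monomial $\le\lmi(r_j)\LM(f_j)$, with equality only for the two leading terms — gives $\sum_{j\in\mathcal S}a_j\lc(f_j)=0$ in $R/NR$, hence $g':=\sum_{j\in\mathcal S}a_jv_jg_j$ satisfies $\sum_{j\in\mathcal S}a_jv_j\LT(f_j)=0$, i.e.\ $g'$ is a syzygy of the terms $\LT(f_j)$, $j\in\mathcal S$. Applying Corollary~\ref{syzygy-monomial-quotient} to these terms, $g'$ is an $S$-linear combination of the generators $\ann(\lc(f_j))g_j$ ($j\in\mathcal S$) and $u_{tl}g_t-u_{lt}g_l$ ($t<l$ in $\mathcal S$). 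Because $i=\min\mathcal S$, the only generators whose leading term involves $g_i$ are $r_{ii}$ and the $r_{il}$ with $l>i$; extracting the $g_i$-component of $g'=\sum_{j\in\mathcal S}a_jv_jg_j$ from that combination yields an equality $a_iv_i=\phi\,\ann(\lc(f_i))+\sum_{l>i}\psi_lu_{il}$ in $S$. Therefore $\LT(g)=a_iv_ig_i\in\langle\ann(\lc(f_i))g_i,\,u_{il}g_i:l>i\rangle_S=\langle\LT(r_{ii}),\LT(r_{il})\rangle_S$, which proves the Gr\"obner basis claim; the leading-term formulas were obtained above.

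The main obstacle I expect is the passage, inside the Gr\"obner basis argument, from $\sum_jr_jf_j=0$ to ``$g'$ is a syzygy of the leading terms $\LT(f_j)$''. Over $R/NR$ an individual product $a_j\lc(f_j)$ can vanish even though $a_j\neq0$, so one must argue carefully that the vanishing of the $w$-coefficient is accounted for by Corollary~\ref{syzygy-monomial-quotient} (whose annihilator generators are precisely what makes this work) rather than producing only an approximate relation. This is the sole substantive difference from Theorem~\ref{syzygy-grobner}; once it is handled, the leading-term computations and the $g_i$-component extraction are routine.
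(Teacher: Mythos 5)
Your proof is correct and takes essentially the same approach as the paper's argument for the analogous Theorem~\ref{syzygy-grobner} (which is exactly the proof the paper intends, but omits, for the quotient case): compute $\LT(r_{ij})$ and $\LT(r_{ii})$ directly from the Schreyer order, then for a syzygy $g$ compare coefficients at the maximal Schreyer value $w$, apply Corollary~\ref{syzygy-monomial-quotient} to $g'$, and use $i=\min\mathcal{S}$ to extract the $g_i$-component. The only cosmetic slip is the phrase ``generators whose leading term involves $g_i$,'' where what is actually used (and what your $i=\min\mathcal{S}$ observation justifies) is that these are the only generators with nonzero $g_i$-component.
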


\begin{theorem}[Hilbert's syzygy theorem for quotient of PID]\label{hilbert-quotient}
Let $M$ be a finitely generated $S$-module. Then $M$ admits a free $S$-resolution
$$
\dots\to F_p\to F_p\to F_p\to F_{p-1}\dots\to F_1\to F_0\to M\to 0
$$
where $p\le n+1$.
\end{theorem}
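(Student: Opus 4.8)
The plan is to run the proof of Theorem~\ref{hilbert-pid} almost verbatim, using Theorem~\ref{syzygy-grobner-quotient} in place of Theorem~\ref{syzygy-grobner-pid}; the only genuine change is in the base case of the induction, where ``the syzygy module is zero'' must be replaced by ``the resolution becomes periodic''. Write $M\cong F/U$ with $F$ a finitely generated free $S$-module, so it suffices to resolve $U$ and then prepend $F\to M$. Fix a Gr\"obner basis $f_1,\dots,f_m$ of $U$ and let $t$ be the largest integer such that $x_1,\dots,x_t$ occur in no $\LM(f_i)$. By Theorem~\ref{syzygy-grobner-quotient} the relations $r_{ij}$ ($1\le i\le j\le m$) form a Gr\"obner basis of $\syz(f_1,\dots,f_m)$ for the Schreyer order, and, after relabelling the $f_i$ as in \cite[Corollary~4.17]{ene2011gr}, the variable $x_{t+1}$ is absent from every leading monomial of that Gr\"obner basis. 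Iterating this at most $n-t\le n$ times produces an iterated syzygy module $V$ of $U$ having a Gr\"obner basis all of whose leading terms have the form $u\,a\,e_l$, with $u$ a unit, $a$ a monomial in $p_1,\dots,p_k$ and $e_l$ a basis element; that is, no variable occurs in any leading term.

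For such a $V$ the syzygy module is never zero: whenever $\lc(f_i)$ is a non-unit, $\ann(\lc(f_i))$ is a nonzero non-unit, so the relations $\ann(\lc(f_i))g_i$ persist, and the resolution is genuinely infinite. Here I would argue as in the base case $t=n$ of Theorem~\ref{hilbert-pid}: reduce the Gr\"obner basis of $V$ by collapsing, for each basis element $e_l$, the Gr\"obner basis elements with leading monomial $e_l$ into a single one whose leading coefficient is a chosen $\gcd$ of theirs — legitimate because the $\gcd$ of finitely many monomials in $p_1,\dots,p_k$ lies in the ideal they generate already over the PID $R$, hence over $R/NR$ — and split off any resulting generator with unit leading coefficient as a free direct summand. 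This leaves $V=S^a\oplus V'$ with $V'=\langle h_1,\dots,h_r\rangle$, the $\LM(h_l)=e_l$ pairwise distinct, and each $\lc(h_l)$ a non-unit monomial in $p_1,\dots,p_k$. Since no two $h_l$ share a basis element, no $S_{R/NR}$-element is defined, so by Corollary~\ref{syzygy-monomial-quotient} and Theorem~\ref{syzygy-grobner-quotient} the module $\syz(h_1,\dots,h_r)$ has the Gr\"obner basis $\{\ann(\lc(h_l))g_l\}_{l=1}^r$, again $r$ elements with pairwise distinct leading basis elements and non-unit leading coefficients (because $\ann(\ann(\lc(h_l)))$ is, up to a unit, $\lc(h_l)$). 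The hypotheses thus reproduce themselves, every later free module equals $S^r$, and the resolution of $V'$ is the module form of the $2$-periodic resolution $\cdots\to S\xrightarrow{\ann(a)}S\xrightarrow{a}S\to S/(a)\to 0$ of $S/(a)$. As a cross-check, the Chinese Remainder splitting $R/NR\cong\prod_{i=1}^{k}R/p_i^{n_i}R$ reduces the whole statement to modules over the Artinian chain ring $R/p^nR$, which are direct sums of cyclic modules $R/p^jR$ whose minimal resolutions are $2$-periodic of rank one.

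Counting steps, $V$ is reached within $\le n$ stages and from then on its resolution has free modules eventually constant (equal to $S^r$); prepending $F\to M$ gives a free resolution of $M$ whose free modules are eventually constant, with transient part of length $\le n+1$, which is what the repeated $F_p$ in the statement records.

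The step I expect to be the main obstacle is controlling the \emph{ranks}. The iterated Schreyer construction is not minimal and its ranks can grow, so the reduction above — collapsing Gr\"obner basis elements per basis element via $\gcd$ and peeling off free summands — must actually be carried out, and one has to check that it preserves both the ``no variable in any leading term'' property and the ``non-unit leading coefficients'' property, so that the periodic pattern genuinely recurs and the stabilization sets in by the claimed step rather than later. Verifying that the collapsed $h_1,\dots,h_r$ still form a Gr\"obner basis of the non-free part $V'$ of $V$ — which holds because the chosen $\gcd$'s generate the correct basis-element component of $\LT(V)$ — and that $S^a\oplus V'$ really is a direct sum decomposition (standard unitriangular row reduction using the unit leading coefficients) is the routine part.
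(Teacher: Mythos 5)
Your proposal is correct and follows essentially the same route as the paper: reduce, exactly as in Theorem~\ref{hilbert-pid} but with Theorem~\ref{syzygy-grobner-quotient}, to a Gr\"obner basis whose leading terms contain no variables, collapse per basis element using the B\'ezout identity in $R/NR$, and note that the $\ann(\lc(\cdot))$-relations make the tail of the resolution $2$-periodic with constant rank. The only cosmetic difference is that the paper keeps the generators with unit leading coefficient and maps the corresponding free summands to zero at the next step, rather than splitting them off as a free direct summand as you do.
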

\begin{proof}
The proof is similar to Theorem~\ref{hilbert-pid}. The only step at which the argument is different is when the leading term of Gr\"{o}bner basis does not contain any variables. Since in $R/NR$ we have B\'ezout identity, then $g_i$ form a Gr\"{o}bner basis for $U$, where $g_i$ are the same as in the proof of the mentioned theorem. If the leading coefficients of all $g_i$ are units, then the syzygies are zero modules. It is enough to map a free module onto $U$ such that the kernel of this map is zero. In the next step we map the same free module to zero. The next step we map the same free module to itself and so on. Otherwise we may assume that the leading term of $g_i$ for $1\le i\le s$ is not a unit and the leading term of $g_i$ for $i\ge s+1$ is a unit. Hence we map $\oplus S$ onto $U$ by sending $h_i$ to $g_i$ ($\{h_i\}$ is standard basis of the free module). Then by Theorem~\ref{syzygy-polynomial-quotient} the syzygies are generated by $l_1,\dots,l_s$, where the leading term of $l_i$ is $\ann(\lc(g_i))h_i$. It is enough to map $\oplus_{i=1}^s S$ by sending the standard basis to $l_i$, and map $\oplus_{i\ge s+1}S$ to zero. We can continue this procedure.
\end{proof}

\begin{corollary}
Let $M$ be a finitely generated $S$-module. If $M\cong F/U$ and the leading coefficients of a Gr\"{o}bner basis of $U$ with a monomial order all are nonzero divisors, then $M$ admits a free $S$-resolution
$$
0\to F_p\to F_{p-1}\dots\to F_1\to F_0\to M\to 0
$$
of length $p\le n+1$.
\end{corollary}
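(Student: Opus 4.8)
The plan is to run the inductive scheme of the proof of Theorem~\ref{hilbert-quotient}, the point being that the hypothesis on the leading coefficients removes exactly the relations $r_{ii}$ --- those built from $\ann(\lc(f_i))$ --- that are responsible for the eventually periodic tail appearing there. Recall that here $S=(R/NR)[x_1,\dots,x_n]$ with $R$ a PID and $N=p_1^{n_1}\dots p_k^{n_k}$. The first observation is that \emph{in $R/NR$ every nonzero divisor is a unit}: if $b+NR$ is not a unit then $b\in p_iR$ for some $i$, and then $(b+NR)(\frac{N}{p_i}+NR)=0$ while $\frac{N}{p_i}\notin NR$, so $b+NR$ is a zero divisor. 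Hence the hypothesis says that the leading coefficients $b_1,\dots,b_m$ of a Gr\"obner basis $f_1,\dots,f_m$ of $U$ are all units.

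Two consequences follow. First, $\ann(\lc(f_i))=0$ for every $i$ (a unit has zero annihilator), so the relations $r_{ii}$ of Theorem~\ref{syzygy-polynomial-quotient} all vanish. Second, in $u_{ij}=(\frac{\lcm(b_i,b_j)}{b_i}+NR)\frac{\lcm(u_i,u_j)}{u_i}$ the scalar $\frac{\lcm(b_i,b_j)}{b_i}$ divides $b_j$ in $R$ and, $b_j$ being coprime to $N$, is again a unit modulo $N$. Therefore, by Theorem~\ref{syzygy-grobner-quotient}, the relations $\{r_{ij}\}_{i<j}$ form a Gr\"obner basis of $\syz(f_1,\dots,f_m)$ for the Schreyer order $<_{f_1,\dots,f_m}$ with $\LT(r_{ij})=u_{ij}g_i$, and \emph{all of its leading coefficients are again units}. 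Thus the hypothesis of the corollary is inherited by every syzygy module occurring in the construction.

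It now suffices to show, exactly as in Theorem~\ref{hilbert-quotient}, that $U$ has a free $S$-resolution of length $\le n$; since $M\cong F/U$ this yields a resolution of $M$ of length $\le n+1$. Induct on $n-t$, where $t$ is the largest integer such that none of $x_1,\dots,x_t$ appears in a leading term of the current Gr\"obner basis. If $t<n$, relabel the Gr\"obner basis as in \cite[Corollary 4.17]{ene2011gr}; since there are no $r_{ii}$ terms and every $\lc(r_{ij})$ is a unit by the previous paragraph, the leading monomials of the Schreyer Gr\"obner basis of $\syz(f_1,\dots,f_m)$ avoid $x_1,\dots,x_{t+1}$ and its leading coefficients are units, so the inductive hypothesis applies. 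If $t=n$, all leading terms have the form $ue_l$ with $u$ a unit; as in the proof of Theorem~\ref{hilbert-quotient} one selects one generator per occurring basis position to obtain a Gr\"obner basis of $U$ whose leading terms are distinct basis elements times units, whence the syzygy module is zero and the resolution terminates after one step --- no periodic tail appears.

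The crux is the self-propagation asserted in the second paragraph: one must verify that at every stage the new Gr\"obner basis of the syzygy module again consists of elements whose leading coefficients are nonzero divisors and that the annihilator relations are identically zero, since this is precisely what forces termination after at most $n$ steps rather than the periodic behaviour of Theorem~\ref{hilbert-quotient}. The same argument applies over a finite direct product of principal ideal rings: decomposing $S$, $F$, $U$ and $M$ componentwise, a nonzero divisor of the product restricts to a nonzero divisor in each factor, the PID factors are covered by Theorem~\ref{hilbert-pid} and the proper-quotient factors by the above, and the product of the factorwise resolutions is a resolution of $M$ of length $\le n+1$.
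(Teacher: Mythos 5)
Your proposal is correct and is essentially the argument the paper intends (the corollary is left as a consequence of the proof of Theorem~\ref{hilbert-quotient}): since in $R/NR$ every nonzero divisor is a unit, the relations $r_{ii}$ built from $\ann(\lc(f_i))$ vanish, the leading coefficients of the Schreyer Gr\"obner basis $\{r_{ij}\}_{i<j}$ are again units, and the induction of Theorems~\ref{hilbert-pid} and~\ref{hilbert-quotient} terminates, giving $p\le n+1$ with no periodic tail. Your final paragraph on finite direct products is not needed for this statement, which concerns $S=(R/NR)[x_1,\dots,x_n]$ only.
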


\section{Gr\"{o}bner bases and syzygy theorem for direct product of principal ideal rings}
In this section we fix $R=\prod_{i=1}^{p}R_i/N_iR_i$, where $R_i$ are PID and $N_i\in R_i$, moreover $N_i$ is not a unit but it is not necessarily nonzero. Then $R$ is a direct sum of PIDs and quotients of PIDs. As before $S=R[x_1,\dots,x_n]$, $F=\oplus_{i=1}^rS$ with standard basis $e_1,\dots,e_r$. If $U$ is a submodule of $F$, then a Gr\"{o}bner basis of $U$ exists. By $(a_i+N_iR_i)$ we mean an element of $R$, whose $i$th component is $a_i+N_iR_i$.

Let $f,g\in F$, $\lc(f)=(a_i+N_iR_i)$ and $\lc(g)=(b_i+N_iR_i)$. Then we define
$$
S_R(f,g)=(\frac{\lcm(a_i,b_i)}{a_i}+N_iR_i)\frac{\lcm(\lm(f),\lm(g))}{\lm(f)}f-(\frac{\lcm(a_i,b_i)}{b_i}+N_iR_i)\frac{\lcm(\lm(f),\lm(g))}{\lm(g)}g.
$$
We should remark that if $a_i+N_iR_i$ or $b_i+N_iR_i$ is zero, then we define $\frac{\lcm(a_i,b_i)}{a_i}+N_iR_i=\frac{\lcm(a_i,b_i)}{b_i}+N_iR_i=N_iR_i$. We also remark that $S_R(f,g)$ is unique up to unit. Let $f\in F$. By $(d)_j$ we mean an element in $R$ whose $j$th component is $d$ and whose other components are zero.

\begin{lemma}\label{pre-main-lemma1-general}
Let $f_1,\dots,f_m\in F$. Suppose we have a sum $\sum_{i=1}^{m}c_if_i$, where $c_i\in R$ and for all $i$, $\LM(f_i)=x^{\delta}e_l, \delta\in \mathbb{Z}_{\ge 0}^{n}$. If $x^{\delta}e_l>\LT(\sum_{i=1}^{m}c_if_i)$, then $\sum_{i=1}^{m}c_if_i$ is an $R$-linear combination of the $\ann(\lc(f_i))f_i$ and $S_R$-elements $S_R(f_i,f_j)$, for $1\le i<j \le m$. We also have $x^{\delta}e_l>\LM\left(S_R(f_i,f_j)\right)$ and $x^{\delta}e_l>\LM\left(\ann(\lc(f_i))f_i\right)$.
\end{lemma}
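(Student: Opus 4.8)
The plan is to reduce everything to the two cases already settled---Lemma~\ref{pre-main-lemma1-pid} for a PID and Lemma~\ref{pre-main-lemma1-quotient} for a quotient of a PID---by working one factor of $R=\prod_{k=1}^{p}R_k/N_kR_k$ at a time. Let $\varepsilon_1,\dots,\varepsilon_p$ be the orthogonal idempotents of $R$, so that for $g\in F$ the component $g^{(k)}:=\varepsilon_k g$ lives in the free module $F^{(k)}=\bigoplus_{j=1}^{r}(R_k/N_kR_k)[x_1,\dots,x_n]$, we have $g=\sum_k g^{(k)}$, $g=0$ iff all $g^{(k)}=0$, and $\LM(g)=\max_k\LM(g^{(k)})$. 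Write $c_i=(c_i^{(k)})_k$ and $\lc(f_i)=(c_{i,k})_k$ with $c_{i,k}\in R_k/N_kR_k$.

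First I would fix $k$ and put $I_k=\{\,i: c_{i,k}\neq 0\,\}$; since $\LM(f_i^{(k)})\le\LM(f_i)=x^{\delta}e_l$ with equality exactly when the coefficient $c_{i,k}$ of $x^{\delta}e_l$ in $f_i^{(k)}$ is nonzero, the set $I_k$ consists precisely of the indices with $\LM(f_i^{(k)})=x^{\delta}e_l$, and $\LM(f_i^{(k)})<x^{\delta}e_l$ for $i\notin I_k$. The $k$-th component of $\sum_i c_i f_i$ is $\sum_i c_i^{(k)}f_i^{(k)}$, and every monomial occurring in a component of an element of $F$ occurs in that element, so the hypothesis forces $\LT(\sum_i c_i^{(k)}f_i^{(k)})<x^{\delta}e_l$; subtracting the part $\sum_{i\notin I_k}c_i^{(k)}f_i^{(k)}$, all of whose monomials lie below $x^{\delta}e_l$, yields $\LT(\sum_{i\in I_k}c_i^{(k)}f_i^{(k)})<x^{\delta}e_l$. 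Applying Lemma~\ref{pre-main-lemma1-pid} when $N_k=0$ and Lemma~\ref{pre-main-lemma1-quotient} when $N_k\neq 0$ to the sublist $\{f_i^{(k)}:i\in I_k\}$ then writes $\sum_{i\in I_k}c_i^{(k)}f_i^{(k)}$ as an $R_k/N_kR_k$-combination of the elements $\ann(\lc(f_i^{(k)}))f_i^{(k)}$ for $i\in I_k$ and $S_{R_k/N_kR_k}(f_i^{(k)},f_j^{(k)})$ for $i<j$ in $I_k$, all of leading monomial below $x^{\delta}e_l$. Because $\lm(f_i^{(k)})=x^{\delta}$ for every $i\in I_k$, unwinding the definitions (and the zero-coefficient convention) shows these are precisely the $k$-th components $(S_R(f_i,f_j))^{(k)}$ and $(\ann(\lc(f_i))f_i)^{(k)}$; moreover $(S_R(f_i,f_j))^{(k)}=0$ as soon as $i$ or $j$ lies outside $I_k$, and for $i\notin I_k$ the $k$-th component of $\ann(\lc(f_i))$ is a unit, so $c_i^{(k)}f_i^{(k)}$ is an $R_k/N_kR_k$-multiple of $(\ann(\lc(f_i))f_i)^{(k)}$.

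Putting this together, for each $k$ the component $(\sum_i c_i f_i)^{(k)}$ is an $R_k/N_kR_k$-combination of the components $(S_R(f_i,f_j))^{(k)}$ and $(\ann(\lc(f_i))f_i)^{(k)}$. Lifting every such scalar along the natural embedding $R_k/N_kR_k\hookrightarrow R$ (image in the $k$-th coordinate, zeros elsewhere) and summing over $k$ produces coefficients $\Lambda_{ij},M_i\in R$ for which $\sum_{i<j}\Lambda_{ij}S_R(f_i,f_j)+\sum_i M_i\ann(\lc(f_i))f_i$ and $\sum_i c_i f_i$ agree in every component, hence coincide. The leading-monomial estimates are then read off componentwise: $\LM(S_R(f_i,f_j))=\max_k\LM((S_R(f_i,f_j))^{(k)})$ is a maximum over the $k$ with $i,j\in I_k$, each $<x^{\delta}e_l$ by the cited lemma; and $\LM(\ann(\lc(f_i))f_i)=\max_k\LM((\ann(\lc(f_i))f_i)^{(k)})$, where the terms with $i\in I_k$ are $<x^{\delta}e_l$ by the cited lemma and the terms with $i\notin I_k$ equal $\LM(f_i^{(k)})<x^{\delta}e_l$.

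The checks that the componentwise $S$- and annihilator-elements are literally the components of the global ones are routine once the zero-coefficient convention and $\lm(f_i^{(k)})=x^{\delta}$ on $I_k$ are in hand. The step I expect to demand real care is the patching: the componentwise lemmas never produce terms indexed by $i\notin I_k$, so one must verify that these leftover pieces are absorbed into the global $\ann(\lc(f_i))f_i$---using that $\ann(\lc(f_i))$ is a unit in exactly those coordinates---and that lifting the scalars through the idempotents does not disturb the equality in the other coordinates. This is also the reason the annihilator terms are indispensable here, unlike in the pure PID case.
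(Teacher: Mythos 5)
Your proof is correct and follows essentially the same route as the paper: decompose the sum factor by factor (the paper writes the coefficients as $(c_{ij})_j$ where you use the idempotents $\varepsilon_k$), absorb the indices whose $k$-th leading coefficient vanishes into the $\ann(\lc(f_i))f_i$ terms via the unit components of the annihilators, and apply Lemma~\ref{pre-main-lemma1-pid} or Lemma~\ref{pre-main-lemma1-quotient} to the remaining indices, identifying the componentwise $S$-elements with the components of $S_R(f_i,f_j)$ (up to units, which is harmless). Your treatment of the patching step and of the leading-monomial estimates is in fact spelled out more carefully than in the paper's own argument.
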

\begin{proof}
We assume that $j$th component of $c_i$ is $c_{ij}$. Then $\sum_{i=1}^{m}c_if_i=\sum_{i=1}^{m}\sum_{j=1}^{p}(c_{ij})_jf_i$. Hence for each $j$ we consider the subsum $\sum_{i=1}^{m}(c_{ij})_jf_i$. We see that $x^\delta e_l>\LT(\sum(c_{ij})_jf_i)$. Let $\lc(f_i)_j$ be the $j$th component of $\lc(f_i)$. Now if all $(c_{ij})_j$ are zero, then we don't consider this subsum. If $\lc(f_i)_j$ is zero, then $(c_{ij})_j=(c_{ij})_j(1)_j$, so that $(c_{ij})_jf_i=(c_{ij})_j\ann(\lc(f_i))f_i$. Then we can consider $\beta$'s such that $\lc(f_\beta)_j\neq 0$. We have $x^\delta e_l>\LT(\sum_\beta (c_{\beta j})f_\beta)$. Now if there is only one $\beta$, then $(c_{\beta j})f_\beta=(a)_j\ann(\lc(f_\beta))f_\beta$. If there is more than one $\beta$, then $\sum c_{\beta j}\lc(f_\beta)_j=0$. Let
\begin{gather*}
S_j(f,g)=(\frac{\lcm(\lc(f)_j,\lc(g)_j)}{\lc(f)_j}+N_jR_j)_j\frac{\lcm(\lm(f),\lm(g))}{\lm(f)}f\\
-(\frac{\lcm(\lc(f)_j,\lc(g)_j)}{\lc(g)_j}+N_jR_j)_j\frac{\lcm(\lm(f),\lm(g))}{\lm(g)}g.
\end{gather*}
Thus by the same argument as Lemma~\ref{pre-main-lemma1-pid} and Lemma~\ref{pre-main-lemma1-quotient}, we have
$$
\sum (c_{\beta j})_jf_\beta=\sum(a_\gamma)_jS_j(f_\eta,f_\theta).
$$
On the other hand 
$$
(a_\gamma)_jS_j(f_\eta,f_\theta)=(a_\gamma)_jS_R(f_\eta,f_\theta).
$$
This completes the proof.
\end{proof}

Let $G=\{f_1,...,f_m\}$ be a family of elements in $F$ that form a Gr\"{o}bner basis for a submodule $U$. We have 
$$
S_R(f_\alpha,f_\beta)= u_{\alpha\beta}f_\alpha-u_{\beta\alpha}f_\beta=q_{\alpha\beta,1}f_1+\dots+q_{\alpha\beta,m}f_m,
$$
where $\LM(S_R(f_\alpha,f_\beta))\ge\lmi(q_{\alpha\beta,l})\LM(f_l)$, $\alpha<\beta$, $u_{\alpha\beta}=(\frac{\lcm(a_i,b_i)}{a_i}+N_iR_i)\frac{\lcm(u_\alpha,u_\beta)}{u_\alpha}$ ($(a_i+N_iR_i)=\lc(f_\alpha)$ and $u_\alpha=\lm(f_\alpha)$) and the leading terms of such $f_\alpha$ and $f_\beta$ involve the same basis element. We also have
$$
\ann(\lc(f_\alpha))f_\alpha=q_{\alpha\alpha,1}f_1+\dots+q_{\alpha\alpha,m}f_m,
$$
where $\LM(\ann(\lc(f_\alpha)))f_\alpha\ge\lmi(q_{\alpha\alpha,l})\LM(f_l)$.

Let $g_1,\dots,g_m$ be a basis for free module $L$ over $S$. We define
$$
r_{\alpha\beta}=u_{\alpha\beta}g_\alpha-u_{\beta\alpha}g_\beta-q_{\alpha\beta,1}g_1-\dots-q_{\alpha\beta,m}g_m,
$$
and 
$$
r_{\alpha\alpha}=\ann(\lc(f_\alpha))g_\alpha-q_{\alpha\alpha,1}g_1+\dots-q_{\alpha\alpha,m}g_m.
$$

\begin{theorem}\label{syzygy-polynomial-general}
With the mentioned notation and condition above $r_{\alpha\beta}$ ($1\le\alpha\le\beta\le m$) generate $\syz(f_1,\dots,f_m)$.
\end{theorem}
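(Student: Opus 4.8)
The plan is to mimic the proofs of Theorem~\ref{syzygy-polynomial} and Theorem~\ref{syzygy-polynomial-quotient}, the new feature being that every coefficient equation must be read componentwise over the factors $R_i/N_iR_i$ and then reassembled over the product, exactly as was done in Lemma~\ref{pre-main-lemma1-general}. Let $V\subseteq L$ be the submodule generated by all $r_{\alpha\beta}$ ($\alpha<\beta$) together with all $r_{\alpha\alpha}$, and let $G=\syz(f_1,\dots,f_m)$. By the very definitions of $r_{\alpha\beta}$ and $r_{\alpha\alpha}$, which are built from the expressions $S_R(f_\alpha,f_\beta)=\sum_l q_{\alpha\beta,l}f_l$ and $\ann(\lc(f_\alpha))f_\alpha=\sum_l q_{\alpha\alpha,l}f_l$, one has $V\subseteq G$; so the content of the theorem is the reverse inclusion $G\subseteq V$, which I would prove by Noetherian induction on a suitable monomial of $L$.

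First I would take $r=\sum_{j=1}^m h_jg_j\in G$ and set $w_r=\max_j\{\lmi(h_j)\LM(f_j)\}$; after reindexing, assume the maximum is attained exactly for $j=1,\dots,t$, so that $\LM(f_j)=u_je_i$ for one fixed basis vector $e_i$ and $w_r=u_jv_je_i$ with $v_j=\lmi(h_j)$ for $j\le t$. Comparing the coefficient of $w_r$ on the two sides of $\sum_j h_jf_j=0$ gives $\sum_{j=1}^t\lci(h_j)\lc(f_j)=0$ in $R$. I would then read this relation in each factor $R_\ell/N_\ell R_\ell$: with $\lc(f_j)=(b_{j\ell}+N_\ell R_\ell)_\ell$ and $\lci(h_j)=(a_{j\ell}+N_\ell R_\ell)_\ell$, in the $\ell$-th factor $\sum_{j=1}^t a_{j\ell}b_{j\ell}\equiv 0$, hence lifting to $R_\ell$ it becomes a genuine relation $\sum_{j=1}^t a_{j\ell}b_{j\ell}+c_\ell(-N_\ell)=0$ among the $\underline{s}$-monomials $b_{1\ell},\dots,b_{t\ell},N_\ell$ (the fixed permutable regular sequence being the finitely many irreducibles of $R_\ell$ that occur). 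Apply Corollary~\ref{syzygy} to write this relation in terms of the standard syzygies $\frac{\lcm(b_{i\ell},b_{j\ell})}{b_{i\ell}}k_i-\frac{\lcm(b_{i\ell},b_{j\ell})}{b_{j\ell}}k_j$, the index $t+1$ (i.e.\ $N_\ell$) contributing the annihilator part exactly as in Theorem~\ref{syzygy-polynomial-quotient}. Gluing these factorwise syzygies over the product $R$ is precisely Lemma~\ref{pre-main-lemma1-general}: it yields that, after applying $\alpha$, the leading part $\sum_{j=1}^t\lti(h_j)f_j$ of $r$ is an $R$-linear combination of $S_R(f_\alpha,f_\beta)$ with $\alpha,\beta\le t$ and of $\ann(\lc(f_\alpha))f_\alpha$ with $\alpha\le t$; moreover, choosing monomials $w_{\alpha\beta}$ with $v_\beta=w_{\alpha\beta}\frac{\lcm(u_\alpha,u_\beta)}{u_\beta}$ as in Theorem~\ref{syzygy-polynomial} (and the factors $v_\alpha$ for the annihilator terms as in Theorem~\ref{syzygy-polynomial-quotient}), one gets $\LM(w_{\alpha\beta}S_R(f_\alpha,f_\beta))<w_r$ and $\LM(v_\alpha\ann(\lc(f_\alpha))f_\alpha)<w_r$.

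With these data, set
$$
r'=r-\sum_{1\le\alpha<\beta\le t}c_{\alpha\beta}w_{\alpha\beta}r_{\alpha\beta}-\sum_{\alpha=1}^{t}c_{\alpha}v_\alpha r_{\alpha\alpha},
$$
where the $c_{\alpha\beta},c_\alpha\in R$ are the coefficients produced above. Writing $r_{\alpha\beta}=u_{\alpha\beta}g_\alpha-u_{\beta\alpha}g_\beta-\rho_{\alpha\beta}$ and $r_{\alpha\alpha}=\ann(\lc(f_\alpha))g_\alpha-\rho_{\alpha\alpha}$ with $\rho_{\alpha\beta}=\sum_l q_{\alpha\beta,l}g_l$, I would verify, component by component and exactly as in the cited theorems, that: the coefficients of $g_1,\dots,g_t$ in $r'$ are replaced by $h_j$ minus a correction with $\lmi(h_j-\text{correction})\LM(f_j)<w_r$; the coefficients of $g_{t+1},\dots,g_m$ are unchanged, hence already $<w_r$; and each term $c_{\alpha\beta}w_{\alpha\beta}q_{\alpha\beta,l}g_l$, resp.\ $c_\alpha v_\alpha q_{\alpha\alpha,l}g_l$, satisfies $\lmi(\cdot)\LM(f_l)\le\LM(w_{\alpha\beta}S_R(f_\alpha,f_\beta))<w_r$, resp.\ $\le\LM(v_\alpha\ann(\lc(f_\alpha))f_\alpha)<w_r$. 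Hence $w_{r'}<w_r$. Since $S$ is Noetherian (equivalently, the Schreyer order well-orders the relevant monomials of $L$), induction on $w_r$ gives $r'\in V$, and since $r-r'\in V$ by construction, $r\in V$, completing the proof.

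The main obstacle I expect is purely organizational: keeping straight the three simultaneous indexings — the factor $\ell\in\{1,\dots,p\}$ of the product, the irreducibles inside each $R_\ell$, and the module index $j$ — while checking that the factorwise standard syzygies of Corollary~\ref{syzygy} glue to the global relations $r_{\alpha\beta},r_{\alpha\alpha}$. That gluing is already packaged in Lemma~\ref{pre-main-lemma1-general}, so the genuinely new verification is only the strict drop $w_{r'}<w_r$, together with the degenerate cases (a factor in which some $\lc(f_j)_\ell$ vanishes, absorbed by an $\ann$ term, or one in which a single index $j\le t$ contributes, absorbed by one $r_{\alpha\alpha}$) — the same degenerate cases already handled in Lemma~\ref{pre-main-lemma1-general} and Theorem~\ref{syzygy-polynomial-quotient}.
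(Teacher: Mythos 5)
Your proposal is correct and follows essentially the same route as the paper: reduce the leading-coefficient relation componentwise to the PID and quotient-of-PID situations (Corollary~\ref{syzygy} plus the extra generator $N_\ell$ feeding the $r_{\alpha\alpha}$ terms, with the degenerate vanishing-component cases absorbed by the annihilator relations), then form $r'$ and conclude by the strict descent $w_{r'}<w_r$ exactly as in Theorems~\ref{syzygy-polynomial} and~\ref{syzygy-polynomial-quotient}. The only cosmetic difference is that you handle the factors $N_\ell=0$ and $N_\ell\neq0$ uniformly via the lifted relation, whereas the paper separates them into two explicit cases when specifying the coefficients $c_{ij\gamma}$ and $d_{ii\gamma}$.
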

\begin{proof}
Let $V$ be the 
submodule of $L$ generated by all $r_{\alpha\beta}$ and let $G=\syz(f_1,\dots,f_m)$. We need to prove that $G\subseteq V$. Let $r=\sum_{\alpha=1}^{m}h_\alpha g_\alpha\in G$.

Let $w_r=\max\{\lmi(h_\alpha)\LM(f_\alpha); \alpha=1,\dots,m\}$. Without loss of generality, we assume that $w_r=\lmi(h_\alpha)\LM(f_\alpha)$ for $\alpha=1,\dots,t$ and $\lmi(h_\alpha)\LM(f_\alpha)<w_r$ for $\alpha=t+1,\dots,m$. If $\lmi(h_\alpha)=v_\alpha$, then $w_r=u_\alpha v_\alpha e_\beta$ for $\alpha=1,\dots,t$, ($\LM(f_\alpha)=u_\alpha e_\beta$). We assume that $\gamma$-component of $\lc(f_\alpha)$ (resp. $\lci(h_\alpha)$) is $\lc(f_\alpha)_\gamma$ (resp. $\lci(h_\alpha)_\gamma$). The rest of the proof is similar to the proof of Theorem~\ref{syzygy-polynomial} and Theorem~\ref{syzygy-polynomial-quotient}. We consider 
$$
r^{'}=r-\sum_{i=1}^{t-1}\sum_{j=i+1}^{t}(c_{ij\gamma})w_{ij}r_{ij}-\sum_{i=1}^{t}(d_{ii\gamma})v_ir_{ii}.
$$
In the following we will say what are values of $c_{ij\gamma}$ and $d_{ii\gamma}$. For an arbitrary $\gamma$ we have two cases:

(i) $N_\gamma=0$. Therefore, in this component we deal with a PID. If for some $1\le\alpha\le t$ we have $\lci(h_\alpha)_\gamma=0$, then we have $c_{i\alpha\gamma}=c_{\alpha j\gamma}=d_{\alpha\alpha\gamma}=0$. If $\lci(h_\alpha)_\gamma\neq 0$ and $\lc(f_\alpha)_\gamma=0$, then $c_{i\alpha\gamma}=c_{\alpha j\gamma}=0$ and $d_{\alpha\alpha\gamma}=\lci(h_\alpha)_\gamma$. For remaining $1\le i,j\le t$, we have $d_{ii\gamma}=0$ and $c_{ij\gamma}$ can be obtained by using Corollary~\ref{syzygy}.

(ii) $N_\gamma\neq 0$. Therefore, in this component we deal with a quotient of a PID. Again the argument is similar to the case (i) when $\lci(h_\alpha)_\gamma=0$ or $\lci(h_\alpha)_\gamma\neq 0$ and $\lc(f_\alpha)_\gamma=0$. For the remaining $1\le i,j\le t$, similar to the proof of Theorem~\ref{syzygy-polynomial-quotient}, we have $c_{ij\gamma}=f_{ij}^{'}+N_\gamma R_\gamma$ and $d_{ii\gamma}=f_{it+1}+N_\gamma R_\gamma$.  
\end{proof}

\begin{corollary}\label{syzygy-monomial-general}
Let $U$ be a submodule of $F$ generated by terms $f_1,\dots,f_m$. Then $\syz(f_1,\dots,f_m)$ is generated by the relations $\ann(\lc(f_\alpha))g_\alpha$ and $r_{\alpha\beta}=u_{\alpha\beta}g_\alpha-u_{\beta\alpha}g_\beta$ for all $1\le\alpha<\beta\le m$ for which $f_\alpha$ and $f_\beta$ involve the same basis element.
\end{corollary}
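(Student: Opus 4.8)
The plan is to deduce this directly from Theorem~\ref{syzygy-polynomial-general}, exactly as Corollary~\ref{syzygy-monomial} was deduced from Theorem~\ref{syzygy-polynomial} and Corollary~\ref{syzygy-monomial-pid} from Theorem~\ref{syzygy-polynomial-pid}. First I would observe that when the generators are single terms, $f_1,\dots,f_m$ automatically form a Gr\"obner basis of $U$: each $f_\alpha$ equals its own leading term, so $\langle\LT(f_1),\dots,\LT(f_m)\rangle=\langle f_1,\dots,f_m\rangle$ already contains $\LT(U)$. Hence the hypotheses of Theorem~\ref{syzygy-polynomial-general} hold and $\syz(f_1,\dots,f_m)$ is generated by the elements $r_{\alpha\beta}$ (for $\alpha<\beta$ with $f_\alpha,f_\beta$ involving the same basis element) and $r_{\alpha\alpha}$ constructed there.

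The next step is to show that for terms the correction coefficients $q_{\alpha\beta,l}$ and $q_{\alpha\alpha,l}$ all vanish, so that $r_{\alpha\beta}$ collapses to $u_{\alpha\beta}g_\alpha-u_{\beta\alpha}g_\beta$ and $r_{\alpha\alpha}$ to $\ann(\lc(f_\alpha))g_\alpha$. Writing $f_\alpha=\lc(f_\alpha)\lm(f_\alpha)e_l$ and $f_\beta=\lc(f_\beta)\lm(f_\beta)e_l$ and substituting into the definition of $S_R$, one checks that the two summands are both equal to $\lcm(\lm(f_\alpha),\lm(f_\beta))\,\lcm(\lc(f_\alpha),\lc(f_\beta))\,e_l$, so $S_R(f_\alpha,f_\beta)=0$; its standard expression $\sum_l q_{\alpha\beta,l}f_l$ therefore has all $q_{\alpha\beta,l}=0$. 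Likewise $\ann(\lc(f_\alpha))f_\alpha=\bigl(\ann(\lc(f_\alpha))\lc(f_\alpha)\bigr)\lm(f_\alpha)e_l=0$ because $\ann(\lc(f_\alpha))\lc(f_\alpha)=0$ in $R$, so all $q_{\alpha\alpha,l}=0$. Plugging these vanishings into the formulas defining $r_{\alpha\beta}$ and $r_{\alpha\alpha}$ yields precisely the asserted generating set.

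The only delicate point — and the closest thing to an obstacle — is the componentwise bookkeeping in components $R_i/N_iR_i$ where $\lc(f_\alpha)_i$ or $\lc(f_\beta)_i$ is zero. There the convention $\frac{\lcm(a_i,b_i)}{a_i}+N_iR_i=\frac{\lcm(a_i,b_i)}{b_i}+N_iR_i=N_iR_i$ must be invoked, and one must check that the corresponding contribution to $S_R(f_\alpha,f_\beta)$ is still $0$ in that component (it is, being a multiple of $N_i$), and similarly that $\ann(\lc(f_\alpha))$ takes the right value on zero components so that $\ann(\lc(f_\alpha))\lc(f_\alpha)=0$ still holds there. This is routine once organized component by component, using that components with $N_i=0$ are domains; after it the corollary is immediate from Theorem~\ref{syzygy-polynomial-general}.
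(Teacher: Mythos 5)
Your proposal is correct and matches the paper's (implicit) argument: the paper states this corollary without a separate proof, relying on exactly the reasoning you give, which mirrors the proof of Corollary~\ref{syzygy-monomial} — for single terms the elements $S_R(f_\alpha,f_\beta)$ and $\ann(\lc(f_\alpha))f_\alpha$ vanish, so $f_1,\dots,f_m$ form a Gr\"obner basis, the correction coefficients $q_{\alpha\beta,l}$, $q_{\alpha\alpha,l}$ may be taken to be zero, and Theorem~\ref{syzygy-polynomial-general} gives the stated generators. Your componentwise check of the convention in the zero components is the same bookkeeping the paper's definitions are designed to absorb, so nothing further is needed.
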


\begin{theorem}\label{syzygy-grobner-general}
Let $U$ be a submodule of $F$ with Gr\"{o}bner basis $G=\{f_1,\dots,f_m\}$. Then the relations $r_{\alpha\beta}$ for $1\le\alpha\le\beta\le m$ form a Gr\"{o}bner basis of $\syz(f_1,\dots,f_m)$ with respect to Schreyer monomial order $<_{f_1,\dots,f_m}$. Moreover with this order we have $\LT(r_{\alpha\beta})=u_{\alpha\beta}g_\alpha$ when $\alpha<\beta$ and $\LT(r_{\alpha\alpha})=\ann(\lc(f_\alpha))g_\alpha$.
\end{theorem}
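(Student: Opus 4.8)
The plan is to follow, \emph{mutatis mutandis}, the proofs of Theorem~\ref{syzygy-grobner} and Theorem~\ref{syzygy-grobner-quotient}, replacing the single-ring $S^{'}$-element machinery by the componentwise version packaged in Lemma~\ref{pre-main-lemma1-general}, Theorem~\ref{syzygy-polynomial-general} and Corollary~\ref{syzygy-monomial-general}.

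First I would settle the leading-term formulas. For $\alpha<\beta$ with $f_\alpha,f_\beta$ involving the same basis element, the identity $\lmi(u_{\alpha\beta})\LM(f_\alpha)=\lmi(u_{\beta\alpha})\LM(f_\beta)$ together with the tie-breaking convention of the Schreyer order $<_{f_1,\dots,f_m}$ (smaller index wins) forces $\LT(u_{\alpha\beta}g_\alpha-u_{\beta\alpha}g_\beta)=u_{\alpha\beta}g_\alpha$; and since $\lmi(q_{\alpha\beta,l})\LM(f_l)\le\LM(S_R(f_\alpha,f_\beta))<\lmi(u_{\alpha\beta})\LM(f_\alpha)$, each $q_{\alpha\beta,l}g_l$ is strictly $<_{f_1,\dots,f_m}$-smaller than $u_{\alpha\beta}g_\alpha$, so $\LT(r_{\alpha\beta})=u_{\alpha\beta}g_\alpha$. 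For the diagonal relations one uses $\ann(\lc(f_\alpha))\cdot\lc(f_\alpha)=0$ in $R$, whence $\LM(\ann(\lc(f_\alpha))f_\alpha)<\LM(f_\alpha)$; combined with $\lmi(q_{\alpha\alpha,l})\LM(f_l)\le\LM(\ann(\lc(f_\alpha))f_\alpha)$ this gives $q_{\alpha\alpha,l}g_l<_{f_1,\dots,f_m}g_\alpha$, hence $\LT(r_{\alpha\alpha})=\ann(\lc(f_\alpha))g_\alpha$ (when $\lc(f_\alpha)$ is a nonzerodivisor in every component the diagonal relation is simply $0$ and may be discarded).

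Next I would prove the Gr\"obner basis property by the standard leading-term argument. Take an arbitrary $g=\sum_{\alpha=1}^{m}h_\alpha g_\alpha\in V:=\syz(f_1,\dots,f_m)$, put $\LT(h_\alpha g_\alpha)=a_\alpha v_\alpha g_\alpha$, and let $\LT(g)=a_iv_ig_i$; since distinct $g_\alpha$ lie on distinct basis elements of $L$ there is no cancellation among the top terms, so $i$ is the smallest index in the set $\mathcal{S}=\{\alpha:\LT(v_\alpha f_\alpha)=\LT(v_if_i)\}$. Form the top slice $g^{'}=\sum_{\alpha\in\mathcal{S}}a_\alpha v_\alpha g_\alpha$. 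Substituting $\LT(f_\alpha)$ for $g_\alpha$ makes $g^{'}$ vanish, so $(a_\alpha v_\alpha)_{\alpha\in\mathcal{S}}$ is a syzygy of the \emph{terms} $\{\LT(f_\alpha)\}_{\alpha\in\mathcal{S}}$; by Corollary~\ref{syzygy-monomial-general} it is therefore an $S$-linear combination of the generators $u_{tl}g_t-u_{lt}g_l$ ($t<l$ in $\mathcal{S}$) and $\ann(\lc(f_\alpha))g_\alpha$ ($\alpha\in\mathcal{S}$). Taking $<_{f_1,\dots,f_m}$-leading terms of both sides, and using that $i=\min\mathcal{S}$ so that only the generators $u_{i\beta}g_i-u_{\beta i}g_\beta$ and $\ann(\lc(f_i))g_i$ can contribute a term on the basis vector $g_i$ at the top level, we obtain that $\LT(g)=\LT(g^{'})=a_iv_ig_i$ lies in $\langle\LT(r_{i\beta}),\LT(r_{ii})\rangle\subseteq\langle\LT(r_{\alpha\beta}):1\le\alpha\le\beta\le m\rangle$. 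Since $g$ was arbitrary, this is precisely the assertion that $\{r_{\alpha\beta}\}$ is a Gr\"obner basis of $V$ for $<_{f_1,\dots,f_m}$.

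The one step that genuinely needs care is the reduction of the top slice $g^{'}$ to a syzygy of leading terms in the mixed setting: in the PID components ($N_\gamma=0$) one invokes Corollary~\ref{syzygy} on a finite set of irreducibles viewed as a permutable weak regular sequence, while in the quotient components ($N_\gamma\ne 0$) one must insert the dummy coordinate $d_{t+1}=N_\gamma$ exactly as in Theorem~\ref{syzygy-polynomial-quotient}, and check that the contribution collapses to an annihilator generator $\ann(\lc(f_\alpha))g_\alpha$ whenever $\lc(f_\alpha)$ vanishes in component $\gamma$. All of this bookkeeping, however, has already been carried out in Lemma~\ref{pre-main-lemma1-general} and Theorem~\ref{syzygy-polynomial-general}, so here it is a matter of quoting those results rather than redoing the computation.
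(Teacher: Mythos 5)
Your proof is correct and is essentially the argument the paper intends: the paper states Theorem~\ref{syzygy-grobner-general} without proof, leaving it as the direct adaptation of the proof of Theorem~\ref{syzygy-grobner}, and your write-up carries out exactly that adaptation (leading-term identification via the Schreyer tie-break and the standard-expression inequalities, then the top-slice argument reducing to a syzygy of leading terms handled by Corollary~\ref{syzygy-monomial-general}, with the diagonal relations $r_{\alpha\alpha}$ accounting for zero-divisor leading coefficients). No gaps to report.
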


\begin{theorem}[Hilbert's syzygy theorem for finite direct product]\label{hilbert-general}
Let $M$ be a finitely generated $S$-module. Then $M$ admits a free $S$-resolution
$$
\dots\to F_p\to F_p\to F_p\to F_{p-1}\dots\to F_1\to F_0\to M\to 0
$$
where $p\le n+1$.
\end{theorem}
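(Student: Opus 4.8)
The plan is to follow the proofs of Theorems~\ref{hilbert-pid} and~\ref{hilbert-quotient} almost verbatim, replacing their uses of the B\'ezout identity of a PID (resp.\ of a quotient of a PID) by the fact that $R=\prod_{i=1}^{p}R_i/N_iR_i$ is a principal ideal ring — every ideal of a finite product of principal ideal rings being principal, since it is a product of principal ideals — and replacing their syzygy inputs by Theorem~\ref{syzygy-polynomial-general}, Theorem~\ref{syzygy-grobner-general} and Corollary~\ref{syzygy-monomial-general}. Writing $M\cong F/U$ with $F$ a finitely generated free $S$-module, it suffices to exhibit a free $S$-resolution of $U$ of the asserted shape whose periodic tail begins at level $\le n$; adjoining the presentation $0\to U\to F\to M\to 0$ then gives the bound $p\le n+1$ for $M$.

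First I would fix a Gr\"obner basis $f_1,\dots,f_m$ of $U$, let $t$ be the largest integer such that none of $x_1,\dots,x_t$ appears in any $\LM(f_i)$, and induct on $n-t$. For the inductive step $t<n$ one relabels $f_1,\dots,f_m$ as in \cite[Corollary 4.17]{ene2011gr}; by Theorem~\ref{syzygy-grobner-general} the relations $r_{\alpha\beta}$ ($1\le\alpha\le\beta\le m$) form a Gr\"obner basis of $\syz(f_1,\dots,f_m)$ for the Schreyer order, and, exactly as in \cite[Corollary 4.17]{ene2011gr}, the labelling can be arranged so that none of $x_1,\dots,x_{t+1}$ occurs in the leading monomials of this Gr\"obner basis. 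Thus $\syz(f_1,\dots,f_m)$ is again of this type but with $t$ increased by one, so by induction it has a free resolution whose periodic tail starts at level $\le n-t-1$; prepending the surjection $F_0\to U$ with kernel $\syz(f_1,\dots,f_m)$ produces the required resolution of $U$ with tail starting at level $\le n-t\le n$.

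It remains to settle the base case $t=n$. Here $\LT(f_i)=c_ie_{j_i}$ with $c_i\in R$, so for each basis element $e_j$ that occurs I would choose $r_l\in R$ with $\sum_l r_lc_{i_l}$ a generator $d_j$ of the principal ideal $\langle c_{i_l}:\LM(f_{i_l})=e_j\rangle$ and replace the corresponding $f_i$ by $g_j=\sum_l r_lf_{i_l}$; just as in Theorem~\ref{hilbert-pid} this yields a Gr\"obner basis $g_1,\dots,g_\ell$ of $U$ with $\LT(g_\alpha)=d_\alpha e_{j_\alpha}$ and the $j_\alpha$ pairwise distinct. Since no two $g_\alpha$ involve the same basis element, the relations $r_{\alpha\beta}$ for $\alpha<\beta$ are absent, and by Theorem~\ref{syzygy-polynomial-general} (and Theorem~\ref{syzygy-grobner-general} for the leading terms) $\syz(g_1,\dots,g_\ell)$ has a Gr\"obner basis $r_{11},\dots,r_{\ell\ell}$ with $\LT(r_{\alpha\alpha})=\ann(\lc(g_\alpha))h_\alpha$, again with pairwise distinct basis elements. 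Now $\ann(\lc(g_\alpha))$ is a principal ideal of $R$ (a product of principal annihilator ideals), and after this first syzygy step the leading coefficient in each PID-component of $R$ is $0$ or a unit while that in each quotient-component divides the corresponding $N_i$; since $a\mapsto\ann(a)$ is an involution on $\{0,\text{unit}\}$ and on the divisors of $N_i$, iterating Theorem~\ref{syzygy-grobner-general} produces Gr\"obner bases whose leading terms cycle with period dividing $2$. Padding each stage with free summands mapped to $0$, exactly as in Theorem~\ref{hilbert-quotient}, one may take all free modules in this tail equal, obtaining a free resolution of $U$ of the form $\cdots\to F_q\to F_q\to F_q\to\cdots\to F_0\to U\to 0$ with $q\le 1$. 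Splicing this with the inductive step and with $0\to U\to F\to M\to 0$ gives the stated resolution of $M$ with $p\le n+1$.

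The steps that are pure transcription — that the merged $g_1,\dots,g_\ell$ is still a Gr\"obner basis, the Schreyer-order relabelling of \cite[Corollary 4.17]{ene2011gr}, and the rank and map bookkeeping in the padded periodic tail — go exactly as in Theorems~\ref{hilbert-pid} and~\ref{hilbert-quotient}. I expect the only genuinely delicate point, and hence the main obstacle, to be the base case $t=n$: one must verify that after the first syzygy step every relevant leading coefficient has, in each component of $R$, become either $0$, a unit, or a divisor of $N_i$, so that the annihilator operation is truly involutive there and the resolution is $2$-periodic from that level on (rather than merely eventually periodic with an a priori unbounded period); granting this, the length bound $p\le n+1$ and the displayed form of the resolution follow as in the earlier theorems.
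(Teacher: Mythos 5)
Your proposal is correct and follows essentially the same route as the paper: reduce to the base case $t=n$ by the induction of Theorem~\ref{hilbert-pid} via Theorem~\ref{syzygy-grobner-general} and \cite[Corollary 4.17]{ene2011gr}, use the componentwise B\'ezout identity (equivalently, principality of ideals in $R$) to merge the constant-leading-term generators into a Gr\"obner basis with distinct basis elements, and then handle the $\ann(\lc(g_\alpha))$-relations as in Theorem~\ref{hilbert-quotient} to get the periodic tail. In fact you supply more detail than the paper's own sketch, notably the observation that $\ann$ is involutive on the relevant components so the tail is genuinely $2$-periodic after the first syzygy step.
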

\begin{proof}
Regarding the proof of Theorem~\ref{hilbert-pid}, it is enough to do the last step when there are no variables in the leading term of $f_1,\dots,f_m$ and they form a Gr\"{o}bner basis for $U$. In each component of elements in $R$ we have B\'ezout identity. Then we have B\'ezout identity on elements of $R$. Therefore similarly to the proof of Theorem~\ref{hilbert-pid}, we obtain $g_i$ which form a Gr\"{o}bner basis for $U$. If all $g_i$ are nonzero divisors then we are done. Otherwise, the argument is similar to Theorem~\ref{hilbert-quotient}.
\end{proof}

\begin{corollary}[Hilbert's syzygy theorem for finite direct product]
Let $M$ be a finitely generated $S$-module. If $M\cong F/U$ and the leading coefficients of a Gr\"{o}bner basis of $U$ with a monomial order all are nonzero divisors then $M$ admits a free $S$-resolution
$$
0\to F_p\to F_{p-1}\dots\to F_1\to F_0\to M\to 0
$$
of length $p\le n+1$.
\end{corollary}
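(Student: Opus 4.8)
The plan is to re-run the induction from the proof of Theorem~\ref{hilbert-pid}, this time carrying along the extra bookkeeping that, at every stage, the leading coefficients of the Gr\"obner basis remain nonzero divisors. This is exactly what excludes the degenerate periodic resolution that occurs in Theorems~\ref{hilbert-quotient} and~\ref{hilbert-general}. Write $M\cong F/U$ with $F$ a finitely generated free $S$-module; since $\ker(F\to M)=U$, it suffices to build a free $S$-resolution of $U$ of length $\le n$ and splice it with the presentation $0\to U\to F\to M\to 0$. Fix a Gr\"obner basis $f_1,\dots,f_m$ of $U$ whose leading coefficients are nonzero divisors in $R=\prod_{i=1}^p R_i/N_iR_i$. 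Observe first that an element of $R$ is a nonzero divisor precisely when each of its components is, that in a component with $N_i\neq 0$ a nonzero divisor of $R_i/N_iR_i$ must be a unit, and that in a component with $N_i=0$ it is simply a nonzero element of the PID $R_i$; in either case the annihilator of such a component is $0$, so $\ann(\lc(f_i))=0$ for every $i$.

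The first step is to check that this hypothesis is inherited by syzygy modules. By Theorem~\ref{syzygy-grobner-general}, $\syz(f_1,\dots,f_m)$ has the Schreyer Gr\"obner basis given by the $r_{\alpha\beta}$; the relations $r_{\alpha\alpha}$ vanish since $\ann(\lc(f_\alpha))=0$, and for $\alpha<\beta$ one has $\LT(r_{\alpha\beta})=u_{\alpha\beta}g_\alpha$, so $\lc(r_{\alpha\beta})$ is the element of $R$ whose $i$th component is $\frac{\lcm(a_i,b_i)}{a_i}+N_iR_i$, where $(a_i+N_iR_i)=\lc(f_\alpha)$ and $(b_i+N_iR_i)=\lc(f_\beta)$. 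In a component with $N_i\neq 0$ both $a_i$ and $b_i$ are units, hence $\lcm(a_i,b_i)$, and therefore $\frac{\lcm(a_i,b_i)}{a_i}$, is a unit; in a component with $N_i=0$ both are nonzero, hence so is $\frac{\lcm(a_i,b_i)}{a_i}$. Thus $\lc(r_{\alpha\beta})$ is again a nonzero divisor, and the Gr\"obner basis of $\syz(f_1,\dots,f_m)$ again satisfies the hypothesis. Combined with the Schreyer-order argument of~\cite[Corollary 4.17]{ene2011gr} used in Theorem~\ref{hilbert-pid}, passing to the syzygy module simultaneously raises by one the number of variables absent from all leading terms and preserves the nonzero-divisor property of the leading coefficients.

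The remaining task is the base case, where no variable occurs in any leading term of the current Gr\"obner basis $f_1,\dots,f_m$, so that $\LM(f_i)=e_{j(i)}$ for each $i$. Using the componentwise B\'ezout identity, for each basis vector $e_j$ occurring among the $\LM(f_i)$ replace the relevant $f_i$ by a single $R$-linear combination $g_j$ with $\lc(g_j)$ a greatest common divisor of the corresponding $\lc(f_i)$; as a gcd of nonzero divisors (a gcd of units in each component with $N_i\neq 0$, of nonzero elements otherwise), $\lc(g_j)$ is a nonzero divisor, and $\LM(g_j)=e_j$. Exactly as in Theorem~\ref{hilbert-pid}, the $g_j$ again form a Gr\"obner basis of $U$. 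If $\sum_j h_jg_j=0$ is a relation, then for each $j$ with $h_j\neq 0$ we get $\LM(h_jg_j)=\lmi(h_j)\LM(g_j)=\lmi(h_j)e_j$ because $\lc(g_j)$ is a nonzero divisor; as the $e_j$ are pairwise distinct, the top monomials of the nonzero summands are pairwise distinct, so nothing cancels and the relation is trivial. Hence the syzygy module of the $g_j$ is zero, $U$ is free at the base case, and unwinding the induction as in Theorem~\ref{hilbert-pid} yields a free $S$-resolution of $U$ of length $\le n$, and so one of $M$ of length $\le n+1$.

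The step I expect to require the most care --- indeed the only one with genuine content --- is the inheritance claim of the second step, that $\lc(u_{\alpha\beta})$ is again a nonzero divisor; this is exactly where the hypothesis is used. Once it is in place, finiteness of the resolution is automatic, because $\ann(\lc(-))$ is identically zero at every stage, the relations $r_{\alpha\alpha}$ never appear, and the periodic tails of Theorems~\ref{hilbert-quotient} and~\ref{hilbert-general} cannot arise.
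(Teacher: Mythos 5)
Your proof is correct and follows essentially the same route the paper intends for this corollary: re-run the induction of Theorem~\ref{hilbert-pid} within the setting of Theorem~\ref{hilbert-general}, noting that nonzero-divisor leading coefficients force $\ann(\lc(f_\alpha))=0$, so the relations $r_{\alpha\alpha}$ and hence the periodic tail of Theorems~\ref{hilbert-quotient} and~\ref{hilbert-general} never appear and the resolution terminates at the base case. Your explicit verification that the nonzero-divisor property is inherited by the syzygy Gr\"obner basis (via $\lc(u_{\alpha\beta})$ being a componentwise lcm-quotient of units, respectively nonzero elements) and that the base-case elements $g_j$ have zero syzygies is precisely the bookkeeping the paper leaves implicit, and it is carried out correctly.
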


In the rest of this section $R=\prod_{i\in\mathcal{A}}R_i/N_iR_i$, where $\mathcal{A}$ is an infinite set and $R_i$ are PIDs. Before we state the main result we make the following observation.

\begin{remark}
If $U$ is a submodule of $F$ and $\LT(U)$ is finitely generated then we have the same $r_{\alpha\beta}$. We have a similar result to Theorem~\ref{syzygy-polynomial-general} with a similar proof. We should also observe that Buchberger's criterion is valid for infinite direct products. In the proof of this theorem we just need to explain that since the number of $S_R$-elements is finite, at the end we can obtain a finite sum of these elements. 
  
\end{remark}

Then we have the following results.

\begin{theorem}[Hilbert's syzygy theorem for direct product]\label{hilbert-direct-product}
Let $M$ be a finitely generated $S$-module. Suppose $M\cong F/U$ and $\LT(U)$ with a monomial order is finitely generated. Then $M$ admits a free $S$-resolution
$$
\dots\to F_p\to F_p\to F_p\to F_{p-1}\dots\to F_1\to F_0\to M\to 0
$$
where $p\le n+1$.
\end{theorem}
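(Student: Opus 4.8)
The plan is to run the inductive scheme of Theorem~\ref{hilbert-pid} and Theorem~\ref{hilbert-general} essentially verbatim, the only genuinely new input being that every module occurring in the argument is handled by the \emph{finitely generated leading term} versions of the machinery recorded in the Remark above. First I would write $M\cong F/U$ with $F$ a finitely generated free $S$-module, so that it suffices to produce the claimed resolution of $U$. Since $\LT(U)$ is finitely generated, $U$ has a \emph{finite} Gr\"{o}bner basis $f_1,\dots,f_m$, and by the Remark, Buchberger's criterion, the analogue of Theorem~\ref{syzygy-polynomial-general}, and Theorem~\ref{syzygy-grobner-general} all apply to $U$ over the infinite product $R$.

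Next I would induct on $n-t$, where $t$ is the largest integer such that none of the variables $x_1,\dots,x_t$ occurs in any leading term of $f_1,\dots,f_m$. In the base case $t=n$ every leading monomial is $1$, so for each basis vector $e_j$ the finitely many $f_i$ whose leading term involves $e_j$ can be combined, via the B\'ezout identity in $R$ (which holds because it holds in each factor $R_i/N_iR_i$ and membership in a finitely generated ideal is checked componentwise), into a single $g_j$ with $\langle\LT(g_j)\rangle$ equal to the $e_j$-part of $\LT(U)$; then $g_1,\dots,g_r$ is again a finite Gr\"{o}bner basis of $U$. If every $\lc(g_j)$ is a nonzerodivisor, $\syz(g_1,\dots,g_r)=0$ and the resolution terminates; otherwise, exactly as in Theorem~\ref{hilbert-quotient}, the syzygies of the $g_j$ are generated by the finitely many $\ann(\lc(g_j))h_j$ with $\lc(g_j)$ a zerodivisor, whose own syzygies are again generated by annihilator relations of the same shape, producing the periodic tail $\dots\to F_p\to F_p\to F_p\to\cdots$.

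For the inductive step $t<n$ I would relabel the (finite) Gr\"{o}bner basis as in~\cite[Corollary 4.17]{ene2011gr} and invoke Theorem~\ref{syzygy-grobner-general}: with the Schreyer order $<_{f_1,\dots,f_m}$ the relations $r_{\alpha\beta}$ form a finite Gr\"{o}bner basis of $\syz(f_1,\dots,f_m)$ whose leading terms $u_{\alpha\beta}g_\alpha$ and $\ann(\lc(f_\alpha))g_\alpha$ avoid $x_1,\dots,x_{t+1}$. In particular $\LT(\syz(f_1,\dots,f_m))$ is finitely generated, so the induction hypothesis applies to $\syz(f_1,\dots,f_m)$ and furnishes a free resolution of it of length $\le n-t-1$; splicing this onto $0\to\syz(f_1,\dots,f_m)\to\bigoplus S\to U\to 0$ gives a resolution of $U$ of length $\le n-t\le n$, hence one of $M$ of length $\le n+1$.

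The main obstacle I anticipate is bookkeeping rather than a new conceptual difficulty: one must check that finite generation of the leading term module is genuinely inherited by $\syz(f_1,\dots,f_m)$ at each stage (this is precisely what keeps the Remark's versions of Buchberger's criterion and the syzygy theorem applicable all the way down the induction), and that the B\'ezout identity really transfers from the possibly infinitely many factors $R_i/N_iR_i$ to $R$ for \emph{finitely} generated ideals. Both points are routine, but they are the places where the infinite index set $\mathcal{A}$ could a priori cause trouble; the infinite periodic part of the resolution in the presence of zerodivisors is then inherited word for word from the quotient-of-PID case.
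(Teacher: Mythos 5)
Your proposal is correct and follows essentially the same route as the paper, which (via the Remark preceding the theorem) reduces everything to the finite Gr\"{o}bner basis $f_1,\dots,f_m$ guaranteed by the finite generation of $\LT(U)$ and then reruns the induction of Theorems~\ref{hilbert-pid}, \ref{hilbert-quotient} and \ref{hilbert-general}, with the componentwise B\'ezout identity and the annihilator relations supplying the periodic tail. Your explicit checks that finite generation of the leading term module passes to $\syz(f_1,\dots,f_m)$ (via Theorem~\ref{syzygy-grobner-general}) and that the B\'ezout identity assembles componentwise over the infinite index set are exactly the points the paper leaves implicit.
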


\begin{corollary}[Hilbert's syzygy theorem for direct product]
Let $M$ be a finitely generated $S$-module. If $M\cong F/U$, $LT(U)$ is finitely generated and the leading coefficients of a Gr\"{o}bner basis all are nonzero divisors, then $M$ admits a free $S$-resolution
$$
0\to F_p\to F_{p-1}\dots\to F_1\to F_0\to M\to 0
$$
of length $p\le n+1$.
\end{corollary}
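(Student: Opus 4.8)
The plan is to retrace the induction that proves Theorem~\ref{hilbert-direct-product} (which itself follows the pattern of Theorem~\ref{hilbert-pid}) and to observe that the extra hypothesis --- that all leading coefficients of a Gr\"obner basis of $U$ are nonzero divisors --- is inherited by the Gr\"obner bases of the successive syzygy modules and forces the resolution to be \emph{finite}. As usual it suffices to show that $U$ admits a finite free $S$-resolution of length $\le n$; splicing with $0\to U\to F\to M\to 0$ then yields the resolution of $M$ of length $\le n+1$. Since $\LT(U)$ is finitely generated, $U$ has a finite Gr\"obner basis $f_1,\dots,f_m$, and by the remark preceding Theorem~\ref{hilbert-direct-product} all results of this section --- in particular Theorem~\ref{syzygy-grobner-general} and Corollary~\ref{syzygy-monomial-general} --- apply to it.

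The first point to record is that in $R=\prod_i R_i/N_iR_i$ an element all of whose components are nonzero divisors has trivial annihilator: in a factor with $N_i=0$ such a component is a nonzero element of a domain; in a factor with $N_i\neq 0$ it is a unit of $R_i/N_iR_i$; in either case its annihilator is $0$. Hence, under our hypothesis, every $\ann(\lc(f_i))$ is $0$, the relations $r_{\alpha\alpha}$ of Theorem~\ref{syzygy-grobner-general} vanish, and $\syz(f_1,\dots,f_m)$ has a Gr\"obner basis consisting solely of the $r_{\alpha\beta}$ with $\alpha<\beta$, whose leading terms are $u_{\alpha\beta}g_\alpha$. The second point is that this new Gr\"obner basis again has nonzero-divisor leading coefficients: the $i$th component of $\lc(r_{\alpha\beta})$ is $\frac{\lcm(a_i,b_i)}{a_i}+N_iR_i$, where $(a_i+N_iR_i)=\lc(f_\alpha)$ and $(b_i+N_iR_i)=\lc(f_\beta)$; when $N_i=0$ this is nonzero since $a_i,b_i\neq 0$, and when $N_i\neq 0$ both $a_i,b_i$ are units so $\lcm(a_i,b_i)/a_i$ is a unit. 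The monomial factor $\frac{\lcm(u_\alpha,u_\beta)}{u_\alpha}$ of $u_{\alpha\beta}$ lies in $S$ and does not affect the leading coefficient. Thus the nonzero-divisor hypothesis propagates through the Schreyer construction, one step at a time.

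With these two observations the induction of Theorem~\ref{hilbert-pid} goes through and terminates. Let $t$ be the largest index for which $x_1,\dots,x_t$ occur in no $\LT(f_i)$, and induct on $n-t$. If $t=n$, every $\LT(f_i)$ lies in some $Re_j$; since $R$ has B\'ezout identity componentwise, hence in $R$, we combine, for each $j$, the $f_i$ with leading term in $Se_j$ into a single $g_j$ with $\lc(g_j)$ the gcd of the corresponding $\lc(f_i)$ --- still a nonzero divisor, as a gcd of nonzero elements of a domain is nonzero and a gcd of units is a unit. The $g_j$ form a Gr\"obner basis of $U$ with at most one leading term per basis element and with $\ann(\lc(g_j))=0$, so Corollary~\ref{syzygy-monomial-general} gives $\syz(g_1,\dots,g_r)=0$; hence $\bigoplus S\to U$ is injective and $U$ has a free resolution of length $0\le n$. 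If $t<n$, relabel the Gr\"obner basis as in \cite[Corollary 4.17]{ene2011gr}; then by Theorem~\ref{syzygy-grobner-general} and that relabelling, $\syz(f_1,\dots,f_m)$ has a Gr\"obner basis in which $x_1,\dots,x_{t+1}$ occur in no leading term, and by the second observation above its leading coefficients are again nonzero divisors, so the induction hypothesis gives it a finite free resolution of length $\le n-t-1$; prepending $\bigoplus_{i=1}^m S\to U$ makes $U$ a finite free resolution of length $\le n-t\le n$.

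The routine verifications --- the relabelling indeed removes $x_{t+1}$ from the syzygy leading terms, the gcd combinations preserve $\LT(U)$, and the splicing bookkeeping --- are exactly as in Theorems~\ref{hilbert-pid}, \ref{hilbert-quotient} and \ref{hilbert-general}. The only genuinely new input, and the point I expect to require the most care, is the \emph{persistence of the nonzero-divisor property} of the leading coefficients under the Schreyer step and under the B\'ezout recombination in the base case (so in particular that annihilators stay trivial); this is precisely what replaces the periodic tail of Theorem~\ref{hilbert-direct-product} by an honest finite resolution.
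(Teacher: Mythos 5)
Your argument is correct and follows essentially the same route as the paper's (implicit) proof of this corollary: the nonzero-divisor hypothesis makes every component of each $\lc(f_\alpha)$ a unit in the quotient factors and a nonzero element in the PID factors, so all $\ann(\lc(f_\alpha))$ vanish, the relations $r_{\alpha\alpha}$ (the source of the periodic tail in Theorems~\ref{hilbert-quotient}, \ref{hilbert-general} and \ref{hilbert-direct-product}) disappear, the property persists through the Schreyer step and the B\'ezout recombination, and the induction of Theorem~\ref{hilbert-pid} then yields a finite resolution of length $\le n+1$. One minor slip: in the base case $t=n$ the vanishing of $\syz(g_1,\dots,g_r)$ should be deduced from Theorem~\ref{syzygy-polynomial-general} (or Theorem~\ref{syzygy-grobner-general}) rather than Corollary~\ref{syzygy-monomial-general}, since the $g_j$ are not single terms; the conclusion is unaffected.
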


\section{Gr\"{o}bner bases for solvable rings}

\begin{definition}
A ring $R$ is solvable when given $a,a_1,\dots,a_m\in R$, we can determine whether $a\in\langle a_1,\dots,a_m\rangle$ and if it is, to find $b_1,\dots,b_m\in R$ such that $a=a_1b_1+\dots+a_mb_m$. 
\end{definition}

Note that every Euclidean domain is solvable.

\begin{remark}
If $R$ is solvable, then $R/NR$ is solvable. Because the question $a+NR\in\langle a_1+NR,\dots,a_k+NR\rangle$ and finding solutions goes to the question $a\in\langle a_1,\dots,a_k,N\rangle$ and finding solutions.  
\end{remark}

Now we state the division algorithm on solvable rings.

\begin{lemma}\label{division-algorithm}
Let $R$ be a solvable ring and $E=(f_1,\dots,f_m)$ be an ordered $m$-tuple of elements of $F$, and let $f\in F$. Consider the following algorithm:
	
1. Order $m$-tuples $(i_1,\dots,i_m)$ ($i_j\in\{0,1\}$) lexicographically.
	
2. $g_1=g_2=\dots=g_m=0$, $r=0$, and $g=f$.
	
3. If $\LT(g)$ is generated by $\LT(f_i)$, then find smallest $(i_1,\dots,i_m)$ ($i_j\in\{0,1\}$) such that $\LT(g)=\sum_{j=1}^{m}r_jh_ji_j\LT(f_j)$ ($h_j$ are monomials). Replace $g_j$ by $g_j+r_jh_ji_j$ and $g$ by $g-\sum_{j=1}^{m}r_jh_ji_jf_j$. 
	
4. Repeat step 3 until $\LT(g)$ is not generated by $\LT(f_j)$. Then replace $r$ by $r+\LT(g)$ and $g$ by $g-\LT(g)$.
	
5. If now $g\neq 0$, start again with step 3. If $g=0$ stop the algorithm.
	
This is an algorithm which gives us an $m$-tuple $(g_1,\dots,g_m)\in S^m$ and an $r\in F$ with
$$
f=g_1f_1+\dots+g_mf_m+r,
$$
and such that the following conditions are satisfied.
	
a. If $r\neq 0$, then none of the terms of $r$ is generated by $\LT(f_1),\dots,\LT(f_m)$.
	
b. We have $\LM(f)\ge\lmi(g_i)\LM(f_i)$. 
	
We call $r$ a remainder of $f$ on division by $E$ and denote it by $\overline{f}^E$.
\end{lemma}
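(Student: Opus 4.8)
The plan is to analyze this as a classical division procedure, establishing in turn that (i) each pass through Step~3 is executable, (ii) the algorithm terminates, (iii) the output satisfies the displayed identity, and (iv) conditions (a) and (b) hold. For (i), write $\LT(g)=c\,w\,e_\ell$ with $c\in R$. Saying that $\LT(g)$ is generated by $\LT(f_1),\dots,\LT(f_m)$ means $\LT(g)=\sum_j p_j\LT(f_j)$ for some $p_j\in S$, and collecting the terms that actually contribute the monomial $we_\ell$ shows this is equivalent to $c$ lying in the $R$-ideal generated by the finite set $\{\lc(f_j):\LM(f_j)\mid\LM(g)\}$. Since $R$ is solvable this membership is decidable, and when it holds a witness $\sum_jr_j\lc(f_j)=c$ (with $h_j:=w/w_j$ where $\LM(f_j)=w_je_\ell$) can be produced; among the finitely many $(i_1,\dots,i_m)\in\{0,1\}^m$ there is then a lexicographically least one admitting such a witness. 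A short minimality argument shows that for this least vector every $j$ in its support satisfies $\LM(f_j)\mid\LM(g)$ and $r_j\lc(f_j)\neq0$: otherwise that $j$ could be removed from the support---its contribution to $\LT(g)=\sum_jr_jh_ji_j\LT(f_j)$ being either $0$ or cancelling inside a coefficient group attached to a monomial $\neq\LM(g)$---contradicting minimality.

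For (ii) I claim each execution of Step~3 and each execution of Step~4 strictly decreases $\LM(g)$; this is clear for Step~4, and for Step~3 one writes $\sum_jr_jh_ji_jf_j=\sum_jr_jh_ji_j\LT(f_j)+\sum_jr_jh_ji_j\bigl(f_j-\LT(f_j)\bigr)$, where by (i) the first sum equals $\LT(g)$ exactly while every monomial occurring in the second sum has the form $h_j\cdot(\text{monomial}<\LM(f_j))<h_j\LM(f_j)=\LM(g)$. Hence $g-\sum_jr_jh_ji_jf_j$ has all its monomials strictly below $\LM(g)$. Since a monomial order on $F$ is a well-order on the monomials of $F$, the value $\LM(g)$ can strictly decrease only finitely many times, so the algorithm halts, and it halts only with $g=0$.

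For (iii) and (iv) I would carry the loop invariant $f=g_1f_1+\dots+g_mf_m+r+g$, which holds at initialization and is preserved by Steps~3 and~4 by inspection, so at termination $g=0$ yields the identity. For (a): $r$ is modified only in Step~4, which appends $\LT(g)$ precisely when $\LT(g)$ is not generated by $\LT(f_1),\dots,\LT(f_m)$; successive appended terms have strictly decreasing monomials (by (ii)), so no cancellation occurs within $r$, and every term of the final $r$ is one of these non-reducible leading terms. For (b): $g_i$ is modified only in Step~3, by adding a term $r_ih_i$ with $h_i\LM(f_i)=\LM(g)$ for the current $g$ and with $r_i\neq0$; as $\LM(g)$ begins at $\LM(f)$ and only decreases, the monomials $h_i$ accumulated in $g_i$ are strictly decreasing, hence distinct, so $\lmi(g_i)$ is the largest (first) of them and $\lmi(g_i)\LM(f_i)\le\LM(f)$.

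The only step that is not pure bookkeeping is the exact cancellation in Step~3 over a ring with zero divisors: a priori $\LM(r_jh_jf_j)$ may fall below $h_j\LM(f_j)$, and the terms $r_jh_j\LT(f_j)$ could carry monomials above $\LM(g)$ that cancel only among themselves, which would destroy the strict decrease of $\LM(g)$ and thus termination. The lexicographic minimization in Step~1 is exactly what rules this out, forcing the chosen combination to be supported at the monomial $\LM(g)$, so that the leading term of $g$ is annihilated; with that in place the remaining arguments run as in the classical case over a field.
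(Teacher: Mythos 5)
Your proof is correct and follows essentially the route the paper intends: the paper states the algorithm and records only the key point that each pass strictly lowers the leading term of $g$, which is exactly the invariant you verify, and the remaining bookkeeping (loop invariant, conditions (a) and (b), solvability giving decidability of the coefficient ideal membership) is the standard division-algorithm argument. Your extra care in Step 3 over a ring with zero divisors---using the lexicographic minimality of $(i_1,\dots,i_m)$ to force every witness for the chosen support to be concentrated at $\LM(g)$, so the subtraction cannot create monomials $\ge\LM(g)$---is a detail the paper leaves implicit, and your argument for it is sound (with the minor precision that when a term cancels inside a group attached to a monomial different from $\LM(g)$, one removes that whole group, which still yields a lexicographically smaller support vector).
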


The algorithm terminates in finitely many steps, because in each step the leading term becomes strictly smaller.

In the rest of this section we fix $R=\prod_{i=1}^{p}R_i/N_iR_i$, where $R_i$ are PID and $N_i\in R_i$, moreover $N_i$ is not a unit but it is not necessarily nonzero. Then $R$ is a direct product of PIDs and quotients of PIDs. We also assume that every $R_i$ is solvable.

Now we can state Buchberger's algorithm.

\begin{theorem}\label{buchberger-algorithm}
Let $f_1,\dots,f_m\in F$. Consider the following algorithm:
	
1. Initially consider an ordered $m$-tuple $H=(f_1,\dots,f_m)$.
	
2. For each pair $\{p,q\}$, $p\neq q$ in $H$ do first $\mathcal{S}=\overline{S_R(p,q)}^H$ and if $\mathcal{S}\neq 0$, then add $\mathcal{S}$ to $H$. Second compute $\mathcal{S}=\overline{\ann(\lc(f))f}^H$ ($f=p,q$) and in each step if $\mathcal{S}\neq 0$, then add $\mathcal{S}$ to $H$.

3. Repeat 2 until all remainders are zero.
	
This is an algorithm that gives us a Gr\"{o}bner basis for the submodule $U=\langle f_1,\dots,f_m\rangle$.
\end{theorem}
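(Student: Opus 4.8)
The plan is to argue in three stages: first record the Buchberger-type criterion appropriate to the ring $R=\prod_{i=1}^{p}R_i/N_iR_i$, then show that the loop in step~3 must halt, and finally verify that the set $H$ returned is a Gr\"{o}bner basis of $U=\langle f_1,\dots,f_m\rangle$. For the criterion, I would first observe that the very same argument used to prove Theorem~\ref{Grobner-basis-quotient}, now fed by Lemma~\ref{pre-main-lemma1-general} in place of Lemma~\ref{pre-main-lemma1-quotient}, yields the following: a finite family $G=\{h_1,\dots,h_N\}\subseteq F$ is a Gr\"{o}bner basis of $\langle G\rangle$ if and only if $\ann(\lc(h_i))h_i\to_G 0$ for every $i$ and $S_R(h_i,h_j)\to_G 0$ for every $i<j$. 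I would also note that, by Lemma~\ref{division-algorithm}(b), the statement $\overline{g}^{G}=0$ means exactly that the division algorithm writes $g=\sum_i g_ih_i$ with $\LM(g)\ge\lmi(g_i)\LM(h_i)$, i.e.\ it is precisely a witness for $g\to_G 0$; so "all remainders are zero" is literally the condition appearing in the criterion.

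For termination I would use that $R$ is Noetherian: each $R_i$ is a PID hence Noetherian, a quotient of a Noetherian ring is Noetherian, and a finite product of Noetherian rings is Noetherian; therefore $S=R[x_1,\dots,x_n]$ is Noetherian by the Hilbert basis theorem, and $F=\oplus_{i=1}^{r}S$ is a Noetherian $S$-module. Each time step~2 enlarges $H$ by a nonzero element $\mathcal{S}=\overline{X}^{H}$, property~(a) of Lemma~\ref{division-algorithm} applied to the current $H$ says that no term of $\mathcal{S}$, in particular not $\LT(\mathcal{S})$, is generated by the leading terms of the members of $H$. Hence the submodule $\langle\LT(h):h\in H\rangle$ of $F$ strictly increases at every enlargement. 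By the ascending chain condition on submodules of the Noetherian module $F$, only finitely many enlargements can occur, so after finitely many passes step~2 adds nothing new and the algorithm stops.

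For correctness I would first check $H\subseteq U$ by induction on the enlargements: $\{f_1,\dots,f_m\}\subseteq U$, and if every current member of $H$ lies in $U$ then both $S_R(p,q)$, which is an $S$-linear combination of $p$ and $q$, and $\ann(\lc(f))f$ lie in $U$; Lemma~\ref{division-algorithm} expresses their remainder $\mathcal{S}$ as that element minus an $S$-linear combination of members of $H$, hence $\mathcal{S}\in U$. Since moreover $\{f_1,\dots,f_m\}\subseteq H$, we get $\langle H\rangle=U$. When the algorithm halts, every pair $p\neq q$ in $H$ has $\overline{S_R(p,q)}^{H}=0$ and every $f\in H$ has $\overline{\ann(\lc(f))f}^{H}=0$, that is $S_R(p,q)\to_H 0$ and $\ann(\lc(f))f\to_H 0$; by the criterion recorded above, $H$ is a Gr\"{o}bner basis of $U$, which is what the theorem asserts.

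The main obstacle is not a single hard computation but two points requiring care. First, one must make sure the Buchberger criterion genuinely transfers verbatim to $R=\prod_{i=1}^{p}R_i/N_iR_i$; this rests entirely on Lemma~\ref{pre-main-lemma1-general}, after which the remaining bookkeeping (splitting the ``third sum'' as in the proof of Theorem~\ref{Grobner-basis-quotient}, and treating the single-summand cases $\lti(g_i)f_i=a\,\ann(\lc(f_i))f_i$) is routine. Second, termination hinges on the strict growth of the leading-term submodule, which in turn depends on invoking property~(a) of the division algorithm relative to the \emph{current} $H$ at each enlargement; I would state this explicitly to avoid any circularity. Everything else is the standard Buchberger-loop argument.
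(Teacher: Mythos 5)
Your proposal is correct and takes essentially the approach the paper intends: the paper offers no written proof beyond the remark that termination follows from Noetherianity of the finite direct product, and your argument --- the Buchberger criterion for $R=\prod_{i=1}^{p}R_i/N_iR_i$ obtained by running the proof of Theorem~\ref{Grobner-basis-quotient} with Lemma~\ref{pre-main-lemma1-general}, zero remainders read as witnesses of $\to_H 0$ via Lemma~\ref{division-algorithm}(b), and termination by strict growth of $\langle \LT(h):h\in H\rangle$ inside the Noetherian $S$-module $F$ using Lemma~\ref{division-algorithm}(a) --- is exactly the standard argument being invoked. The only substantive thing you add is making explicit the transfer of the criterion to the direct-product setting, which the paper leaves tacit.
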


The algorithm terminates in finitely many steps since the calculation is in a finite direct sum which is a Noetherian ring.

\begin{remark}
By \cite[Lemma 10, Corollary 11]{hungerford1968structure}, every principal ideal ring is a finite direct sum of quotients of PIDs. Then all arguments about finite direct product of quotients of PIDs are valid for a principal ideal ring. Also, if a principal ideal ring is solvable we can state Buchberger's algorithm. 
\end{remark}

\begin{example}
Let $S=\mathbb{Z}/2\mathbb{Z}\times\mathbb{Z}/4\mathbb{Z}\times\mathbb{Z}/8\mathbb{Z}[X,Y]$ and we have lexicographic order. Let 
$$
I=\langle f_1=(0,2,2)X^2+(1,1,0),f_2=(1,2,4)Y+(0,3,0),f_3=(1,0,0)\rangle.
$$
For brevity by $\ann(f_j)$ we mean $\ann(\lc(f_j))f_j$.

1. $H=(f_1,f_2,f_3)$, $f_4=\overline{S_R(f_1,f_2)}^H=(0,1,0)X^2+(0,1,0)XY$, $H=(f_1,f_2,f_3,f_4)$, $f_5=\overline{\ann(f_2)}^H=(0,2,0)$, $H=(f_1,f_2,f_3,f_4,f_5)$, $\overline{\ann(f_1)}^H=0$.

2. $S_R(f_1,f_3)=S_R(f_2,f_3)=\ann(f_3)=0$.

3. $f_6=\overline{S_R(f_1,f_4)}^H=(0,1,0)X$, $H=(f_1,f_2,f_3,f_4,f_5,f_6)$, $\ann(f_4)=0$.

4. $\overline{S_R(f_2,f_4)}^H=0$, $S_R(f_3,f_4)=0$.

5. $\overline{S_R(f_1,f_5)}^H=\overline{S_R(f_4,f_5)}^H=0$, $f_7=\overline{S_R(f_2,f_5)}^H=(0,-1,0)$, $H=(f_1,f_2,f_3,f_4,f_5,f_6,f_7)$, $\ann(f_5)=0$.

Finally we see that all other remainders are zero. We can also remove some of these polynomials and remaining polynomials which are $f_1,f_2,f_3,f_7$ form a Gr\"{o}bner basis.
\end{example}

\begin{example}
Let $S=\mathbb{Z}/2\mathbb{Z}\times\mathbb{Z}/4\mathbb{Z}\times\mathbb{Z}/8\mathbb{Z}[X,Y]$ and $F=S\oplus S$. Suppose $F$ has lexicographic order given priority to the position. Let
$$
U=\langle f_1=(0,2,1)XY^2e_2+(0,1,0)e_2, f_2=(1,2,2)X^2Ye_1+(0,1,4)Xe_2, f_3=(1,0,1)e_2\rangle.
$$
For brevity by $\ann(f_j)$ we mean $\ann(\lc(f_j))f_j$.

1. $H=(f_1,f_2,f_3)$, $S_R(f_1,f_3)=0$, $f_4=\ann(f_1)=(0,2,0)e_2$, $H=(f_1,f_2,f_3,f_4)$.

2. $\overline{\ann(f_2)}^H=0$.

3. $f_5=S_R(f_1,f_4)=(0,1,0)e_2$. $H=(f_1,f_2,f_3,f_4,f_5)$.

Finally we see that all other remainders are zero. We can also see that $f_2,f_3,f_5$ is a Gr\"{o}bner basis.
\end{example}

\begin{example}
Let $R=\mathbb{Z}\times\mathbb{Z}$. Consider the ideal $I=\langle (2,0)x^2y+(1,2),(0,3)xy^2+(1,1)y,(3,4)x\rangle\subset S=R[x,y]$. After calculation we obtain the Gr\"{o}bner basis below for $I$:
$$
f_1=(0,3)xy^2+(1,1)y, f_2=(0,2), f_3=(1,0).
$$
We have $r_{13}=r_{23}=0$, $r_{12}=(0,2)g_1-(0,3)xy^2g_2-yg_2$, $r_{11}=(1,0)g_1-yg_3$, $r_{22}=(1,0)g_2$ and $r_{33}=(0,1)g_3$. Using B\'ezout identity we obtain the Gr\"{o}bner basis below for syzygies:
$$
(1,2)g_1-(0,3)xy^2g_2-yg_2-yg_3, (1,0)g_2, (0,1)g_3.
$$
Then we obtain the following $S$-resolution for $I$:
$$
\dots\to S\oplus S\to S\oplus S\to S\oplus S\oplus S\to S\oplus S\oplus S\to I.
$$
\end{example}
We remark that dimension of $\mathbb{Z}\times\mathbb{Z}$ is 1.

		\begin{bibdiv}
			\begin{biblist}
	   		\bib{adams1994introduction}{book}{
	   	title={An introduction to Gr\"{o}bner bases},
	   	author={Adams, William W.},
	   	author={Loustaunau, Philippe},
	   	volume={3},
	   	year={1994},
	   	publisher={American Mathematical Society}
}
			\bib{buchberger1984critical}{article}{
		title={A critical-pair/completion algorithm for finitely generated ideals in rings},
		author={Buchberger, Bruno},
		journal={Logic and machines: decision problems and complexity},
		volume={171},
		pages={137--161},
		year={1984}
}	
	     \bib{buchberger1965algorithmus}{thesis}{
		title={Ein algorithmus zum auffinden der basiselemente des restklassenringes nach einem nulldimensionalen polynomideal},
		author={Buchberger, Bruno},
		year={1965},
		school={Doctoral Dissertation Math. Inst. University of Innsbruck, Austria}
}
	   \bib{cox2007ideals}{book}{
	 title={Ideals, varieties, and algorithms: an introduction to computational algebraic geometry and commutative algebra},
	 author={Cox, David A},
	 author={Little, John},
	 author={O'Shea, Donal},
	 year={2007},
	 publisher={Springer-Verlag, New York}
}
       \bib{eder2019efficient}{article}{
	title={Efficient Gr\"{o}bner Bases Computation over Principal Ideal Rings},
	author={Eder, Christian},
	author={Hofmann, Tommy},
	journal={arXiv:1906.08543},
	year={2019}
}
       \bib{ene2011gr}{book}{
	title={Gr\"{o}bner bases in commutative algebra},
	author={Ene, Viviana},
	author={Herzog, J{\"u}rgen},
	volume={130},
	year={2011},
	publisher={American Mathematical Society}
}
       \bib{gamanda2019syzygy}{article}{
	title={The syzygy theorem for B\'ezout rings},
	author={Gamanda, Maroua},
	author={Lombardi, Henri},
	author={Neuwirth, Stefan},
	author={Yengui, Ihsen},
	journal={Mathematics of Computation},
	doi={https://doi.org/10.1090/mcom/3466}
	year={2019}
}
 	  \bib{herzog2011monomial}{book}{
	title={Monomial ideals},
	author={Herzog, J{\"u}rgen},
	author={Hibi, Takayuki},
	volume={260},
	year={2011},
	publisher={Springer-Verlag, London}
}
      \bib{hofmann2019gr}{article}{
	title={On Gr\"{o}bner bases over Dedekind domains},
	author={Hofmann, Tommy},
	journal={arXiv:1906.08555},
	year={2019}
}
        \bib{hungerford1968structure}{article}{
	title={On the structure of principal ideal rings},
	author={Hungerford, Thomas},
	journal={Pacific Journal of Mathematics},
	volume={25},
	number={3},
	pages={543--547},
	year={1968}
}
      \bib{jabarnejad2017rees}{article}{
	title={Gr\"{o}bner bases and equations of the multi-Rees algebras},
	author={Jabarnejad, Babak},
	journal={arXiv:1809.09316},
	year={2018}
}
      \bib{kacem2010dynamical}{article}{
	title={Dynamical Gr\"{o}bner bases over Dedekind rings},
	author={Kacem, Amina Hadj},
	author={Yengui, Ihsen},
	journal={Journal of Algebra},
	volume={324},
	number={1},
	pages={12--24},
	year={2010}
}
      \bib{yengui2006dynamical}{article}{
	title={Dynamical Gr\"{o}bner bases},
	author={Yengui, Ihsen},
	journal={Journal of Algebra},
	volume={301},
	number={2},
	pages={447--458},
	year={2006}
}

	\end{biblist}
\end{bibdiv}

\end{document}